\newtheorem{sat}{Theorem}[section]		
\newtheorem{lem}[sat]{Lemma}
\newtheorem*{claim}{Claim}
\newtheorem{kor}[sat]{Corollary}			
\newtheorem{prop}[sat]{Proposition}
\newtheorem*{defi*}{Definition}			
\newtheorem*{bei*}{Example}
\newtheorem*{sat*}{Theorem}				
\newtheorem*{kor*}{Corollary}
\newtheorem*{rmk*}{Remark}				
\newtheorem{quest}{Question}	
\newtheorem*{quest*}{Question}	
\newtheorem{fact}{Fact}	
\let\ssection=\section
\renewcommand{\section}{\setcounter{equation}{0}\ssection}
\newtheorem*{namedtheorem}{\theoremname}
\newcommand{\theoremname}{testing}
\newenvironment{named}[1]{\renewcommand{\theoremname}{#1}\begin{namedtheorem}}{\end{namedtheorem}}
\theoremstyle{remark}
\newtheorem*{bem}{Remark}
\newtheorem{bei}{Example}
\newtheorem*{namedtheoremr}{\theoremnamer}
\newcommand{\theoremnamer}{testing}
\newcommand{\BR}{\mathbb R}			
\newcommand{\BN}{\mathbb N}			
\newcommand{\BS}{\mathbb S}
\newcommand{\BB}{\mathbb B}
		\newcommand{\CB}{\mathcal B}
		\newcommand{\CN}{\mathcal N}
\newcommand{\D}{\partial}
\newcommand{\DD}{\nabla}
\DeclareMathOperator{\vol}{vol}		
\DeclareMathOperator{\inj}{inj}
\DeclareMathOperator{\diam}{diam}
\newcommand{\comment}[1]{}
\DeclareMathOperator{\SO}{SO}
\DeclareMathOperator{\const}{const}
\DeclareMathOperator{\Sing}{Sing}
\DeclareMathOperator{\capa}{cap}
\DeclareMathOperator{\Const}{const}
\newcommand{\norm}[1]{\lVert #1 \rVert}
\newcommand{\dmu}{\;\mathrm{d}\mu}
\newcommand{\QR}{\mathrm{QR}}
\newcommand{\Strata}{\mathrm{Strata}}
\begin{document}

\title[]{Bubbling of quasiregular maps}
\author{Pekka Pankka}
\address{Department of Mathematics and Statistics, P.O. Box 68 (Pietari Kalmin katu 5), FI-00014 University of Helsinki, Finland}
\email{pekka.pankka@helsinki.fi}
\author{Juan Souto}
\address{Univ Rennes, CNRS, IRMAR - UMR 6625, F-35000 Rennes, France}
\email{jsoutoc@gmail.fr}
\date{\today}

\subjclass[2010]{Primary 30C65; Secondary 53A30}
\keywords{Quasiregular mappings, compactness, nodal manifolds, Lichnerowitz's conjecture}
\thanks{P.P. was partially supported by the Academy of Finland project \#297258.}
\thanks{Both authors were partially supported by the Projet international de coop\'eration scietifique PICS 7734.}

\begin{abstract}
We give a version of Gromov's compactess theorem for pseudoholomorphic curves in the case of quasiregular mappings between closed manifolds. More precisely we show that, given $K\ge 1$ and $D\ge 1$, any sequence $(f_n \colon M \to N)$ of $K$-quasiregular mappings of degree $D$ between closed Riemannian $d$-manifolds has a subsequence which converges to a $K$-quasiregular mapping $f\colon X\to N$ of degree $D$ on a nodal $d$-manifold $X$.  
\end{abstract}

\maketitle

\section{Introduction}

A continuous map $f\colon M\to N$ between closed, oriented, $d$-dimensional Riemannian manifolds is $K$-\emph{quasiregular} for some $K\ge 1$ if it belongs to the Sobolev space $W^{1,d}_\mathrm{loc}(M,N)$ and satisfies 
\begin{equation}
\label{eq:K}
\norm{df}^d \le K\cdot \det(df)
\end{equation}
almost everywhere in $M$. A map is {\em quasiregular} if it is $K$-quasiregular for some $K$ and \emph{quasiconformal} if it is a quasiregular homeomorphism. 

For $K\ge 1$ and $D\ge 0$, we denote by $\QR_{K,D}(M,N)$ the space of all $K$-quasiregular maps $M\to N$ of degree exactly $D$, endowed with the topology of uniform convergence. The main result of this paper is a version for $\QR_{K,D}(M,N)$ of Gromov's compactness theorem for pseudoholomorphic curves \cite{Gromov}. Informally, Theorem \ref{main} below asserts the following:

\begin{quote}
{\em Let $M$ and $N$ be closed Riemannian $d$-manifolds and fix $K$ and $D$. Every sequence  $(f_n:M\to N)$ of $K$-quasiregular maps of degree $D$ converges, up to passing to a subsequence, to a $K$-quasiregular map $f:X\to N$, also of degree $D$, on some nodal Riemannian $d$-manifold $X$.}
\end{quote}

In order to be able to state this theorem precisely we need to clarify the used terminology. A (pure) {\em nodal $d$-manifold} is a topological space $X$, which in the complement of a finite set $\Sing(X)$ of singularities, is a $d$-manifold,  each point in $\Sing(X)$ separates $X$ into two components, and the local structure of $X$ near $p\in\Sing(X)$ is modelled on that of two $d$-dimensional planes in general position in $\BR^{2d}$. 

For a nodal manifold $X$, the closure of each component of $X\setminus\Sing(X)$ is called a {\em stratum} of $X$ and each stratum is a manifold in its own right. A \emph{Riemannian nodal manifold} is a nodal manifold, where each stratum is endowed with a Riemannian metric. A continuous map $f:X\to N$ from a nodal $d$-manifold to a Riemannian $d$-manifold is {\em $K$-quasiregular} if its restriction to each stratum is $K$-quasiregular. We say that a sequence of maps $(f_n:M\to N)$ between manifolds {\em converges to a map $f:X\to N$} on some nodal manifold $X$ if there is a sequence ($\pi_n:M\to X$) of pinching maps such that the maps 
\[
f_n\circ\pi_n^{-1}:X\setminus\Sing(X)\to N
\]
converge to $f$. Here a map $M\to X$ is a {\em pinching map} if it is a homeomorphism in the complement of a finite set of disjoint embedded spheres which are being pinched to the singular points in $\Sing(X)$. Note that $X$ has a stratum homeomorphic to $M$. This one is the {\em main stratum} and all the other are the {\em bubbles} of $X$. See Section \ref{sec-nodal} for a more formal discussion of nodal manifolds, pinching maps, (tight) convergence, etc.

Having these notions in place, we may state our main theorem.

\begin{sat}\label{main}
Let $M$ and $N$ be closed and oriented Riemannian $d$-manifolds for $d\ge 2$, and let $K\ge 1$ and $D\ge 1$. Then each sequence $(f_n:M\to N)$ of $K$-quasiregular maps of degree $D$ has a subsequence $(f_{n_k})$, which converges tightly to a $K$-quasiregular map $f:X\to N$ on some smooth pure nodal Riemannian $d$-manifold $X$ satisfying 
\begin{equation}
\label{eq:degree}
D=\sum_{V\in\Strata(X)}\deg(f\vert_V).
\end{equation}
Furthermore, the main stratum of $X$ is diffeomorphic to $M$ and all bubbles are standard spheres $\BS^d$.
\end{sat}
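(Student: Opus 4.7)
The plan is to adapt the standard bubble-tree construction from Gromov's compactness theorem for pseudoholomorphic curves to the quasiregular setting, using Reshetnyak--Rickman theory (normal families, H\"older continuity, removable singularities) in place of elliptic regularity. A key \emph{a priori} bound comes from combining \eqref{eq:K} with the definition of degree:
\[
\int_M \norm{df_n}^d\, dV \le K\int_M \det(df_n)\, dV = K D\, \vol(N).
\]
Define the concentration set $S\subset M$ to be the (necessarily finite) set of points at which more than a threshold $\epsilon_0=\epsilon_0(K,d,N)$ of $d$-energy accumulates in arbitrarily small balls as $n\to\infty$. On $M\setminus S$, a Rickman-type local H\"older estimate gives equicontinuity of $\{f_n\}$, and Arzel\`a--Ascoli together with Reshetnyak's theorem on $W^{1,d}$-limits of $K$-quasiregular maps would produce, after extracting a subsequence, a $K$-quasiregular limit $f_\infty\colon M\setminus S\to N$.

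At each $p\in S$ I would perform a Sacks--Uhlenbeck-style rescaling: choose points $x_n\to p$ and scales $r_n\to 0$ realising the maximal concentration and set $g_n(y)=f_n(\exp_{x_n}(r_n y))$ on balls $B(0,R_n)\subset\BR^d$ with $R_n\to\infty$. The $g_n$ are $K$-quasiregular and, by the choice of scales, nontrivial in the limit; reapplying the no-concentration compactness to $\{g_n\}$ yields, modulo a further finite concentration set $S'\subset\BR^d$, a non-constant $K$-quasiregular map $g\colon\BR^d\setminus S'\to N$. By Rickman's removable singularity theorem and the compactness of $N$, $g$ extends to a non-constant $K$-quasiregular map $\hat g\colon\BS^d\to N$ of positive degree, where $\BS^d$ is the one-point compactification of $\BR^d$; this is precisely why the bubbles come out as standard spheres. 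Iterating the procedure on the points of $S'$, and on subsequent sub-bubbles, produces a finite tree of spherical bubbles.

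Termination of the iteration rests on degree conservation: a local-degree argument across each pinched neck should show that no degree is lost in the limit, so that $\sum_{V\in\Strata(X)}\deg(f|_V)=D$, giving \eqref{eq:degree}. Since each bubble carries a strictly positive integer degree, the tree can only have finitely many nodes. The pinching maps $\pi_n\colon M\to X$ are constructed concretely by choosing, for each first-generation $p\in S$, an embedded sphere $\Sigma_{n,p}\subset M$ of radius intermediate between the bubbling scale $r_n$ and the injectivity radius at $p$, which separates the ``main'' and ``bubble'' regions, and iterating this choice for sub-bubbles; collapsing these disjoint spheres in $M$ produces the nodal manifold $X$ whose main stratum is diffeomorphic to $M$ and whose bubbles are the spheres just produced, with the convergence being tight by construction of the scales.

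The main obstacle I anticipate is the quantitative energy gap that underlies both the finiteness of the concentration sets and the termination of the bubble iteration: one must establish that every non-constant $K$-quasiregular map $\BS^d\to N$ carries at least a fixed amount $\epsilon_0=\epsilon_0(K,N)>0$ of $d$-energy. This is the quasiregular analogue of the energy gap for $J$-holomorphic spheres, and in this setting presumably requires a normal-family/rescaling argument that exploits the fact that non-constant $K$-quasiregular maps cannot have arbitrarily small image, which in turn follows from Rickman--Picard-type results within the Reshetnyak--Rickman theory.
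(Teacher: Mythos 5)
Your overall architecture is the same bubble-tree scheme as the paper, but you have the difficulty backwards, and the step you gloss over is actually the crux. You single out the energy gap (\emph{every non-constant $K$-quasiregular $\BS^d\to N$ carries at least $\epsilon_0(K,N)$ of $d$-energy}) as the anticipated main obstacle and suggest it needs a normal-family or Rickman--Picard argument. In fact it is immediate once one works with the right energy: using $\mu_f = f^*\vol_N$ (rather than $\int\|df\|^d$) the total energy of any quasiregular $\BS^d\to N$ equals $\deg(f)\cdot\vol(N)$ by degree theory, so a non-constant map has energy $\ge\vol(N)$, and $\int\|df\|^d\ge\int\det(df)$ recovers the corresponding bound on $d$-energy with $\epsilon_0 = \vol(N)$, independent of $K$. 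The paper leans on this exact quantization ((QR6)--(QR8)) throughout; it is what drives both the finiteness of the concentration set and the termination of the iteration, and it is why the paper phrases everything in terms of $\mu_f$ rather than $d$-energy.

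The genuinely delicate step is the one you dispatch with \emph{a local-degree argument across each pinched neck should show that no degree is lost}. This is precisely where one must work: one has to show that the energy concentrated at each pole $p$ is an exact integer multiple $d_p\cdot\vol(N)$, so that \eqref{eq:degree} can hold on the nose. The paper does this via a capping-off construction (Lemma \ref{lem-paininthebutt}): find a sphere of small energy in a conformal annulus around the pole (Lemma \ref{lem-neck}), glue in a cone, and read off an integer degree from the resulting map between closed manifolds, up to an error controllable by the neck energy. Your proposal gives no mechanism for excluding loss of degree at the neck. Relatedly, your scale selection via Sacks--Uhlenbeck maximal concentration must be tuned so that the rescaled sequence is \emph{normalized} in the sense that the energy outside a fixed ball is small and no single small ball captures essentially all the energy (conditions (a) and (b) of Proposition \ref{meat}); the paper achieves this via a centre-of-mass construction (Proposition \ref{spread}) rather than maximal-concentration points, and it is this normalization which both guarantees a non-degenerate limit ($|P|\ge 2$ if the bubble limit is constant) and feeds the strict degree drop $d_p<D$ that makes the induction on $D$ terminate. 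Your termination argument by ``strictly positive integer degree on each bubble'' is in the right spirit, but without the quantization at poles and the $d_p<D$ inequality it is not yet a proof.
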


A few comments before discussing some applications of this theorem:
\smallskip

\noindent{\bf 1.} An interesting feature of Theorem \ref{main} is that it is {\em global}. By this we mean that, contrary to what happens in other compactness results for quasiregular maps, we suffer no loss of degree, c.f. \eqref{eq:degree}. With notation as in Theorem \ref{main}, this can be explained as follows. The chosen subsequence $(f_{n_k})$ converges locally uniformly in the complement of the finite set $P=M\cap\Sing(X)$ of singular points contained in the main stratum $M$ of $X$. In the terminology of Pang, Nevo, and Zalcman \cite{Pang-Nevo-Zalcman}, the restriction of $f:X\to N$ to $M$ is the quasinormal limit of the sequence $(f_{n_k})$. When just considering convergence on $M\setminus P$ we encounter loss of degree. This is due to the fact that the convergence of $(f_{n_k})$ at the poles is not locally uniform. We solve this problem by rescaling the neighborhoods of the poles, starting to grow bubbles, i.e. the other strata of $X$. In this way the poles of the sequence turn into singular points on the main stratum of $X$. The strata of $X$, not touching $M$, are formed similarly, that is, by further rescaling already rescaled maps. 
\smallskip

\noindent{\bf 2.}
Compensating the loss of energy, or in our case loss of degree, by rescaling to make bubbles appear is definitively not a new idea. It is central to Gromov's proof of his compactness theorem. And in fact, it also plays a key role in the earlier paper of Sacks--Uhlenbeck \cite{Sacks-Uhlenbeck-1981} guaranteeing the existence of minimal spheres in certain 3-manifolds.
\smallskip

\noindent{\bf 3.}
In Theorem \ref{main} the degree of the maps is fixed. The problem of understanding the asymptotic behaviour of sequences of $K$-quasiregular maps whose degree is tending to $\infty$ arises naturally in dynamics, but not only there (see for example \cite{Bonk-Heinonen,Kangasniemi,Martin-Mayer, Martin-Mayer-Peltonen, Prywes}). This problem is however very different from the one treated in this paper. While Theorem \ref{main} and its consequences are of global nature, the relevant tool to study the renormalization of $K$-quasiregular maps of growing degree is the local, or even infinitesimal, Miniowitz--Zalcman theorem (see Miniowitz \cite{Miniowitz} or Iwaniec--Martin \cite[Theorem 19.9.3]{Iwaniec-Martin-book}). 
\smallskip

\noindent{\bf 4.} As stated, the nodal manifold $X$ in Theorem \ref{main} is not unique. Even if one factors in the fact that it involves choosing subsequences. The point is that we have not introduced any version of the notion of {\em stablity} used for pseudo-holomorphic curves. It would be however easy to do so, and we leave this to the interested reader.
\medskip

We come now to the applications of Theorem \ref{main}.

\subsection*{Conditions ensuring that $\QR_{K,D}(M,N)$ is compact}

In 1964 Lichnerowitz \cite{Lichnerowicz} conjectured that the sphere is the only closed Riemannian manifold which has a non-compact group of conformal automorphisms. A few years later Lelong-Ferrand, incidentally the first woman admitted to the \'Ecole Normale Sup\'erieure, noticed that the quasiconformal world is the proper environment for Lichnerowitz's conjecture. In fact, to prove the conjecture, she proved that if we fix $K\ge 1$ then the following holds: {\em If $M$ and $N$ are closed oriented Riemannian manifolds of dimension $d$, then the space $\QR_{K,1}(M,N)$ of $K$-quasiconformal homeomorphisms between them is compact unless there is a quasiconformal homeomorphism $\BS^d\to N$}. We refer to Lelong-Ferrand's ICM paper \cite{Lelong-Ferrand-ICM} for references and for a general discussion of Lichnerowicz's conjecture. Anyways, as a first application of Theorem \ref{main} we extend Lelong-Ferrand's result from degree $1$ to arbitrary degree. 

\begin{sat}\label{sat-compact}
Let $M$ and $N$ be closed and oriented Riemannian $d$-manifolds for $d\ge 2$. Then the following statements are equivalent:
\begin{enumerate}
\item The space $\QR_{K,D}(M,N)$ is not compact for some $K\ge 1$ and $D\ge 1$.
\item There exists a non-constant quasiregular map $\BS^d\to N$.
\end{enumerate}
Furthermore, if $\QR_{K,D}(M,N)$ is non-compact, then the map in (2) has degree at most $D$. 
\end{sat}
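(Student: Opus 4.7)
The plan is to deduce both implications from Theorem \ref{main}. For $(1) \Rightarrow (2)$, given a sequence $(f_n) \subset \QR_{K,D}(M,N)$ with no uniformly convergent subsequence, Theorem \ref{main} produces a subsequence converging tightly to a $K$-quasiregular $f \colon X \to N$ on a smooth pure nodal Riemannian $d$-manifold $X$ whose main stratum is diffeomorphic to $M$ and whose bubbles are copies of $\BS^d$. The crucial point is to argue that some bubble $V$ must carry a non-constant map: if $X$ had no bubbles, tight convergence would give a uniformly convergent subsequence of $(f_n)$, contradicting non-compactness; while if every bubble carried only a constant map, then (using Reshetnyak's openness-and-discreteness theorem together with tightness) the constant value on each bubble would be forced to match $f|_M$ at the corresponding singular point, allowing the pinchings to be undone and again yielding a uniformly convergent subsequence. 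The resulting non-constant $f|_V \colon \BS^d \to N$ is $K$-quasiregular, proving (2). For the ``Furthermore'' clause, $K$-quasiregular maps are sense-preserving and non-constant ones have strictly positive degree, so \eqref{eq:degree} expresses $D$ as a sum of non-negative integers, forcing $\deg(f|_V) \in \{1,\ldots,D\}$.

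For $(2) \Rightarrow (1)$, given a non-constant $K_0$-quasiregular $g \colon \BS^d \to N$ of degree $d_0$, I would exploit non-compactness of the Möbius group of $\BS^d$: fix a quasiregular map $\pi \colon M \to \BS^d$ of some positive degree $D_1$ (obtainable from an Alexander-type branched-covering construction made quasiregular by a suitable model near the branch set) and Möbius transformations $\phi_n$ of $\BS^d$ that concentrate at a point. Then $f_n := g \circ \phi_n \circ \pi$ lies in $\QR_{K_0 K_\pi, d_0 D_1}(M,N)$ and admits no uniformly convergent subsequence, since any uniform limit would have to be constant (as $\phi_n \circ \pi$ concentrates) and hence of degree $0 \ne d_0 D_1$.

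The principal obstacle is the step in $(1) \Rightarrow (2)$ ruling out nodal limits in which every bubble collapses to a constant: this is the one place where the precise notion of tight convergence (to be introduced in Section~\ref{sec-nodal}) is essential, and it must be combined with Reshetnyak's openness-and-discreteness theorem for quasiregular mappings.
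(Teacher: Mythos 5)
Your proposal follows the same two-pronged strategy as the paper: derive $(1)\Rightarrow(2)$ from Theorem~\ref{main} by locating a non-constant bubble, and prove $(2)\Rightarrow(1)$ by composing with an Alexander branched cover $M\to\BS^d$ and a sequence of M\"obius maps that concentrate. Both halves match the paper's proof.

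The one place you elaborate beyond the paper is the step showing that some bubble carries a non-constant map, and there your proposed mechanism is not quite the right one. Invoking Reshetnyak's openness-and-discreteness and then ``undoing the pinchings'' is shaky: the constant value on a bubble does indeed agree with $f|_M$ at the node (that is Lemma~\ref{lemma:blow-up-limit}), but this does not let you collapse the bubble to recover uniform convergence on $M$, because the pinching maps $\pi_n$ are not controlled in a way that would allow that, and the whole point of a pole is that $(f_n)$ fails to converge uniformly near it regardless of what the bubble map turns out to be. The correct reason is conservation of energy/degree: if $X$ had no bubbles, tight convergence gives a convergent subsequence, contradiction; so there are bubbles, hence poles $p\in P$ with $\mu(\{p\})=d_p\vol(N)$, $d_p\ge 1$. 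Since $\mu_{f|_M}$ is non-atomic, $\deg(f|_M)\,\vol(N)=\lVert\mu_{f|_M}\rVert=D\,\vol(N)-\sum_p d_p\,\vol(N)$, so by \eqref{eq:degree} the bubble degrees sum to $\sum_p d_p>0$; since each is a non-negative integer (quasiregular maps are sense-preserving), some bubble has positive degree and is therefore non-constant. This also directly gives the ``degree at most $D$'' clause. Apart from this one misdirected argument -- which you yourself flag as the delicate point -- the proof is sound and mirrors the paper's.
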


It follows from Theorem \ref{sat-compact} that the topology of $N$ is strictly restricted if $\QR_{K,D}(M,N)$ is non-compact. 

\begin{kor}\label{kor-bubble-rational}
Let $M$ and $N$ be closed and oriented Riemannian $d$-manifolds and suppose that $\QR_{K,D}(M,N)$ is not compact for some $K\ge 1$ and $D\ge 1$. Then $N$ is a rational homology sphere with a finite fundamental group and non-trivial $\pi_d(N)$. If in addition $d\le 4$ then $N$ is covered by $\BS^d$.
\end{kor}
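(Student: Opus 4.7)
My approach is to feed Theorem~\ref{sat-compact} into classical facts about branched coverings and then to invoke dimension-specific classification results. By Theorem~\ref{sat-compact}, non-compactness of $\QR_{K,D}(M,N)$ yields a non-constant quasiregular map $g\colon \BS^d\to N$. By Reshetnyak's theorem, any such $g$ is discrete, open and surjective, hence a branched covering of positive integer degree $e=\deg(g)$. This is the only analytic input; everything else is topology.

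For the rational homology sphere and $\pi_d$ assertions I would proceed as follows. Since $g_*\circ g^*=e\cdot\Id$ on $H^*(N;\BQ)$, the pullback $g^*\colon H^*(N;\BQ)\to H^*(\BS^d;\BQ)$ is injective, and because the intermediate-degree cohomology of $\BS^d$ vanishes, $N$ is a rational homology $d$-sphere. For $\pi_d(N)$, if the class $[g]$ were trivial, then $g$ would factor up to homotopy through a contractible space, forcing $g^*=0$ in degree $d$; this contradicts $\deg(g)=e>0$, so $[g]$ is a non-trivial element of $\pi_d(N)$.

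Finiteness of $\pi_1(N)$ follows from a standard lifting argument: since $d\ge 2$, $\BS^d$ is simply connected, so $g$ lifts to $\tilde g\colon\BS^d\to\tilde N$, where $\tilde N$ is the universal cover. The lift is still non-constant quasiregular (a local property), hence open by Reshetnyak, so $\tilde g(\BS^d)$ is open, compact, and connected in $\tilde N$, hence clopen, hence all of $\tilde N$. Thus $\tilde N$ is compact, i.e.\ $\pi_1(N)$ is finite. Repeating the cohomological argument with $\tilde g$ in place of $g$, the universal cover $\tilde N$ is itself a simply connected closed rational homology $d$-sphere.

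It remains to cover $N$ by a genuine $\BS^d$ when $d\le 4$. For $d=2$ this is the classification of closed orientable surfaces: the only such rational homology $2$-sphere is $\BS^2$. For $d=3$ the simply connected $\tilde N$ is homeomorphic to $\BS^3$ by Perelman's resolution of the Poincar\'e conjecture. For $d=4$, simple connectivity gives $H_1(\tilde N;\BZ)=0$; by Poincar\'e duality and the universal coefficient theorem $H_2(\tilde N;\BZ)$ is torsion-free, and being rationally trivial it must vanish, so $\tilde N$ is a simply connected integral homology $4$-sphere, hence homeomorphic to $\BS^4$ by Freedman's classification. The main subtlety, and the step I would flag as the most delicate, is this last one: the conclusion in dimension four is only topological, not smooth, since Freedman's theorem is the only tool available, and this is the reason the corollary phrases the conclusion as ``covered by $\BS^d$'' rather than diffeomorphic cover.
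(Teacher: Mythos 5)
Your proof is correct and follows essentially the same route as the paper's: Theorem~\ref{sat-compact} produces a non-constant quasiregular map $\BS^d\to N$ of positive degree, the transfer (injection on rational cohomology) gives the rational homology sphere property and a non-trivial class in $\pi_d(N)$, the lift to the universal cover gives finiteness of $\pi_1(N)$, and the low-dimensional classifications (surfaces, Perelman, Freedman) handle $d\le 4$. You supply slightly more detail in two spots---deducing surjectivity of the lift from openness rather than directly from degree, and spelling out via Poincar\'e duality and the universal coefficient theorem why the simply connected rational homology $4$-sphere is in fact an integral homology sphere before invoking Freedman---but these are refinements of the same argument, not a different approach.
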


Returning for a moment to Gromov's compactness theorem, recall that one of its consequences is -- if for example the involved maps are $\pi_1$-injective -- that {\em degenerations of pseudo-holomorphic curves can only occur if the target has bubbles}. Since in our setting both target and domain have the same dimension $d$, we get that the relevant bubbles are also $d$-dimensional. This is reflected in Corollary \ref{kor-bubble-rational} by the non-trivially of $\pi_d(N)$ when $\QR_{K,D}(M,N)$ is not compact.

Continuing with our discussion of Theorem \ref{sat-compact}, note that the following example shows that Corollary \ref{kor-bubble-rational} is essentially optimal. 

\begin{bei}
Given $D\ge 2$, let $N$ be the quotient of $\BS^3$ under the action of the subgroup of $\SO(4)$ generated by $(\zeta,\eta)\mapsto(e^{\frac{2\pi i}D}\zeta,e^{\frac{2\pi i}D}\eta)$. Then $\QR_{K,D}(\BS^3,N)$ is non-compact: the covering map $\BS^3\to N$ is conformal and the group of conformal automorphisms of $\BS^3$ is not compact. Consistently with Corollary \ref{kor-bubble-rational}, the lens space $N$ is a rational homology sphere with finite fundamental group but it is neither an integral homology sphere, nor has trivial fundamental group. 
\end{bei}

At this point the reader could be thinking that this example is not fair because $N$ is a quotient of the sphere. The sphere is indeed the only example known to us of a simply connected manifold which is quasiregularly dominated by the sphere. 

\begin{quest}\label{quest1}
Let $N$ be a closed and simply connected Riemannian $d$-manifold and suppose that there is a non-constant quasiregular map $\BS^d\to N$. Is $N$ homeomorphic to $\BS^d$?
\end{quest}

In the course of the proof of Corollary \ref{kor-bubble-rational} we note that a positive answer to this question follows in low-dimensions from the classification of surfaces, the Poincar\'e conjecture, and Freedman's classification of simply connected 4-manifolds. It would be great to give an answer to Question \ref{quest1} from a quasi-conformal point of view, that is without using deep topological classification results, and we encourage the reader to attempt it. Once this is said, we should also warn him or her that having such a positive answer would in turn give a proof of the 3-dimensional Poincar\'e conjecture. We sketch the argument at the end of Section \ref{sec:bubbling-applications}.

\subsection*{Finiteness of homotopy classes}
Taking the result of Theorem \ref{sat-compact} to another direction, note that the set of all $K$-quasiregular maps $M\to N$ of degree $D$ represents only finitely many homotopy classes unless there is a non-constant quasiregular map $\BS^d\to N$. 
We deduce from our main theorem that finiteness of homotopy classes holds in full generality.

\begin{sat}\label{sat-finite homotopy classes}
Let $M$ and $N$ be closed and oriented Riemannian $d$-manifolds for some $d\ge 2$. Then, for each $K\ge 1$ and $D\ge 1$, there exist only finitely many homotopy classes of maps $M\to N$ of degree $D$, which admit a $K$-quasiregular representative.
\end{sat}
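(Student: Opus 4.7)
The plan is to argue by contradiction using Theorem~\ref{main}. Assume there are infinitely many distinct homotopy classes of maps $M \to N$ of degree $D$ that admit $K$-quasiregular representatives, and select one representative $f_n$ from each. Applying Theorem~\ref{main} to the resulting sequence, we extract a subsequence (still denoted $f_n$) converging tightly to a $K$-quasiregular map $f \colon X \to N$, where $X$ is a smooth pure nodal Riemannian $d$-manifold with main stratum diffeomorphic to $M$, spherical bubbles, and total degree $D$. Let $\pi_n \colon M \to X$ be the associated pinching maps.

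The crucial step is to show that, for all sufficiently large $n$, the map $f_n$ is homotopic to the continuous composition $f \circ \pi_n \colon M \to N$. Granted this, the nodal manifold $X$ is independent of $n$, and the pinching maps $\pi_n$ for large $n$ share a common topological type (a fixed collection of disjoint embedded $(d-1)$-spheres in $M$ arranged in the combinatorial pattern of $\Sing(X)$, each bounding a standard ball); hence the maps $f \circ \pi_n$ all represent one and the same class in $[M,N]$. Consequently cofinitely many $f_n$ lie in that single homotopy class, contradicting our selection of representatives from distinct classes.

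To produce the homotopy $f_n \simeq f \circ \pi_n$, write $P \subset M$ for the finite set of poles (the preimages in the main stratum of the singular points of $X$) and $\{B_{n,p}\}_{p \in P}$ for the corresponding pinched balls. On the complement $M \setminus \bigcup_{p \in P} B_{n,p}$ the sequence $f_n$ converges uniformly to $f \circ \pi_n$, and a short straight-line style homotopy in local charts of $N$ connects them there. Inside each $B_{n,p}$, both $f_n|_{B_{n,p}}$ and $(f \circ \pi_n)|_{B_{n,p}}$ are maps $(B_{n,p}, \partial B_{n,p}) \to (N, U_p)$ for a small neighborhood $U_p$ of $f(p) \in N$, and thus represent classes in $\pi_d(N, f(p))$. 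Tight convergence ensures that, after the rescalings underlying Theorem~\ref{main}, the restriction $f_n|_{B_{n,p}}$ converges to the bubble-tree map of $f$ rooted at $p$, which is precisely the class represented by $(f \circ \pi_n)|_{B_{n,p}}$. Therefore the two restrictions agree in $\pi_d(N)$ and are homotopic relative to the boundary; concatenating these local homotopies with the straight-line homotopy outside yields the global homotopy $f_n \simeq f \circ \pi_n$.

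The main obstacle is this last step: extracting from the precise definition of tight convergence in Section~\ref{sec-nodal} enough control on $f_n$ near the pinched spheres to match the relative $\pi_d(N)$-class at each pole with that of the corresponding bubble tree. Capturing exactly this topological contribution at the poles is the very purpose of introducing tightness alongside Theorem~\ref{main}, so the required matching should either be immediate from the definitions or follow from a short diameter/degree comparison between the rescaled $f_n$ and the limit bubble maps.
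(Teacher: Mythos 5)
Your overall strategy coincides with the paper's: both apply Theorem \ref{main} to a putative sequence of pairwise non-homotopic representatives and then show that eventually $f_n\simeq f\circ\pi_n$, from which the contradiction follows because the pinching maps $\pi_n$ are \emph{by definition} of convergence pairwise homotopic (you do not need the ``shared topological type'' argument --- this is built into the definition in Section~\ref{sec-nodal}). Where you diverge from the paper is in how you propose to establish $f_n\simeq f\circ\pi_n$, and this is precisely the step you flag as a gap and do not fill.

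Your plan is to decompose $M$ into the complement of the pinched balls (straight-line homotopy by uniform convergence) and the pinched balls themselves (comparison of relative $\pi_d(N)$-classes), and you concede that matching the relative classes at each pole ``should either be immediate from the definitions or follow from a short diameter/degree comparison.'' As written this is a genuine gap: you have not produced the comparison, and it is not immediate, since you would need to control the boundary behaviour of $f_n$ on $\partial B_{n,p}$ and make sense of ``bubble-tree class'' when the domain balls are shrinking. The paper's Lemma~\ref{lem-homotopy} avoids this local-class bookkeeping entirely by reversing the order of operations: first, using tightness (small image diameter of $\pi_n^{-1}(U)$ for $U$ near $\Sing(X)$), one homotopes $f_n$ with short tracks to a map $f_n'$ that is constant on each pinched sphere; then $f_n'$ descends to a continuous map $\hat f_n\colon X\to N$ on the \emph{same fixed domain} $X$; and finally one shows $d_N(\hat f_n(x),f(x))<\inj(N)$ for all $x\in X$ (uniform convergence away from $\Sing(X)$ plus tightness near $\Sing(X)$), whence $\hat f_n\simeq f$ and so $f_n\simeq f_n'=\hat f_n\circ\pi_n\simeq f\circ\pi_n$. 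You would strengthen your write-up considerably by replacing the pole-by-pole $\pi_d$ comparison with this descent-to-$X$ argument, which uses tightness exactly where you suspected it should be used but packages the topological comparison globally rather than locally.
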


Theorem \ref{sat-finite homotopy classes} raises the obvious question, reminiscent of Gromov's result \cite[Theorem 2.18]{Gromov-book} on counting homotopy classes of maps with a given Lipschitz constant:

\begin{quest}
Let $M$ and $N$ be closed Riemannian manifolds and fix $D\ge 1$. What is the growth rate of the number of the homotopy classes represented by maps in $\QR_{K,D}(M,N)$ when $K\to\infty$?
\end{quest}

Note that when investigating this problem one might have to deal with the difficulty of determining when is a given map homotopic to a quasiregular map. In dimensions $d=2,3$, this problem can be essentially solved using results of Edmonds \cite{Edmonds-deg3, Edmonds-deg2} whereas the problem seems much more subtle in higher dimensions. It might thus be reasonable to concentrate in the case $D=1$, that is, that all involved quasiregular maps are homeomorphisms. In fact, already in this case, and even in dimension 3, one finds really very interesting cases:

\begin{quest}
Let $M=(\BS^1\times\BS^2)\#(\BS^1\times\BS^2)\#(\BS^1\times\BS^2)$ be the connected sum of three copies of $\BS^1\times\BS^2$. What is the growth rate of the number of the homotopy classes represented by maps in $\QR_{K,1}(M,M)$ when $K\to\infty$?
\end{quest}

Incidentally, recall that Bridson, Hinkkanen, and Martin \cite{Bridson-Hinkkanen-Martin} noted that every non-constant quasiregular self-map of the connected sum of three copies of $\BS^1\times\BS^2$ (or more generally of a manifold with non-elementary torsion free Gromov hyperbolic fundamental group) has degree $1$.

\subsection*{Plan of the paper}

To prove Theorem \ref{main} we will follow the basic idea behind the proof of Gromov's compactness theorem for pseudo-holomorphic curves. What is underpinning the whole strategy is the similarity between the more geometric aspects of the theory of holomorphic functions of one variable and that of quasiregular maps. We recall the relevant facts about quasiregular maps in Section \ref{sec:qr} focusing on analogues of Montel's theorem, and also noting that, in a sense to be made precise below, a certain "energy" (the integral of the Jacobian) associated to a quasiregular map between closed manifolds is "quantized" in the sense that it takes only finitely many values.

In Section \ref{sec:poles} we prove, as the first step towards the proof of Theorem \ref{main}, the quasinormality of $\QR_{K,D}(M,N)$ for fixed $K\ge 1$ and $D\ge 1$. More precisely we prove that, up to passing to a subsequence, any sequence in $\QR_{K,D}(M,N)$ has a subsequence which converges locally uniformly in the complement of a finite collection of "poles", and that whenever a pole appears a definite amount of energy is lost.

We discuss in Section \ref{sec-nodal} what is a nodal manifold and what does it mean for a sequence of maps to converge to a map on a nodal manifold. In the setting of Theorem \ref{main}, the nodes of the relevant nodal manifold will arise at the poles. This is shown in Section \ref{sec:proof-main} where Theorem \ref{main} is also proved. 

We conclude the paper with Section \ref{sec:bubbling-applications}, where we prove Theorem \ref{sat-compact}, Corollary \ref{kor-bubble-rational}, and Theorem \ref{sat-finite homotopy classes}.

\subsection*{Acknowledgements}
The authors thank Bourbon D'Arsel for hospitality and all the coffee.

\section{Preliminaries on quasiregular maps}
\label{sec:qr}
In this section we recall few facts on quasiregular maps, referring the reader to \cite{Rickman-book,Vaisala-book} and \cite{Bojarski-Iwaniec} for details. We also address the reader to V\"ais\"al\"a's survey \cite{VaisalaICM} of classical results in this field. All these sources are concerned with local results, that is, with quasiregular maps between open sets of Euclidean spaces. Global results about quasiregular maps between Riemannian manifolds, such as those mentioned below, follow easily from the local ones when one considers first the problem in charts. We leave details to the reader.

\subsection*{Notation} 
Let $M$ be a Riemannian manifold. We denote by $B^M(x,r)$ and $S^M(x,r)$ the \emph{ball} and \emph{sphere} of radius $r$ around $x$, respectively. However, if $M$ is the Euclidean space $\BR^d$, then we use $\BB(x,r)=B^{\BR^d}(x,r)$ and $\BS(x,r)=S^{\BR^d}(x,r)$ instead. If $\mu$ is a finite measure on a manifold $M$ then we let $\Vert\mu\Vert=\mu(M)$ be the total measure. Recall that all manifolds are assumed to be $d$-dimensional, smooth, and oriented. Finally, when working in manifolds we will often abuse terminology and identify the domains of the charts, the targets of the charts, and the actual charts.

\subsection*{Regularity and equicontinuity}

As already defined in the introduction, a continuous mapping $f\colon M\to N$ between two $d$-dimensional Riemannian manifolds is \emph{quasiregular} if it belongs to the Sobolev space $W^{1,n}_{\mathrm{loc}}(M,N)$ and there exists a constant $K\ge 1$ for which \eqref{eq:K} above is satisfied for almost every $x\in M$. 

By results of Reshetnyak, the regularity properties of quasiregular maps go much beyond a priori assumptions. We record this list of properties as facts and refer the interested reader to \cite[Chapter I]{Rickman-book} for a more complete list. 

The first fact is a combination of Reshetnyak's results:

\begin{itemize}
\item[(QR1)] {\em Quasiregular maps between Riemannian manifolds are either discrete and open or constant. Moreover, they are almost everywhere differentiable and, as long as they are non-constant, they are also absolutely continuous in measure.} 
\end{itemize}

More precisely, the Jacobian $\det df$ of a non-constant quasiregular map $f:M \to N$ is positive almost everywhere and for any $E\subset M$ we have that $\vol_N(fE)=0$ if and only if $\vol_M(E)=0$.

\begin{bem}
Interestingly, the prominent property of quasiregular maps being discrete and open plays a very subtle role in what follows, whereas the analytic condition on being absolutely continuous in measure is the star of the show.
\end{bem}

It follows from (QR1) that the pull-back under $f$ of the volume form $\vol_N$ of $N$ yields a measure 
\[
\mu_f=f^*\vol_N
\]
on $M$ in the Lebesgue class and $\mu_f$ is the measure with derivative
\[
\frac{d\mu_f}{d\vol_M}(x)=\det(df_x) \quad \text{a.e. in }M.
\]
We refer to the measure $\mu_f$ as the {\em energy density} of $f$ and to the total measure $\Vert\mu_f\Vert$ as the {\em total energy} of $f$. Note that, $\norm{\mu_f}=0$ if and only if $f$ is a constant map.

\begin{bem}
It may be bit curious to call $\mu_f$ the energy density of $f$. Especially, since typically in the quasiconformal literature the functional 
\[
f\mapsto \int_M \norm{df}^d d\vol_M 
\]
is called the $d$-energy of $f$. The reason for this terminology stems from the fact that, in our considerations, total energy $\norm{\mu_f}$ admits a version of quantization, which we discuss shortly. Note that, by \eqref{eq:K}, the $d$-energy and $\norm{\mu_f}$ are comparable with a constant depending only on $K$ and $d$.
\end{bem}

Before going further, note that the energy density behaves well when the map is precomposed with an orientation preserving diffeomorphism, and more generally with a quasiconformal map:

\begin{itemize}
\item[(QR2)] {\em We have $\mu_{f\circ\phi}=\phi^{-1}_*\mu_f$ for $f:M\to N$ quasiregular and $\phi:M'\to M$ quasiconformal. }
\end{itemize}

As it is the case for holomorphic maps, local uniform limits of quasiregular maps are quasiregular \cite[Theorem VI.8.6]{Rickman-book}. Since Jacobian determinants of locally uniformly converging quasiregular maps converge weakly \cite[Lemma VI.8.8]{Rickman-book}, we have the following fact.

\begin{itemize}
\item[(QR3)] {\em Let $M$ and $N$ be connected and orientable Riemannian $d$-manifolds. Suppose that a sequence $(f_n:M\to N)$ of $K$-quasiregular maps converges uniformly on compact subsets to some map $f:M\to N$. Then $f$ is also $K$-quasiregular and $\mu_f=\lim_{n\to \infty} \mu_{f_n}$ in the weak-$\ast$ topology, that is, 
\[
\int_M \varphi \dmu_{f_n} \to \int_M \varphi \dmu_f
\]
for all $\varphi \in C_0(M)$ as $n\to \infty$.}
\end{itemize}

Returning to the regularity of quasiregular maps, Meyers and Elcrat proved in \cite{Meyers-Elcrat} that the quasiregularity condition \eqref{eq:K} implies that quasiregular maps belong actually to $W^{1,p}_{\mathrm{loc}}(M,N)$ for some $p=p(d,K)>n$. As such, quasiregular maps $f:M\to N$ are H\"older continuous. In fact, if the domain and target are open subsets of Euclidean space, then the H\"older exponent and the H\"older coefficient are bounded in terms of the energy $\Vert\mu_f\Vert$ of $f$; see Bojarski--Iwaniec \cite[Theorem 5.2]{Bojarski-Iwaniec} for a proof of the next fact.

\begin{itemize}
\item[(QR4)] {\em Given $\Omega\subset\BR^d$ open, $F\subset\Omega$ compact, and $K\ge 1$, there are $C>0$ and $\alpha=\alpha(d,K)>0$ with
\[
\vert f(x)-f(y)\vert\le C\cdot \Vert\mu_f\Vert^{\frac 1d}\cdot\vert x-y\vert^\alpha
\]
for any $K$-quasiregular map $f:\Omega\to\BR^d$ and any $x,y\in F$.}
\end{itemize}

It is worth noting that the statement of (QR4) fails in the global setting. Consider, for example, the non-compact group of conformal automorphisms of the sphere $\BS^d$. However, also in the global setting one finds oneself often working with equicontinuous families of quasiregular maps.

\begin{itemize}
\item[(QR5)] {\em Let $M$ and $N$ be connected and oriented Riemannian $d$-manifolds, and suppose that $N$ is closed. Let $0<c<\vol_N(N)$. Then, for each $K\ge 1$, the family 
\[
\left\{f:M\to N\middle\vert\ K\text{-quasiregular map with }\Vert\mu_f\Vert\le c
\right\}
\]
is equicontinuous.}
\end{itemize}

This version of Montel's theorem is an easy consequence of a theorem of Martio, Rickman, and V\"ais\"al\"a (see \cite[Corollary III.2.7]{Rickman-book}) which states that \emph{a family of quasiregular maps omitting a set of positive capacity is equicontinuous}. We explain how (QR5) follows. First we need some terminology.

If $A\subset N$ is open and $C\subset A$ is compact, then the {\em $p$-capacity} of $(A,C)$ is defined to be
\[
\capa_p(A,C)=\inf_u\int_A\vert\DD u\vert^pd\vol,
\]
where the infimum is taken over all smooth functions with compact support contained in $A$ and which satisfy $u\vert_C\equiv 1$. A compact set $C\subset N$ is said to \emph{have positive capacity} if it has an open neighbourhood $A\ne N$ with $\capa(A,C)>0$. Note that compact sets $C\ne N$ of positive volume on a connected manifold have positive capacity. 

Returning now to the proof of (QR5) suppose that $(f_n:M\to N)$ is a sequence of $K$-quasiregular maps with $\Vert\mu_{f_n}\Vert\le c<\vol(N)$ for all $n\in\BN$, and note that this implies that 
\[
\vol(f_n(M))\le\Vert\mu_{f_n}\Vert\le c<\vol(N)
\]
for all $n$. It follows thus from the Borel-Cantelli lemma that there are a compact set $C\subset N$ with positive volume and a subsequence $(f_{n_i})$ of $(f_n)$ with $f_{n_i}(M)\subset M\setminus C$ for all $i$. Now, any family of $K$-quasiregular maps whose images avoid a set of positive capacity is equicontinuous; see  \cite[Corollary III.2.7]{Rickman-book}. The claim of (QR5) follows.\qed

\subsection*{Maps with small energy}
In the course of this paper we will need to deal a few times with maps with small energy. We start with a simple consequence of (QR4) and (QR5).

\begin{lem}\label{lem-neck}
Let $N$ be a closed and oriented Riemannian $d$-manifold. For all $\epsilon>0$ and $K\ge 1$ there is $C>0$ having the following property: 
Let $f:(-2,2)\times\BS^{d-1}\to N$ be a $K$-quasiregular map satisfying $\Vert\mu_f\Vert<C$. Then
\[
\diam_N\left(f((-1,1)\times\BS^{d-1})\right)<\epsilon
\]
and there exists $r\in (-1,1)$ for which
\[
\int_{\{r\}\times\BS^{d-1}}\Vert df\Vert^{d-1} d\vol_{\BS^{d-1}}<\epsilon.
\]
\end{lem}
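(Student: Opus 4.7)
My plan is to prove the two conclusions of the lemma separately, each by choosing its own constant and then taking $C$ to be the minimum.

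For the diameter estimate in part (1), I would argue by contradiction. Suppose the claim fails for some $\epsilon_0>0$ and $K\ge 1$; then there exists a sequence $f_n\colon (-2,2)\times\BS^{d-1}\to N$ of $K$-quasiregular maps with $\norm{\mu_{f_n}}\to 0$ but $\diam_N(f_n((-1,1)\times\BS^{d-1}))\ge\epsilon_0$. For $n$ large enough we have $\norm{\mu_{f_n}}<\tfrac12\vol_N(N)$, so (QR5) gives that the family $\{f_n\}$ is equicontinuous; since the images lie in the compact manifold $N$, Arzel\`a--Ascoli produces a subsequence converging uniformly on the compact cylinder $[-3/2,3/2]\times\BS^{d-1}$ to some continuous map $f$. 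By (QR3), $f$ is $K$-quasiregular and $\mu_{f_n}\to\mu_f$ weakly-$\ast$; since $\norm{\mu_{f_n}}\to 0$ we get $\mu_f=0$, hence $f$ is constant. But uniform convergence to a constant forces $\diam_N(f_n([-1,1]\times\BS^{d-1}))\to 0$, contradicting our assumption. Thus there is $C_1>0$ giving the diameter bound whenever $\norm{\mu_f}<C_1$.

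For the slice integral in part (2), I would use a direct energy estimate. The quasiregularity condition $\norm{df}^d\le K\det(df)$ together with the definition of $\mu_f$ yields
\[
\int_{(-1,1)\times\BS^{d-1}}\norm{df}^d\,d\vol\le K\norm{\mu_f}.
\]
Applying H\"older's inequality (with exponents $d/(d-1)$ and $d$) to the function $\norm{df}^{d-1}$ on the set $(-1,1)\times\BS^{d-1}$, whose volume is a constant $V_d$ depending only on $d$, gives
\[
\int_{(-1,1)\times\BS^{d-1}}\norm{df}^{d-1}\,d\vol\le V_d^{1/d}\bigl(K\norm{\mu_f}\bigr)^{(d-1)/d}.
\]
Now apply Fubini to the left-hand side: writing $g(r)=\int_{\{r\}\times\BS^{d-1}}\norm{df}^{d-1}\,d\vol_{\BS^{d-1}}$ (well-defined for a.e. $r$ by the $L^d$-integrability of $df$), the above yields $\int_{-1}^{1}g(r)\,dr\le V_d^{1/d}(K\norm{\mu_f})^{(d-1)/d}$, and the mean value theorem for integrals produces some $r\in(-1,1)$ with $g(r)\le \tfrac12 V_d^{1/d}(K\norm{\mu_f})^{(d-1)/d}$. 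Choosing $C_2>0$ small enough in terms of $\epsilon$, $K$ and $d$ so that the right-hand side is less than $\epsilon$ whenever $\norm{\mu_f}<C_2$ finishes part (2).

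Setting $C=\min(C_1,C_2)$ gives the lemma. I do not expect any real obstacle: the main points are simply to note that (QR5) is available because $\norm{\mu_f}$ is much smaller than $\vol_N(N)$, and that the quasiregularity inequality converts the $L^1$ Jacobian bound $\norm{\mu_f}$ into an $L^d$-bound for $df$ against which H\"older can be applied. A minor technical point worth mentioning is that the displayed integral in part (2) is finite for a.e. slice thanks to the Meyers--Elcrat higher integrability, though for our averaging argument one only needs $L^{d-1}$-integrability on the enlarged cylinder, which is immediate from the $L^d$-bound and finite volume.
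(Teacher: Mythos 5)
Your proposal is correct, but the two halves sit differently with respect to the paper's own proof. For the slice integral you follow the paper essentially verbatim: the quasiregularity inequality converts $\Vert\mu_f\Vert$ into an $L^d$ bound on $df$, H\"older and the finite volume of the cylinder give an $L^{d-1}$ bound, and Fubini plus averaging over $r\in(-1,1)$ produce a good slice. For the diameter estimate, however, you take a genuinely different route. The paper argues directly and quantitatively: it covers the cylinder by finitely many $K_0$-quasiconformal images of round balls, uses (QR5) to land each small piece in a $2$-bilipschitz chart, and then chains the H\"older estimate (QR4) along a segment to get an explicit bound $\diam\lesssim\Vert\mu_{f\circ\phi_i}\Vert$. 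You instead argue by contradiction via compactness: take a sequence with $\Vert\mu_{f_n}\Vert\to 0$ failing the diameter bound, invoke (QR5) for equicontinuity once the energies drop below $\tfrac12\vol(N)$, extract a locally uniform limit by Arzel\`a--Ascoli, and use the weak-$\ast$ convergence of Jacobians in (QR3) to conclude $\mu_f=0$, hence $f$ constant, contradicting the assumed diameter lower bound. Both arguments are valid. The paper's version hands you $C$ with an explicit dependence on $\epsilon$ and $K$ (which costs the chaining computation with almost-disjoint balls); your version is shorter and cleaner but is non-constructive, so it yields no control on how $C$ depends on $\epsilon$. Since the paper only ever invokes this lemma in the form ``there exists $C>0$ such that\ldots'', the non-quantitative argument is entirely adequate here. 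One small point worth being explicit about: after Arzel\`a--Ascoli you should pass to the open cylinder $(-\tfrac32,\tfrac32)\times\BS^{d-1}$ before invoking (QR3), since that statement is phrased for maps on a manifold (without boundary) with locally uniform convergence; this costs nothing since $[-1,1]\times\BS^{d-1}$ is a compact subset of it.
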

\begin{proof}
First we note that there are $K_0\ge 1$ and a finite number (two suffices) of $K_0$-quasiconformal embeddings
$$\phi_1,\dots,\phi_k:\BB(0,2)\to (-2,2)\times\BS^{d-1}$$
with 
$$(-2,2)\times\BS^{d-1}=\bigcup_{i=1}^k\phi_i(\BB(0,2))\text{ and }(-1,1)\times\BS^{d-1}\subset\bigcup_{i=1}^k\phi_i(\BB(0,1)).$$
It follows that
\begin{equation}\label{eq-trump is a piece of shit1}
\diam_N(f((-1,1)\times\BS^{d-1}))\le\sum_{i=1}^k\diam_N\left((f\circ\phi_i)(\BB(0,1))\right).
\end{equation}
Since all the maps $f\circ\phi_i$ are $KK_0$-quasiregular with $\Vert\mu_{f\circ\phi_i}\Vert\le\Vert\mu_f\Vert\le C$, we get from (QR4) and (QR5) that 
\begin{equation}\label{eq-trump is a piece of shit2}
\diam_N((f\circ\phi_i)(\BB(0,1))<\frac 1k\epsilon
\end{equation}
holds as long as $C$ is small enough. Indeed, by (QR5), there exists $r_0>0$ for which the image $(f\circ \phi_i)(\BB(x,r_0))$ is contained in a $2$-bilipschitz chart on $N$ for each $x$. Thus, by (QR4), $\diam (f\circ \phi_i)(\BB(x,r_0)) < C'\mu_{f\circ\phi_i}(\BB(x,r_0))^{1/d}r_0^\alpha$ for each $x$, where $C'$ and $\alpha$ do not depend on $x$. Fix now $x\in \BB(0,1)$ and let $B_1,\ldots, B_k$ be a collection of closed balls of radius $r_0$, having pair-wise disjoint interiors, covering the segment $[0,x]$; note that, since balls $B_i$ are almost disjoint, we have $k\le 2/r_0$, say. Then 
\begin{align*}
|f(\phi_i(0)) - f(\phi_i(x))| &\le \sum_{i=1}^k \diam fB_i \le \sum_{i=1}^k C' \mu_{f\circ \phi_i}(B_i)^{1/d} r_0^\alpha \\
&\le C' \left(\sum_{i=1}^k \mu_{f\circ \phi_i}(B_i) \right)^{1/d} \left( \sum_{i=1}^k r_0^{\alpha \frac{d}{d-1}}\right)^{\frac{d-1}{d}} \\
&\le C'' r_0^{\alpha-1+1/d} \norm{\mu_{f\circ \phi_i}} \le C'' \norm{\mu_{f\circ \phi_i}},
\end{align*}
where $C''$ depends on $C'$ and $r_0$. 

The first claim follows immediately from \eqref{eq-trump is a piece of shit1} and \eqref{eq-trump is a piece of shit2}. 

For the second claim, we observe that the quasiregularity condition implies that
$$\int_{(-1,1)\times\BS^{d-1}}\Vert df\Vert^dd\vol\le K\cdot \mu_f((-1,1)\times\BS^{d-1})\le K\cdot C.$$
Setting $U=(-1,1)\times\BS^{d-1}$, we get from H\"older's inequality that
$$\int_U\Vert df\Vert^{d-1}d\vol\le\left(\int_U\Vert df\Vert^dd\vol\right)^{\frac{d-1}d}\cdot\vol(U)^{\frac 1d}\le\Const\cdot\ C^{\frac{d-1}d}.$$
This means that
$$\int_{-1}^{1}\left(\int_{\{r\}\times\BS^{d-1}}\Vert df\Vert^{d-1}d\vol_{\BS^{d-1}}\right)dr\le\const\cdot\ C^{\frac{d-1}d},$$
which implies that 
$$\inf_{r\in(-1,1)}\int_{\{r\}\times\BS^{d-1}}\Vert df\Vert^{d-1}d\vol_{\BS^{d-1}}\le\Const\cdot\ C^{\frac{d-1}d}.$$
The claim follows.
\end{proof}

Lemma \ref{lem-neck} deals only with annuli of some definite size. We prove next that the images under quasiregular maps of arbitrarily large cylinders have small diameter as long as the energy is small enough. 

\begin{prop}\label{lem-neck2}
For every closed and oriented Riemannian $d$-manifold $N$ and every $K\ge 1$ and $\epsilon>0$, there is $\omega>0$ such that, for every $\Lambda>0$, the following holds: If $f:\left(-(\Lambda+1),\Lambda+1\right)\times\BS^{d-1}\to N$ is a $K$-quasiregular map with $\Vert\mu_f\Vert\le\omega$ then $\diam(f([-\Lambda,\Lambda]\times\BS^{d-1}))\le\epsilon$.
\end{prop}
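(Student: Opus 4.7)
The plan is to argue by contradiction, using the conformal log--transformation to pass to the complement of a point in $\BR^d$, and then to rule out a bubble by means of the lower bound on the energy of a non-constant quasiregular map $\BS^d\to N$.

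Suppose the statement fails. Then there exist $\epsilon_0>0$ and sequences $\omega_n\to 0$, $\Lambda_n>0$, and $K$-quasiregular maps $f_n\colon (-(\Lambda_n+1),\Lambda_n+1)\times \BS^{d-1}\to N$ with $\Vert\mu_{f_n}\Vert\le\omega_n$ but $\diam f_n([-\Lambda_n,\Lambda_n]\times\BS^{d-1})>\epsilon_0$. If the $\Lambda_n$ stayed bounded by some $\Lambda_0$ along a subsequence, then $[-\Lambda_0,\Lambda_0]\times\BS^{d-1}$ could be covered by a bounded number of "inner slabs" of Lemma \ref{lem-neck}, each of whose image diameter tends to $0$ as $\omega_n\to 0$; summing these bounds would contradict $\diam>\epsilon_0$. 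Hence we may assume $\Lambda_n\to\infty$.

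Now use the conformal diffeomorphism $\psi\colon \BR\times\BS^{d-1}\to\BR^d\setminus\{0\}$, $(s,\theta)\mapsto e^s\theta$, and consider the $K$-quasiregular maps $\tilde f_n=f_n\circ\psi^{-1}$ on the annuli $A_n=\{e^{-\Lambda_n-1}\le|z|\le e^{\Lambda_n+1}\}$, which exhaust $\BR^d\setminus\{0\}$ and carry total energy $\Vert\mu_{\tilde f_n}\Vert=\Vert\mu_{f_n}\Vert\le\omega_n$. By the equicontinuity property (QR5) and the compactness of $N$, a subsequence $(\tilde f_n)$ converges uniformly on compact subsets of $\BR^d\setminus\{0\}$ to a continuous map $\tilde f_\infty$. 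By (QR3), $\tilde f_\infty$ is $K$-quasiregular and $\Vert\mu_{\tilde f_\infty}\Vert\le\liminf\omega_n=0$, so by (QR1), $\tilde f_\infty\equiv c_0$ for some $c_0\in N$.

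Pick points $p_n,q_n\in[-\Lambda_n,\Lambda_n]\times\BS^{d-1}$ realizing the diameter and let $\tilde p_n,\tilde q_n$ be their images under $\psi$. If both $\tilde p_n$ and $\tilde q_n$ stayed inside a compact subset of $\BR^d\setminus\{0\}$, the uniform convergence would give $f_n(p_n),f_n(q_n)\to c_0$, contradicting $d_N(f_n(p_n),f_n(q_n))>\epsilon_0$. So, after passing to a subsequence, at least one of them, say $\tilde p_n$, satisfies $|\tilde p_n|\to 0$ (the case $|\tilde p_n|\to\infty$ is symmetric). Perform the conformal rescaling $h_n(w)=\tilde f_n(|\tilde p_n|w)$, defined on annuli $B_n$ that again exhaust $\BR^d\setminus\{0\}$; conformal rescaling preserves both the distortion constant and the energy, so the $h_n$ are $K$-quasiregular with energy $\le\omega_n$. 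Another application of (QR5) and Arzelà-Ascoli gives a subsequential limit $h_\infty\colon \BR^d\setminus\{0\}\to N$, $K$-quasiregular with zero total energy, hence constant $c_p\in N$. Since $\tilde p_n/|\tilde p_n|\in\BS^{d-1}$ is a compact set, the uniform convergence of $h_n$ on the unit sphere gives $f_n(p_n)=h_n(\tilde p_n/|\tilde p_n|)\to c_p$.

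It remains to show $c_p=c_0$; applying the same argument to $\tilde q_n$ (either directly or after a further rescaling) will then force $f_n(q_n)$ to converge to $c_0$ as well, giving the contradiction. This is the main point, and it is where we use the global structure: consider any intermediate sequence of radii $r_n\in[|\tilde p_n|,1]$ and the rescaled maps $\tilde f_n(r_n\cdot)$; each yields a constant bubble limit $c(r_n)\in N$ by the same argument. Lemma \ref{lem-neck} bounds the image diameter of $\tilde f_n$ on each radial slab of conformal width one by $C\omega_n^{1/d}\to 0$, so all of the constants $c(r_n)$ obtained from rescalings in a \emph{bounded} range of log-scales must agree with $c_0$. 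The only way to obtain a constant bubble $c_p$ different from $c_0$ in the limit would be to extract a non-constant entire $K$-quasiregular map: an appropriate further rescaling at a "transition" scale, together with the removable singularity theorem for quasiregular maps with bounded image, would produce a non-constant $K$-quasiregular map $\BS^d\to N$, which by Reshetnyak's theorem has degree at least one and hence energy $\ge \vol(N)$. This contradicts $\omega_n\to 0$ and forces $c_p=c_0$. The hardest step is precisely this last one, namely ensuring that the convergence across all scales is "tight enough" to exclude a bubble with positive energy: it is where the interplay between the quantitative conclusions of Lemma \ref{lem-neck} and the rigidity of $K$-quasiregular maps on $\BS^d$ does the real work.
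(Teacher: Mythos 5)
Your proposal takes a genuinely different route from the paper: you argue by contradiction, transfer to $\BR^d\setminus\{0\}$ via the log-conformal map, and try to squeeze the conclusion out of a compactness/rescaling argument in the spirit of Gromov's bubbling. The paper instead gives a direct, self-contained argument: if $f([-\Lambda,\Lambda]\times\BS^{d-1})$ escaped the ball $B^N(f(p),\tfrac{9}{20}\epsilon)$, one would look at $V=[-\Lambda,\lambda]\times\BS^{d-1}$ up to the first exit, use Lemma \ref{lem-neck} to show $f(\partial V)$ has tiny diameter, and then invoke the openness of the (non-constant) quasiregular map $f$ together with the topological fact $\partial(f(V))\subset f(\partial V)$ in a $2$-bilipschitz chart: an open, relatively compact set whose boundary sits in a small ball is itself contained in that ball. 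No subsequences, no rescaling.

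There is a genuine gap in your argument, and it is exactly at the step you flag as the hardest. You need to rule out $c_p\ne c_0$, and you claim that a jump between scales ``would produce a non-constant $K$-quasiregular map $\BS^d\to N$,'' which would carry energy $\ge\vol(N)$ by quantization. But this cannot happen here, for the opposite reason to the one you want: the total energy of $\tilde f_n$ is $\Vert\mu_{\tilde f_n}\Vert\le\omega_n\to 0$, so by (QR3) \emph{every} subsequential limit you extract, at \emph{every} rescaling scale, has energy $0$ and is therefore constant. There is simply no non-constant bubble to be found, and hence no contradiction. Energy quantization tells you the neck cannot concentrate a nontrivial bubble, but it says nothing about whether the constants $c(r)$ can drift across an unbounded number of log-scales. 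Your Lemma-\ref{lem-neck}--based control only applies over \emph{bounded} ranges of log-scale (each unit-width slab has image diameter $O(\omega_n^{1/d})$, but chaining $|\log|\tilde p_n||\to\infty$ such slabs gives no bound), so it cannot by itself prevent a slow drift from $c_0$ to $c_p$ whose image is a thin tube of volume $\le\omega_n$. Indeed, the statement is false for merely continuous or even smooth maps with small energy; what rules this out for quasiregular maps is the openness/discreteness from Reshetnyak's theorem (QR1), which your argument never uses. To close the gap you would have to bring in precisely that topological input, e.g.\ by applying the paper's maximum-principle argument to the open image of the intermediate sub-annulus whose two boundary components map near $c_p$ and near $c_0$. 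The earlier steps of your proposal (reduction to $\Lambda_n\to\infty$, transfer to $\BR^d\setminus\{0\}$, extraction of constant limits at fixed and at $|\tilde p_n|$-scale) are fine.
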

\begin{proof}
Note first that, since $N$ is a closed manifold, we may assume that $\epsilon$ is such that every $10\epsilon$-ball in $N$ is $2$-bilipschitz to the round ball $\BB(0,10\epsilon)$. Now, by Lemma \ref{lem-neck} there exists $C>0$ such that whenever 
\[
f':(-2,2)\times\BS^{d-1}\to N
\]
is a $2K$-quasiregular map satisfying $\Vert\mu_{f'}\Vert<C$, then 
\begin{equation}\label{eq-abc}
\diam_N(f'([-1,1]\times\BS^{d-1}))<\frac 1{40}\epsilon.
\end{equation}

Fix now a point $p\in\{-\Lambda\}\times\BS^{d-1}$. By \eqref{eq-abc}, we have $f(\{-\Lambda\}\times \BS^{d-1}) \subset B^N(f(p),\frac{1}{40}\epsilon)$. We claim that $f([-\Lambda,\Lambda]\times\BS^{d-1})\subset B^N(f(p),\frac 9{20}\epsilon)$. Suppose that this fails to be true. Then we may fix the smallest $\lambda\in[-\Lambda,\Lambda]$ satisfying $f(\{\lambda\}\times\BS^{d-1})\cap S^N(f(p),\frac 9{20}\epsilon)\ne \emptyset$ and consider the compact set
$$V=[-\Lambda,\lambda]\times\BS^{d-1}.$$
From \eqref{eq-abc} we get that the image under $f$ of each one of the boundary components of $V$ has diameter at most $\frac 1{40}\epsilon$.

Let now $\phi:B^N(f(p),10\epsilon)\to\BB(0,10\epsilon)$ be a $2$-bilischitz chart and consider the set $(\phi\circ f)(V)$. Since we might assume that $f$ is not constant, we get that it is open by Reshetnyak's theorem recalled in (QR1). It follows that 
\[
\D ((\phi\circ f)(V))\subset (\phi\circ f)(\D V)
\]
and hence each component $Z$ of $\D((\phi\circ f)(V))$ has diameter
\[
\diam Z\le\sum_{Y\subset\D V}\diam((\phi\circ f)(Y))\le \frac 4{40}\epsilon.
\]
Since this is true for every boundary component of $(\phi\circ f)(V)$, and since $V$ is compact, it follows that $(\phi\circ f)(V)$ itself is contained in a ball of radius $\frac 4{40}\epsilon$ and thus has at most diameter $\frac 8{40}\epsilon$. Using again that $\phi$ is 2-bilipschitz, we get that
\[
\diam(f(V))\le \frac{16}{40}\epsilon.
\]
Since $p\in V$ this implies that $f(V)\subset B^N(f(p),\frac 8{20}\epsilon)$, contradicting the choice of $\lambda$. Having that $f([-\Lambda,\Lambda])\subset B^N(f(p),\frac 9{20}\epsilon)$, we also have that $\diam(f([-\Lambda,\Lambda]))\le\frac 9{10}\epsilon$, and we are done.
\end{proof}

\subsection*{Quantization of energy}

We fear that the reader might think, as we actually do ourselves, that the word {\em quantization} sounds rather presumptuous. This word, however, reflects well one of the features of quasiregular maps that will play a central role in this paper: {\em under suitable conditions, the energy of quasiregular maps can take only a discrete set of values}. This is what we discuss now. 

First consider the case that domain and target are closed manifolds. By the change of variables \cite[Proposition I.4.1.4(c)]{Rickman-book} and the facts that quasiregular maps are both sense-preserving \cite[Theorem I.4.5]{Rickman-book} and local homeomorphisms almost everywhere \cite[Proposition I.4.1.4(b)]{Rickman-book}, we have that: 

\begin{itemize}
\item[(QR6)] {\em If $M$ and $N$ are closed and oriented Riemannian $d$-manifolds, then 
$$\Vert\mu_f\Vert=\int_M \dmu_f=\int_N\vert f^{-1}(z)\vert d\vol_N(z)=\deg(f)\cdot\vol_N(N)$$
for every quasiregular map $f:M\to N$.}
\end{itemize}


In this paper we will indeed be mostly interested in quasiregular maps between closed manifolds, but maps from open subsets of $\BR^d$ to closed manifolds will also arise naturally. If the domain is $\BR^d$ and the map has finite energy then we are basically in the same situation as in the case that the domain is a closed manifold. This is due to the following fact, which follows easily from \cite[Corollary III.2.10]{Rickman-book} together with the fact that sets of positive measure have positive capacity:

\begin{itemize}
\item[(QR7)] {\em Suppose that $N$ is a closed and oriented Riemannian $d$-manifold. If $f:\BR^d\to N$ is a $K$-quasiregular map with finite total energy $\Vert\mu_f\Vert<\infty$, then $f$ extends to a $K$-quasiregular map $\hat f:\BS^d\to N$. Here we are seeing the sphere $\BS^d$, endowed with the standard round metric, as the 1-point compactification of $\BR^n$.}
\end{itemize}

As a consequence of (QR6) and (QR7), we get that finite energy is also quantized when the domain is $\BR^d$:

\begin{itemize}
\item[(QR8)] {\em Let $N$ be a closed and oriented Riemannian $d$-manifold and let $f:\BR^d\to N$ be a $K$-quasiregular map with finite total energy $\Vert\mu_f\Vert<\infty$. Then 
$$\Vert\mu_f\Vert=D\cdot\vol(N)$$
for some $D\in\BN$.}
\end{itemize}

This last statement obviously fails if we replace $\BR^d$ by an open subset thereof. Our next aim is to bound this failure.

\begin{lem}\label{lem-paininthebutt}
Let $N$ be a closed Riemannian $d$-manifold and $K\ge 1$. For each $\epsilon>0$ and $\omega>0$, there is $\Lambda>0$ such that the following holds: If 
\[
f:\BB(0,\Lambda)\to N
\]
is a $K$-quasiregular map with $\mu_f(\BB(0,\Lambda)\setminus\BB(0,1))<\omega$, then 
$$\big\vert\Vert\mu_f\Vert-D\cdot\vol(N)\big\vert\le\omega+\epsilon$$
for some $D\in\BN$.
\end{lem}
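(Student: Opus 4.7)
The plan is to argue by contradiction: suppose there exist $\epsilon, \omega > 0$ and a sequence of $K$-quasiregular maps $f_n : \BB(0, \Lambda_n) \to N$ with $\Lambda_n \to \infty$, $\mu_{f_n}(\BB(0, \Lambda_n) \setminus \BB(0, 1)) < \omega$, and such that $\Vert\mu_{f_n}\Vert$ lies at distance strictly greater than $\omega + \epsilon$ from every integer multiple of $\vol(N)$. One may assume $\omega + \epsilon < \vol(N)/2$, for otherwise the conclusion is automatic by taking $D$ to be the nearest integer to $\Vert\mu_f\Vert/\vol(N)$.

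First I would pass to the logarithmic parameterization $F_n(t,\sigma) = f_n(e^t \sigma)$ on $[0, T_n] \times \BS^{d-1}$ with $T_n = \log \Lambda_n$ and $\mu_{F_n}$-energy at most $\omega$. Since $T_n \to \infty$, a pigeonhole argument applied to a partition of $[0, T_n]$ into sub-intervals of length $4$ yields a sub-interval on which the energy is at most $4\omega/T_n \to 0$. Applying Proposition \ref{lem-neck2} to the corresponding sub-cylinder produces, for each large $n$, a radius $r_n \in (1, \Lambda_n)$ with $f_n(S(0, r_n))$ contained in a ball $B^N(y_n, \delta_n)$ whose diameter satisfies $\delta_n \to 0$. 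Since $N$ is a connected closed manifold and $\delta_n$ is small, $N \setminus \bar B^N(y_n, \delta_n)$ is connected and disjoint from $f_n(S(0, r_n))$, so the counting function $y \mapsto N(y, f_n, \BB(0, r_n))$ is constant on this main region, equal to some $D_n \in \BN$.

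I would then compute
\[
\Vert\mu_{f_n}\Vert - D_n \vol(N) = \int_N \bigl(N(y, f_n, \BB(0, \Lambda_n)) - D_n\bigr)\,d\vol_N(y),
\]
splitting over the main region and $\bar B^N(y_n, \delta_n)$. On the main region the integrand equals $N(y, f_n, \BB(0, \Lambda_n) \setminus \BB(0, r_n))$ and its integral is at most $\mu_{f_n}(\BB(0,\Lambda_n)\setminus\BB(0,r_n)) < \omega$. For the contribution over $\bar B^N(y_n, \delta_n)$, whose volume is $O(\delta_n^d) \to 0$, I would refine the pigeonhole step to locate not just a short sub-interval but a long annular tube (of length $L_n \to \infty$ with $L_n/T_n \to 0$) whose image lies in $B^N(y_n, \delta_n)$; the $\mu_{f_n}$-mass of this tube is then $o(1)$, and combining this with the outer-annulus bound yields that the small-ball integral is $o(1)$ as $n \to \infty$.

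Putting the pieces together, $|\Vert\mu_{f_n}\Vert - D_n \vol(N)| \le \omega + o(1) < \omega + \epsilon$ for all sufficiently large $n$, contradicting the standing assumption. The main technical hurdle is the estimate on the small-ball integral: controlling the preimages of $B^N(y_n, \delta_n)$ that lie in the inner ball $\BB(0, r_n)$ requires careful use of the long-tube structure to prevent pathological accumulation of degree on the bounded components of $N \setminus f_n(S(0, r_n))$, whose total volume is vanishing but on which the counting function could a priori be large.
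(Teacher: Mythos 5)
Your decomposition of $\Vert\mu_{f_n}\Vert - D_n\vol(N)$ into a main-region piece (controlled by the outer annulus, giving the $\omega$) and a small-ball piece is the right way to organize the problem, and it parallels the paper's argument up to the point where a good sphere $\BS(0,r_n)$ is located via a pigeonhole. You are also right that the small-ball piece is where the difficulty lies. But the long-tube fix you propose does not close that gap.

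Concretely, you need to bound $\int_{\bar B^N(y_n,\delta_n)} \bigl(N(y,f_n,\BB(0,r_n)) - D_n\bigr)\,d\vol_N$, and the counting function $N(y,f_n,\BB(0,r_n))$ can jump by arbitrarily many units across the set $f_n(\BS(0,r_n))$ inside the small ball. Finding a long tube $[a_n,b_n]\times\BS^{d-1}$ with image in $B^N(y_n,\delta_n)$ and $\mu_{f_n}$-mass $o(1)$ only tells you that the contribution from that tube is $o(1)$; it tells you nothing about $\int_{\bar B^N(y_n,\delta_n)} N(y,f_n,\BB(0,e^{a_n}))\,d\vol_N$, and since $f_n(\BS(0,e^{a_n}))$ is again contained in the small ball, you have simply reproduced the same problem at the inner radius. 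No amount of shrinking the annulus controls the degree packed into the interior.

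What is actually needed is the \emph{boundary} energy estimate, not just the small-diameter estimate. Lemma \ref{lem-neck} gives a radius $r$ with two properties: $\diam_N(f(\BS(0,r)))<\epsilon$ \emph{and} $\int_{\BS(0,r)}\Vert df\Vert^{d-1}\,d\vol<4^{-d}\epsilon$. Your proposal only invokes the first (via Proposition \ref{lem-neck2}) and never uses the second. The paper's proof uses both: it cones $f|_{\BS(0,r)}$ to a point $x$ via $\phi(tv)=\exp_x(t\exp_x^{-1}(f(v)))$, glues $f|_{\BB(0,r)}$ with $\phi$ to get a map $F\colon\BS^d\to N$ of integral degree $D$, and then
\[
\mu_f(\BB(0,r)) = D\cdot\vol(N) - \int_{\BB(0,r)}\det(d\phi)\,d\vol,
\]
where the error term is bounded by (a constant times) the boundary energy $\int_{\BS(0,r)}\Vert df\Vert^{d-1}$, hence by $\epsilon$. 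In counting-function language this says precisely that $\int_{\bar B^N(x,2\epsilon)}\bigl(N(y,f,\BB(0,r))-D\bigr)\,d\vol_N = -\Vert\mu_\phi\Vert\in[-\epsilon,0]$, which is the estimate your argument is missing. Without some version of this cone-and-glue step (or an equivalent ``degree variation is controlled by boundary $(d-1)$-energy'' lemma), the small-ball integral remains unbounded, and the contradiction you seek never materializes. The surrounding compactness framing and the change-of-variables identity $\Vert\mu_f\Vert=\int_N N(y,f,\BB(0,\Lambda))\,d\vol_N$ are fine, but they are scaffolding around the hole rather than a filling for it.
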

\begin{proof}
Note that we may assume, without loss of generality, that $\epsilon$ is smaller than the injectivity radius of $N$. In fact, so small that for all $x\in N$ the exponential map $\exp_x:T_xN\to N$ is $2$-bilischitz on the set of tangent vectors of length less than $\epsilon$. 

For $k\in \BN$ large, suppose that $\Lambda>2^{k+1}$. Then the restriction of $f$ to one of the standard cylinders $\BB(0,2^{i+1})\setminus\BB(0,2^i)$ has energy less than $\frac\omega k$. Since each one of these cylinders is conformally equivalent to $\BB(0,2)\setminus\BB(0,1)$, it follows from Lemma \ref{lem-neck} that, as long as $\omega/\Lambda$ is small enough, there is $r\in(1,\Lambda)$ such that 
\begin{equation}\label{eq-listening to slade}
\diam_N(f(\BS(0,r)))<\epsilon\ \text{ and }\ \int_{\BS(0,r)}\Vert df\Vert^{d-1}d\vol_{\BS(0,r)}<4^{-d}\cdot\epsilon,
\end{equation}
where the factor $4^{-d}$ has been chosen to offset later the fact that the exponential maps are not isometric but just 2-bilipschitz on the domain we care about. 
 
Now, the first inequality in \eqref{eq-listening to slade} implies that $f(\BS(0,r))\subset B^N(x,\epsilon)$ for some $x\in N$. Consider then the map $\phi:\BB(0,r)\to N$ with 
$$\phi(t\cdot v)=\exp_x(t\cdot\exp_x^{-1}(f(v)))\text{ for }v\in\BS(0,r)\text{ and }t\in[0,1].$$
This map agrees with $f$ on $\BS(0,r)$. We may thus consider the map
\[
F:\BB(0,r)\cup_{\BS(0,r)=\BS(0,r)}\BB(0,r)\to N
\]
which agrees with $f$ on the first ball and with $\phi$ on the second one. Being a map between closed manifolds, $F$ has integral degree $D$. It follows that
$$\int_{\BB(0,r)\cup_{\BS(0,r)=\BS(0,r)}\BB(0,r)}\det(dF) d\vol=D\cdot\vol(N),$$
which means that
$$\mu_f(\BB(0,r))+\int_{\BB(0,r)}\det(d\phi)d\vol=D\cdot\vol(N).$$
Now, taking into account that in the scale we are considering the exponential map is $2$-bilipschitz, we get from the second inequality in \eqref{eq-listening to slade} that
\[
\int_{\BB(0,r)}\det(d\phi)d\vol\le 4^{d-1} \int_{\BS(0,r)}\norm{df}^{d-1} d\vol_{\BS(0,r)}\le \epsilon.
\]
This implies then that $\vert\mu_f(\BB(0,r))-D\cdot\vol(N)\vert<\epsilon$, which thus yields
\begin{align*}
\big\vert\Vert\mu_f\Vert-D\cdot\vol(N)\big\vert
&\le \big\vert\mu_f(\BB(0,r))-D\cdot\vol(N)\big\vert+\mu_f(\BB(0,\Lambda)\setminus\BB(0,r))\\
&\le \epsilon+\omega
\end{align*}
as we wanted to prove.
\end{proof}

\section{Compactness off the poles}
\label{sec:poles}

In \cite{harmonic} we established quasinormality of the family of $K$-quasiregular self-maps of a $d$-sphere of fixed degree, that is, any sequence of such quasiregular maps $\BS^d \to \BS^d$ has a subsequence which converges locally uniformly to a quasiregular map $\BS^d \to \BS^d$ in the complement of a finite set of points, to which we refer as {\em poles}. In this section we revisit this result allowing for general Riemannian manifolds and giving a more precise statement. We begin by giving a precise meaning to the word pole.

\begin{defi*}[Pole]
Let $(f_n:M\to N)$ be a sequence of maps between $d$-dimensional manifolds. A point $p\in M$ is a {\em pole for the sequence $(f_n)$} if there is an open neighborhood $U$ of $p$ in $M$ such that the following hold:
\begin{enumerate}
\item The restriction of $f_n$ to $U\setminus\{p\}$ converges locally uniformly to a map $f:U\setminus\{p\}\to N$.
\item The map $f:U\setminus\{p\}\to N$ extends to a continuous map $U \to N$. 
\item The maps $f_n$ do not converge locally uniformly to $f$ in $U$.
\end{enumerate}
\end{defi*}

In what follows, we use two quantified versions of Proposition 1.2 in \cite{harmonic}, one global and one local. We first state the global one.

\begin{prop}\label{meat2}
Let $M$ and $N$ be closed and oriented Riemannian $d$-manifolds, and fix $K\ge 1$ and $D\in\BN$. Let also 
\[
(f_n:M\to N)
\]
be a sequence of $K$-quasiregular maps of degree $D$. Then, up to passing to a subsequence, there is a finite set $P\subset M$ with the following properties:
\begin{enumerate}
\item The measures $\mu_{f_n}$ converge in the weak-$\ast$ topology to a measure $\mu$ with $\mu(M)=D\cdot\vol(N)$.
\item There is a $K$-quasiregular map $f:M\to N$ with $\mu_f=\mu$ on $M\setminus P$. 
\item The sequence $(f_n)$ converges to $f$ locally uniformly in $M\setminus P$.
\item For each $p\in P$, we have $\mu(p)=d_p\cdot\vol(N)$ for some integer $d_p\ge 1$.
\end{enumerate}
\end{prop}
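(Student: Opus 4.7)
The plan is to combine weak-$\ast$ compactness of the energy measures with the equicontinuity statement (QR5) and the quantization Lemma \ref{lem-paininthebutt}.

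By (QR6), each $\mu_{f_n}$ is a positive Borel measure on $M$ of total mass $D\cdot\vol(N)$, so Banach--Alaoglu yields a subsequence with $\mu_{f_n}\to\mu$ weakly-$\ast$ for some finite Borel measure $\mu$ with $\norm{\mu}=D\cdot\vol(N)$. I would define
\[
P=\{p\in M\colon \mu(\{p\})\ge\vol(N)\},
\]
a set of cardinality at most $D$ by counting mass. For each $q\in M\setminus P$ there is an open ball $B_q\ni q$ whose closure avoids $P$ and satisfies $\mu(\bar B_q)<\vol(N)$; the portmanteau theorem then yields $\mu_{f_n}(\bar B_q)<\vol(N)-\epsilon_q$ eventually for some $\epsilon_q>0$, and (QR5) makes $(f_n\vert_{\bar B_q})$ equicontinuous. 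A diagonal argument along a countable such cover of $M\setminus P$ produces a further subsequence converging locally uniformly on $M\setminus P$ to a continuous map $f\colon M\setminus P\to N$.

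By (QR3), this limit is $K$-quasiregular and $\mu_{f_n}\to\mu_f$ weakly-$\ast$ on every precompact open subset of $M\setminus P$, whence $\mu=\mu_f$ there. Since the image of $f$ lies in the compact manifold $N$, the isolated singularities at points of $P$ are removable for $f$ by the classical removability theorem for bounded quasiregular maps (the cluster set at each pole reduces to a single point), so $f$ extends to a $K$-quasiregular map $f\colon M\to N$; note that $\mu_f$ carries no atoms by (QR1), so the identity $\mu=\mu_f$ only holds on $M\setminus P$.

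For property (4), I would fix $p\in P$ and work in a bilipschitz chart identifying a neighborhood of $p$ with $\BB(0,r_0)\subset\BR^d$ and sending $p$ to $0$. Given $\epsilon,\omega>0$, Lemma \ref{lem-paininthebutt} produces some $\Lambda>0$. Since $\mu_f$ is absolutely continuous by (QR1), we may pick $r\in (0,r_0/\Lambda)$ generic (so $\mu$ does not charge $\BS(0,r\Lambda)$) with $\mu_f(\BB(0,r\Lambda)\setminus\{0\})<\omega/2$; then $\mu_{f_n}(\BB(0,r\Lambda)\setminus\BB(0,r))<\omega$ for all large $n$ by portmanteau. Rescaling $g_n(x)=f_n(rx)$ via (QR2), we obtain a $K'$-quasiregular map $g_n\colon\BB(0,\Lambda)\to N$ (with $K'$ depending only on $K$ and the bilipschitz constant of the chart) whose annular energy is at most $\omega$, so Lemma \ref{lem-paininthebutt} yields $|\norm{\mu_{g_n}}-d_n\cdot\vol(N)|\le\omega+\epsilon$ for some $d_n\in\BN$. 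Since $\norm{\mu_{g_n}}=\mu_{f_n}(\BB(0,r\Lambda))\to\mu(\BB(0,r\Lambda))$ and the latter tends to $\mu(\{p\})$ as $r\to 0$, sending $\epsilon,\omega\to 0$ along appropriate sequences forces $\mu(\{p\})=d_p\cdot\vol(N)$ for some integer $d_p$; the defining inequality $\mu(\{p\})\ge\vol(N)$ then gives $d_p\ge 1$.

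The main obstacle is this quantization step, which requires juggling two scales simultaneously: the chart scale $r$ must shrink so that the annular buffer $\BB(0,r\Lambda)\setminus\BB(0,r)$ isolates the atom at $p$, while the rescaling factor $\Lambda$ must grow to invoke Lemma \ref{lem-paininthebutt}. Conceptually, each atom of $\mu$ arises from energy concentrating into shrinking neighborhoods of a pole, and after rescaling this concentrated energy presents as a quasiregular map from a large Euclidean ball into $N$ whose approximate integer degree is furnished by Lemma \ref{lem-paininthebutt}.
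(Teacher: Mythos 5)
Your proof is correct and, at its core, follows the same strategy as the paper: pass to a subsequence so the energy measures converge weak-$\ast$ to $\mu$, use (QR5) for equicontinuity where no energy concentrates, use (QR3) plus removability of isolated singularities to produce the quasiregular limit $f$, and quantize the atoms of $\mu$ via Lemma \ref{lem-paininthebutt} after rescaling in a chart near each pole. The one genuine difference is how you build $P$. The paper does it by a multiscale covering argument: for each $\ell$ cover $M$ by balls of radius $2^{-\ell}$ with pairwise disjoint half-balls, mark the balls on which $\mu_{f_n}$ exceeds $\frac 12\vol(N)$, stabilize this set along a subsequence, bound its cardinality by $2\cdot 4^d D$, and take a Hausdorff limit as $\ell\to\infty$. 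You instead read $P$ off directly as the atoms of $\mu$ of mass at least $\vol(N)$, with finiteness immediate from $\mu(M)=D\cdot\vol(N)$ and the inequality $d_p\ge 1$ in (4) automatic rather than something to be verified afterwards. Your route is shorter and avoids the bad-ball bookkeeping; the one thing it needs in compensation, namely that $\mu$ has no atoms of mass in $(0,\vol(N))$ hiding outside your $P$, is indeed covered: near any $q\notin P$ your equicontinuity and (QR3) argument identifies $\mu$ with the non-atomic measure $\mu_f$. Both arguments are sound, and yours is arguably the cleaner presentation.
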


\begin{bem}
Note that (2), (3) and (4) imply that $P$ is the set of poles of the sequence $(f_n)$. Note also that (4) states that a definite amount of energy is concentrated at each pole.
\end{bem}

\begin{proof}
The first claim follows just from the weak-$\ast$ compactness properties of the space of measures on $M$ and we may move directly to the proofs of the next two claims.

For each $\ell\in \BN$, let $\CB_\ell$ be a finite covering of $M$ by metric balls of radius $2^{-\ell}$ satisfying $\frac 12B\cap\frac 12B'=\emptyset$ for all distinct $B,B'\in\CB_\ell$. Note that since the balls $\frac 12B$ with $B\in \CB_{\ell}$ are pair-wise disjoint, a point in $M$ belongs to at most $4^d$ balls in $\CB_{\ell}$ as long as $\ell$ is over some threshole. 

Now, for $n\in \BN$, let $\CB_{\ell,n}^b\subset \CB_\ell$ be the subcollection consisting of balls $B\in \CB_\ell$ with $\mu_{f_n}(B)>\frac 12\vol(N)$. By passing to a subsequence of $(\CB_{\ell,n}^b)_n$, we may assume that there is a fixed subset $\CB^b_\ell\subset\CB_\ell$ with $\CB_{\ell,n}^b=\CB_{\ell}^b$ for all $n\in \BN$. The total bound on the energy $\Vert\mu_{f_n}\Vert=D\cdot\vol(N)$, the lower bound on the energy on the balls in $\CB_{\ell}^b$, together with the fact that each point belongs to at most $4^d$ balls, implies the upper bound
\[
\vert\CB^b_\ell\vert\le 2\cdot 4^d\cdot D
\]
on the cardinality of $\CB^b_\ell$ for all $\ell\in \BN$ large enough.

This uniform cardinality bound implies also that, up to passing to a further subsequence, we may assume that, when $\ell\to\infty$, the sets $\bigcup \CB_{\ell}^b$ converge in the Hausdorff topology to a finite set $P \subset M$. 

Let now $U\subset M$ be an open set whose closure $\overline{U}$ does not meet the set $P$. For each $\ell\in \BN$, let $\CB_\ell(U)$ be the collection of balls in $\CB_\ell$ which meet $U$. Since $P$ and $U$ have positive distance, there exists an index $\ell_0\in \BN$ with the property that the union of the balls in $\CB_\ell(U)$ is disjoint of the balls in $\CB_\ell^b$ for all $\ell\ge\ell_0$. Now, for each $B\in \CB_{\ell_0}(U)$, we have that $\mu_{f_n}(B)<\frac 12\vol(N)$ for all $n$ large enough. It thus follows from (QR5) above that the sequence $(f_m|_B)$ is equicontinuous for each $B\in \CB_\ell(U)$. In particular, it has a subsequence which converges uniformly to some $f_B \colon B \to N$. The limiting map $f_B$ is $K$-quasiregular by the first part of (QR3).
Now, a standard diagonal argument yields a subsequence $(f_{m_j})$ tending locally uniformly on $M\setminus P$ to a $K$-quasiregular map $f \colon M\setminus P \to N$. Since $f$ has finite energy, it extends to a $K$-quasiregular map $M \to N$ by removability of isolated singularities (\cite[Theorem III.2.8]{Rickman-book}). The uniform convergence on compacta of $(f_n)$ to $f$ in $M\setminus P$ implies that the measures $\mu_{f_n}$ converge to $\mu_f$ on $M\setminus P$. Thus we have proved both (2) and (3).

To prove (4), let $p\in P$ and let $\epsilon>0$ be small but otherwise arbitrary. Choose also $\delta>0$ with 
\[
\mu_f(B^M(p,\delta))<\epsilon,
\]
where $f:M\to N$
is the limiting map provided by (2). Also, assume that the exponential map 
$$\exp_p:T_pM\to M$$
is $2$-bilipschitz on the set of vectors of length at most $\delta$.

Suppose now that $\Lambda>0$ is very large and note that there exists $n_0\in \BN$ satisfying
\[
\mu_{f_n}\left(B^M(p,\delta)\setminus B^M\left(p,\frac 1\Lambda\delta\right)\right)<\epsilon
\]
for all $n\ge n_0$. Identifying $T_pM$ and $\BR^d$ consider now the map
\[
F_n:\BB(0,\Lambda)\to N,\ \ v\mapsto f_n\left(\exp_p\left(\frac 1\Lambda\delta v\right)\right).
\]
These maps are $2^d\cdot K$-quasiregular, and satisfy 
$$\Vert\mu_{F_n}\Vert=\mu_{f_n}(B^M(p,\delta))\ \text{ and }\ \mu_{F_n}\left(\BB(0,\Lambda)\setminus\BB(0,1)\right)\le\epsilon$$
for all $n\ge n_0$. It follows from Lemma \ref{lem-paininthebutt} that, as long as $\Lambda$ was chosen large enough, there exists, for each $n\in \BN$, $D_n\in\BN$ with 
$$\big\vert\Vert\mu_{F_n}\Vert-D_n\cdot\vol(N)\big\vert\le 2\epsilon$$
for all $n\ge n_0$. It follows that there is some $D\in\BN$ with 
$$\vert\mu(B^M(p,\delta))-D\cdot\vol(N)\vert\le 2\epsilon.$$
Since $\epsilon$ was arbitrary, and since $\delta$ could be replaced by any smaller number, we have
that $\mu(\{p\})=D\cdot\vol(N)$, as we needed to prove.
\end{proof}

We will use a local version of Proposition \ref{meat2}, which applies when the maps $f_n$ in question are defined on larger and larger balls in $\BR^d$. This poses a number of annoying problems. First, we cannot speak about degree. To solve this issue, we switch this topological assumption to an assumption on the total energy of the involved maps. Also, since the maps are defined on non-compact spaces, we have to deal with the lack of compactness of the space of measures. 

Condition (a) in the next proposition deals with this problem, already making the statement more cumbersome. But there is more. We also want to include some condition ensuring that the limit is not too degenerate. This is the reason for condition (b).
After all these warnings, the reader might be ready to confront the mightly unattractive Proposition \ref{meat}.

\begin{prop}\label{meat}
Let $N$ be a closed and oriented Riemannian $d$-manifold, $(\Lambda_n)$ a sequence of positive numbers tending to $\infty$, $K\ge 1$, $C\ge 0$, and $0<\omega<\frac 13\vol(N)$. For each $n\in \BN$, let $f_n:\BB(0,\Lambda_n)\to N$ be a $K$-quasiregular map satisfying
\begin{itemize}
\item[(a)] $\mu_{f_n}(\BB(0,\Lambda_n)\setminus\BB(0,1))\le 2\omega$, and
\item[(b)] $\mu_{f_n}(\BB(z,\frac 14))\le\norm{\mu_{f_n}} - \omega$
for all $z\in\BB(0,\Lambda_n)$.
\end{itemize}
Suppose also that  
\[
\mu_{f_n}(\BB(0,\Lambda_n)) \to C
\]
as $n\to \infty$.

Then, up to passing to a subsequence, there is a finite set $P\subset\overline{\BB(0,1)}\subset\BR^d$ with the following properties:
\begin{enumerate}
\item The measures $\mu_{f_n}$ converge in the weak-$\ast$ topology to a measure $\mu$ satisfying
$$C-2\omega\le\mu(\BR^d)\le C.$$
\item There is a $K$-quasiregular map $f:\BR^d\to N$ with $\mu_f=\mu$ on $\BR^d\setminus P$. 
\item The maps $(f_n)$ converge to $f$ locally uniformly in $\BR^d\setminus P$.
\item For each $p\in P$, we have $\mu(p)=d_p\cdot\vol(N)$ for some integer $d_p\ge 1$.
\item If $f$ is constant, then $\vert P\vert\ge 2$.
\end{enumerate}
\end{prop}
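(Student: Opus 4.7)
The proof follows closely the strategy of Proposition \ref{meat2} but requires modifications to accommodate the non-compact domain and to establish the non-concentration conclusion (5). The essential tools are the equicontinuity fact (QR5), Rickman's removability theorem for isolated singularities, the energy quantization (QR8), and Lemma \ref{lem-paininthebutt}.

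I would begin by passing, through the weak-$*$ compactness of Radon measures on $\BR^d$, to a subsequence with $\mu_{f_n}\to\mu$ weakly-$*$. Hypothesis (a) forces most of the mass to remain in $\BB(0,1)$, giving $C-2\omega\le\mu(\BR^d)\le C$ and thereby (1). I then mimic the bad-ball argument of Proposition \ref{meat2}: for each scale $\ell$, I cover $\BR^d$ by a family $\CB_\ell$ of balls of radius $2^{-\ell}$ whose half-sized counterparts are pairwise disjoint, mark $B\in\CB_\ell$ as \emph{bad} when $\mu_{f_n}(B)>\tfrac{1}{2}\vol(N)$, and use the total mass bound to uniformly bound the cardinality of the bad families independently of $\ell$. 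A diagonal extraction then produces a finite set $P\subset\BR^d$ as the Hausdorff limit of the bad balls.

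On compacta of $\BR^d\setminus P$ the masses $\mu_{f_n}$ are eventually bounded above by $\tfrac{1}{2}\vol(N)$, so (QR5) yields equicontinuity and a further diagonal extraction gives $f_n\to f$ locally uniformly on $\BR^d\setminus P$; by (QR3) the map $f$ is $K$-quasiregular with $\mu_f=\mu$ off $P$. Since the energy of $f$ near each pole is finite, Rickman's removability theorem \cite[Theorem III.2.8]{Rickman-book} extends $f$ to a $K$-quasiregular map $\BR^d\to N$, proving (2) and (3). For (4) I would repeat verbatim the rescaling argument of Proposition \ref{meat2}: centering and rescaling $f_n$ near a pole $p$ by a factor shrinking to zero produces $K$-quasiregular maps on larger and larger balls of $\BR^d$ to which Lemma \ref{lem-paininthebutt} applies, yielding $\mu(\{p\})=d_p\vol(N)$ for some integer $d_p\ge 1$. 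The inclusion $P\subset\overline{\BB(0,1)}$ then follows immediately, since by (a) we have $\mu(\BR^d\setminus\overline{\BB(0,1)})\le 2\omega<\vol(N)$, leaving no room for a pole outside.

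The main obstacle is (5), and I expect it to require the most care. Assume that $f$ is constant and $|P|\le 1$; then $\mu$ is supported on $P$, so $\mu(\BR^d)=\sum_{p\in P}d_p\vol(N)$. Applying (b) at the single pole $p_0$ (or at an arbitrary basepoint in $\BB(0,1)$ if $P=\emptyset$) and combining the upper semicontinuity of weak-$*$ limits on closed balls with (b) gives $\mu(\overline{\BB(p_0,1/4)})\le C-\omega$, so the \emph{escape mass} $\alpha:=C-\mu(\BR^d)$ satisfies $\omega\le\alpha\le 2\omega$; in particular $0<\alpha<\vol(N)$. I would then recover this escape mass as the energy of a bubble at infinity: pick radii $R_n\to\infty$ with $\Lambda_n/R_n\to\infty$ and points $z_n$ in the escape region $\BB(0,\Lambda_n)\setminus\BB(0,R_n)$ on which $f_n$ carries a definite proportion of $\alpha$, rescale $h_n(v):=f_n(z_n+r_nv)$ at a scale $r_n$ adjusted so that a non-concentration hypothesis holds for $(h_n)$, and rerun the arguments (1)--(4) applied to the sequence $(h_n)$ to extract a $K$-quasiregular limit $h\colon\BR^d\to N$ whose total energy absorbs the whole escape mass $\alpha$. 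The quantization (QR8) then forces the total energy of such a limit to lie in $\vol(N)\cdot\BN_{\ge 0}$, contradicting $\alpha\in(0,\vol(N))$. The delicate technical point is choosing $z_n$ and $r_n$ so that the bubble at infinity captures \emph{all} of the escape mass without further leakage; this is a self-similar analysis that ultimately closes because the window $(0,\vol(N))$ containing $\alpha$ is strictly smaller than a single energy quantum.
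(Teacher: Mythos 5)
Your treatment of conclusions (1)--(4) mirrors the paper, which simply refers back to Proposition \ref{meat2} and leaves the adaptation to the reader; your sketch of that adaptation is reasonable. The genuine divergence is in (5). The paper dispatches (5) in two sentences: by (b) and weak-$*$ convergence one gets $\omega\le\mu(\BR^d\setminus\{p\})$ for any $p\in P$; if $f$ is constant then $\mu_f=0$, so if $P=\{p\}$ then (4) would give $\mu(\BR^d\setminus\{p\})=0$, a contradiction. In particular the paper makes no detour through bubbles at infinity. Your alternative route — extract the escape mass $\alpha$, argue that it must itself be a quantum of $\vol(N)$ by building a blow-up at infinity, and derive a contradiction with $\alpha\in(\omega,2\omega)\subset(0,\vol(N))$ — is a genuinely different and much heavier argument. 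You yourself flag it as delicate, and indeed it is not complete as written: the "bubble at infinity'' could again lose mass to still farther scales, so the same leakage problem you are trying to rule out reappears one level up, and your one-line appeal to the width of the interval $(0,\vol(N))$ does not by itself close this self-referential issue. A cleaner way to control the escape, if one wanted to make it rigorous, would be to apply the capping mechanism of Lemma \ref{lem-paininthebutt} directly to the annuli $\BB(0,\Lambda_n)\setminus\BB(0,1)$, rather than to set up a second instance of the normalization machinery.

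Two smaller remarks. First, your appeal to "upper semicontinuity of weak-$*$ limits on closed balls'' to obtain $\mu(\overline{\BB(p_0,1/4)})\le C-\omega$ is not the right portmanteau statement: for compact sets one gets $\mu(F)\ge\limsup_n\mu_n(F)$, which is a lower bound, not an upper bound. The correct mechanism is via the open ball, $\mu(\BB(p_0,1/4))\le\liminf_n\mu_{f_n}(\BB(p_0,1/4))\le C-\omega$; since you are assuming $\mu$ is a point mass at $p_0\in\BB(p_0,1/4)$, this gives what you want. Second, it is worth noting that your worry about escape mass is not unfounded: the paper's one-line derivation of $\omega\le\mu(\BR^d\setminus\{p\})$ really uses that the total mass does not leak to infinity (equivalently $\mu(\BR^d)=C$ rather than just $\ge C-2\omega$), and it is not spelled out in the paper why that is so. In the later application the identity $\mu(\BR^d)=C=D\vol(N)$ does drop out a posteriori from the integer constraint in \eqref{eq0lyon2}, which retroactively justifies the step; but the fact that you identified the issue is a good sign, even if the repair you chose is heavier than needed.
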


The proofs of (1)--(4) in Proposition \ref{meat} are basically identical to the proofs of the corresponding statements in Proposition \ref{meat2} above -- we leave the details to the reader. Claim (5) follows directly from (4) because the maps satisfy condition (b). Indeed, given $p\in P$, we have, by (b) and the weak-$\ast$ convergence, that $\omega \le \mu(\BR^d\setminus \{p\})$. Since $f$ is constant, the measure $\mu_f$ is trivial. Hence $P\ne \{p\}$ by (4). This concludes the discussion of Proposition \ref{meat}.\qed

\subsection*{Renormalisation near a pole}

Conditions (a) and (b) in Proposition \ref{meat} are in some way unnatural, meaning that they will only be satisfied after we modify the sequence in a maybe arbitrary way in an actually occurring situation. In fact, these two conditions are going to arise after we suitably rescale the domain near a pole. The following is the precise statement we will use.

\begin{prop}\label{spread}
Let $N$ be an oriented Riemannian $d$-manifold, $K\ge 1$, $0<\epsilon<\omega<\frac1{10}\vol(N)$, and suppose that also $\epsilon<\frac 18$. Then, for each $K$-quasiregular map $f:\BB(0,1) \to N$ of finite energy satisfying $\Vert\mu_f\Vert\ge\vol(N)$ and 
\begin{equation}
\label{mi gato se llama chichu}
\mu_f(\BB(0,\epsilon))\ge \Vert\mu_f\Vert-\epsilon,
\end{equation}
there exists $r\ge\frac 1{3\epsilon}-1$ and an affine conformal map $\phi:\BB(0,r)\to\BB(0,1)$ for which the $K$-quasiregular map $f\circ\phi:\BB(0,r)\to N$ has the following properties:
\begin{enumerate}
\item $\mu_{f\circ\phi}(\BB(0,r)\setminus\BB(0,1))\le 2\omega$,
\item $\mu_{f\circ\phi}(\BB(z,\frac 14))\le\Vert\mu_f\Vert-\omega$ for all $z\in\BB(0,r)$, and
\item $\Vert \mu_{f\circ\phi}\Vert\ge\Vert\mu_f\Vert-\epsilon$.
\end{enumerate}
\end{prop}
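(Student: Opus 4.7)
The strategy is an iterative rescaling that searches for a concentration pole by zooming in at successively smaller scales $\rho_k = \epsilon \cdot 4^{-k}$ at carefully chosen centers $z_k \in \BB(0,1)$, starting from $z_0 = 0$. At each step $k$, the candidate affine conformal map is $\phi_k(x) = z_k + \rho_k x$ with domain $\BB(0, r_k)$, where $r_k = 1/(3\rho_k) = 4^k/(3\epsilon)$. This automatically satisfies $r_k \ge 1/(3\epsilon) - 1$, and $\phi_k$ sends $\BB(0, r_k)$ onto $\BB(z_k, 1/3)$.

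Throughout the iteration we maintain the invariant $\mu_f(\BB(z_k,\rho_k)) \ge \norm{\mu_f} - \omega$; for $k=0$ this is immediate from hypothesis \eqref{mi gato se llama chichu} together with $\epsilon<\omega$. This invariant says that less than $\omega$ of the total mass lies outside $\BB(z_k,\rho_k)$, so conditions (1) and (3) of the proposition reduce to the two geometric containments $\BB(z_k,1/3) \subset \BB(0,1)$ (needed for $\phi_k$ to have its image in $\BB(0,1)$) and $\BB(z_k,1/3) \supset \BB(0,\epsilon)$ (so that $\BB(z_k,1/3)$ absorbs the original concentration $\mu_f(\BB(0,\epsilon)) \ge \norm{\mu_f}-\epsilon$). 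Both containments are secured by the position bound $|z_k| \le 5\epsilon/4$, which together with $\epsilon<1/8$ gives $|z_k| + 1/3 < 1$ and $|z_k| + \epsilon < 1/3$.

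Suppose now condition (2) fails at step $k$; then there is a point $z_{k+1} \in \BB(z_k,1/3)$ with $\mu_f(\BB(z_{k+1},\rho_k/4)) > \norm{\mu_f} - \omega$. Set $\rho_{k+1} = \rho_k/4$; the invariant persists. To re-verify the position bound, observe that both balls $\BB(z_{k+1},\rho_{k+1})$ and $\BB(0,\epsilon)$ carry mass exceeding $\norm{\mu_f}-\omega > \norm{\mu_f}/2$ (using $\omega < \vol(N)/10 \le \norm{\mu_f}/10$), so they must intersect, yielding $|z_{k+1}| \le \epsilon + \rho_{k+1} \le 5\epsilon/4$. The same mass-intersection reasoning applied to the consecutive balls $\BB(z_k,\rho_k)$ and $\BB(z_{k+1},\rho_{k+1})$ gives $|z_{k+1}-z_k| \le \rho_k + \rho_{k+1}$, so $(z_k)$ is Cauchy with some limit $z_\infty$, and the balls $\BB(z_k,\rho_k)$ collapse to $\{z_\infty\}$.

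The iteration must terminate: by (QR1) the energy density $\mu_f$ lies in the Lebesgue class, so $\mu_f(\{z_\infty\}) = 0$; but the invariant forces $\mu_f(\BB(z_k,\rho_k)) > \norm{\mu_f} - \omega \ge 9\vol(N)/10 > 0$, a contradiction. Hence at some finite index $k^\ast$ condition (2) holds for the candidate $(\phi_{k^\ast}, r_{k^\ast})$, and then (1)--(3) all hold, completing the proof. The only subtle bookkeeping point is maintaining the uniform bound $|z_k| \le 5\epsilon/4$ which simultaneously fits the rescaled ball inside $\BB(0,1)$ and forces it to engulf the original concentration $\BB(0,\epsilon)$; beyond this, the argument relies only on mass comparison and the absolute continuity from (QR1).
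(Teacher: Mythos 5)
Your proof is correct, and it takes a genuinely different route from the paper. The paper defines the affine map via the \emph{centre of mass} of $\mu_f$ (a fixed anchor point, controlled by Lemma~\ref{vittula}) and then finds the right scale by a one-parameter continuity argument in the dilation factor $\lambda$, with condition (2) verified by contradiction via the gravity principle. You instead run an iterative zoom: at each step the affine map is $\phi_k(x)=z_k+\rho_k x$, and if condition (2) fails at scale $\rho_k$ there must be a sub-ball $\BB(z_{k+1},\rho_k/4)$ carrying mass $>\Vert\mu_f\Vert-\omega$, to which you re-center and rescale. Termination is forced by the non-atomicity of $\mu_f$ from (QR1): an infinite descent would pin more than $\frac 9{10}\vol(N)$ of mass onto the single point $z_\infty$. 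The bookkeeping is clean; in particular, the position bound $\vert z_k\vert\le\frac 54\epsilon$ is always re-established from the intersection of the concentration ball with $\BB(0,\epsilon)$, which simultaneously gives $\BB(z_k,\frac 13)\subset\BB(0,1)$ and $\BB(0,\epsilon)\subset\BB(z_k,\frac 13)$, yielding (1) and (3). Your approach is somewhat more elementary (no need to introduce and control the centre of mass and the strict convexity of $\Phi_\mu$) and produces the stronger bound $\mu_{f\circ\phi}(\BB(0,r)\setminus\BB(0,1))\le\omega$; the paper's approach is more uniform in that it selects $\lambda_f$ so that the outer mass is \emph{exactly} $2\omega$, which is what makes the continuity argument for $\lambda$ go through in one shot rather than via an induction/termination argument.
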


Before launching the proof of Proposition \ref{spread}, we recall briefly what is the {\em centre of mass} of a non-atomic measure $\mu$ on $\BR^d$ with finite first moment $\int_{\BR^d}\vert x\vert d\mu(x)<\infty$ and whose support is not contained in a line. Consider the function
\[
\Phi_\mu:\BR^d\to\BR,\ x\mapsto\int_{\BR^d}\vert x-z\vert d\mu(z).
\]
The properties of $\mu$ translate to properties of $\Phi_\mu$ as follows. The finiteness of the first moment of $\mu$ implies that the function $\Phi_\mu$ is well-defined. Since $\mu$ has no atoms, $\Phi_\mu$ is smooth. Since the support of $\mu$ is not contained in a line, $\Phi_\mu$ is strictly convex. Finally, since $\Phi_\mu$ is proper, it has a unique minimum $x_0$, the {\em centre of mass} of $\mu$. We may characterize $x_0$ also as the unique point where the gradient of $\Phi_\mu$ vanishes, that is, $x_0$ is the centre of mass if and only if
\begin{equation}
\label{eq:center-2}
\int_{\BR^d}\frac {x_0-z}{\vert x_0-z\vert}d\mu(z)=0.
\end{equation}
The centre of mass satisfies what one could call the gravity principle: {\em if most of the measure is in northern Sweden, then the centre of mass is in a Nordic country}. 

\begin{lem}\label{vittula}
Suppose that $\mu$ is a non-atomic measure on $\BR^d$ with finite first moment and whose support is not contained in a line. If $\mu(\BB(0,1))>\frac 34\mu(\BR^d)$, then the centre of mass of $\mu$ is contained in $\BB(0,2)$.
\end{lem}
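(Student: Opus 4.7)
The plan is to argue by contradiction, using the gradient characterization \eqref{eq:center-2} of the centre of mass. Assume that $|x_0|\ge 2$. The idea is that, under this assumption, every unit vector of the form $\frac{x_0-z}{|x_0-z|}$ with $z\in\BB(0,1)$ points definitely in the direction of $x_0$, so the integral \eqref{eq:center-2} cannot vanish once $\mu$ is sufficiently concentrated on $\BB(0,1)$. Concretely, I would take the inner product of \eqref{eq:center-2} with the unit vector $x_0/|x_0|$ and seek a positive lower bound for the resulting real-valued integral.

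The first key step is the pointwise estimate: for $z\in\BB(0,1)$ and $|x_0|\ge 2$, using $|\langle z,x_0\rangle|\le |x_0|$ and $|x_0-z|\le |x_0|+1$ one gets
\[
\left\langle \frac{x_0-z}{|x_0-z|},\,\frac{x_0}{|x_0|}\right\rangle \;=\;\frac{|x_0|^2-\langle z,x_0\rangle}{|x_0-z|\cdot|x_0|}\;\ge\;\frac{|x_0|-1}{|x_0|+1}\;\ge\;\frac13,
\]
while for any $z\in\BR^d$ the same inner product is trivially bounded below by $-1$. Note that $|x_0-z|\ge 1>0$ on $\BB(0,1)$ and $\mu$ has no atoms, so the integrand is defined $\mu$-a.e.

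The second step is to integrate: projecting \eqref{eq:center-2} onto $x_0/|x_0|$ and using the hypothesis $\mu(\BB(0,1))>\tfrac34\mu(\BR^d)$,
\[
0\;=\;\int_{\BR^d}\left\langle \frac{x_0-z}{|x_0-z|},\,\frac{x_0}{|x_0|}\right\rangle d\mu(z)\;\ge\;\tfrac13\mu(\BB(0,1))-\mu(\BR^d\setminus\BB(0,1))\;=\;\tfrac43\mu(\BB(0,1))-\mu(\BR^d),
\]
and the last quantity is strictly greater than $\tfrac43\cdot\tfrac34\mu(\BR^d)-\mu(\BR^d)=0$. This contradiction forces $|x_0|<2$, proving the claim.

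There is no real obstacle: the whole argument rests on choosing the right test direction (namely $x_0/|x_0|$) and checking that the constant $\tfrac13$ coming from the geometric estimate is exactly matched by the mass ratio $\tfrac34$ in the hypothesis. The only mild subtlety is confirming that the integrand is well defined $\mu$-a.e., which follows from $\mu$ being non-atomic together with $|x_0-z|\ge 1$ on $\BB(0,1)$.
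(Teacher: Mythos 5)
Your proof is correct and follows essentially the same route as the paper's: both project the gradient condition \eqref{eq:center-2} onto the direction of $x_0$ (the paper onto $x_0$, you onto the unit vector $x_0/|x_0|$), use the same pointwise lower bound $\frac{|x_0|-1}{|x_0|+1}\ge\frac13$ on $\BB(0,1)$ together with the trivial bound $-1$ elsewhere, and conclude positivity from the mass ratio $\tfrac34$.
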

\begin{proof}
It suffices to show that $x_0\in\BR^d$ with $\vert x_0\vert\ge 2$ is not the centre of mass of $\mu$. In fact, for any such $x_0$ we have 
$$\left\langle x_0,x_0-z\right\rangle\ge\vert x_0\vert\cdot(\vert x_0\vert-1)\ \text{ and }\ \vert x_0-z\vert\le\vert x_0\vert +1$$
for every $z\in\BB(0,1)$. Thus 
\begin{equation}
\label{eq:angle}
\begin{split}
\left\langle x_0,\int_{\BR^d}\frac{x_0-z}{\vert x_0-z\vert}d\mu(z)\right\rangle&\ge \frac {\vert x_0\vert\cdot(\vert x_0\vert -1)}{\vert x_0\vert+1}\cdot\mu(\BB(0,1))-\vert x_0\vert\cdot\mu(\BR^d\setminus\BB(0,1))\\
&\ge\vert x_0\vert\cdot\left(\frac 13\cdot\mu(\BB(0,1))-\mu(\BR^d\setminus\BB(0,1))\right).
\end{split}
\end{equation}
The condition $\mu(\BB(0,1))>\frac 34\mu(\BR^d)$ implies that $\mu(\BR^d\setminus\BB(0,1))\le \frac 14\mu(\BR^d)$ and thus the quantity on the right in \eqref{eq:angle} is positive. In particular, this shows that 
\[
\int_{\BR^d}\frac {x_0-z}{\vert x_0-z\vert}d\mu(z)\neq 0,
\]
which means that $x_0$ is not be the centre of mass of $\mu$.
\end{proof}

We are ready to prove Proposition \ref{spread}. 

\begin{proof}[Proof of Proposition \ref{spread}]
Let $x_0\in \BR^d$ be the center of mass of $\mu_f$, and note that $\vert x_0\vert<2\epsilon$ by Lemma \ref{vittula}. Given $\lambda\ge 1$ consider the affine scaling map
\[
T^\lambda_{x_0}: \BR^d \to \BR^d,\ \ x\mapsto \lambda (x-x_0) + x_0,
\]
and the function
\[
\Psi_f: [1,\infty) \to [0,\norm{\mu_f}),\ \ \lambda\mapsto \left((T^\lambda_{x_0})_*\mu_f\right)(T^\lambda_{x_0}(\BB(0,1))\setminus\BB(0,1)).
\]
Since $\mu_f$ has no atoms, $\Psi_f$ is continuous. Since $\Psi_f(1) = 0$ and $\Psi_f(\lambda) \to \norm{\mu_f}$ as $\lambda \to \infty$,  there exists $\lambda_f\ge 1$ for which
\begin{equation}
\label{eq:match}
\Psi_f(\lambda_f) = \left((T^{\lambda_f}_{x_0})_*\mu_f\right)\left(T^{\lambda_f}_{x_0}(\BB(0,1))\setminus\BB(0,1)\right) = 2\omega.
\end{equation}
To satisfy \eqref{eq:match}, the image of $\BB(0,\epsilon)$ under $T^{\lambda_f}_{x_0}$ needs to meet the complement of $\BB(0,1)$. Thus $\lambda_f \ge \frac{1}{3\epsilon}$.


Having fixed $\lambda_f$, we define
\[
\phi = (T^{\lambda_f}_{x_0})^{-1}:\BB(0,r)\to \BB(0,1),\ \ x \mapsto \frac{1}\lambda_f(x-x_0)+x_0,
\]
where 
\[
r=\lambda_f(1-2\epsilon)\ge\frac 1{3\epsilon}-1.
\]
Note also, keeping in mind that $\vert x_0\vert<2\epsilon$, that 
\begin{equation}\label{eq-inclusion1}
\BB(0,r)=\BB(0,\lambda_f(1-2\epsilon)) \subset \BB((1-\lambda_f)x_0, \lambda_f) = T^{\lambda_f}_{x_0}\BB(0,1)
\end{equation}
and
\begin{equation}\label{eq-inclusion2}
T^{\lambda_f}_{x_0}\BB(0,1/2) = \BB((1-\lambda_f)x_0, \lambda_f/2) \subset 
\BB(0,r).
\end{equation}

We are now ready to show that this map $\phi$ has the required properties. First we observe that, since $\phi$ is conformal, $f\circ \phi: \BB(0,r)\to N$ is $K$-quasiregular and satisfies $\mu_{f\circ \phi} = (T^{\lambda_f}_{x_0})_*\mu_f$ by (QR2). By \eqref{eq-inclusion1}, we also have 
\begin{align*}
\mu_{f\circ \phi}(\BB(0,r)\setminus \BB(0,1)) & = (T^{\lambda_f}_{x_0})_*\mu_f(\BB(0,r)\setminus \BB(0,1)) \\
&\le (T^{\lambda_f}_{x_0})_* \mu_f(T^{\lambda_f}_{x_0}(\BB(0,1))\setminus \BB(0,1)) \\
&=\Psi_f(\lambda_f)= 2\omega.
\end{align*}
Thus (1) holds.

By-passing (2) for a moment, let us prove (3). To do so observe that from \eqref{eq-inclusion2} we get
\begin{align*}
\norm{\mu_{f\circ \phi}} &= (T^{\lambda_f}_{x_0})_*\mu_f(\BB(0,r)) = \mu_f((T^{\lambda_f}_{x_0})^{-1}(\BB(0,r)) \\
&\ge \mu_f(\BB(0,1/2)) \ge \mu_f(\BB(0,\epsilon)) \ge\norm{\mu_f}-\epsilon.
\end{align*}
Condition (3) follows. 

We prove (2) by contradiction. In the light of (3), this means that there is $z$ with 
$$\mu_{f\circ\phi}(\BB(z,\frac 14))\ge\Vert\mu_{f\circ\phi}\Vert-\omega\ge \norm{\mu_f}-\epsilon-\omega>\Vert\mu_f\Vert-2\omega.$$
It follows from (1) that $\BB(z,\frac 14)$ has to meet $\BB(0,1)$. But since $\mu_f(\BB(z,\frac 14))\ge\mu_{f\circ\phi}(\BB(z,\frac 14))\ge\frac 34\Vert\mu_f\Vert$, we get from Lemma \ref{vittula} that the centre of mass $x_0$ of $\mu$ belongs to the ring $\{z\in\BR^d \colon  \frac {3}2\ge\vert x\vert\ge\frac 12\}$. This is a contradiction because $\vert x_0\vert< 2\epsilon<\frac 14$. We have proved (2), and thus Proposition \ref{spread}.
\end{proof}

\section{Nodal manifolds}\label{sec-nodal}

Given two topological spaces $X$ and $Y$ and two points $x\in X$ and $y\in Y$, let $X*_{x=y}Y$ be the topological space obtained by identifying the points $x$ and $y$. Now, given $d\ge 2$, let $\CN^d$ be the smallest collection of topological spaces containing all $d$-dimensional manifolds and which is closed under the following operation: 
\begin{itemize}
\item [] For $X,Y\in\CN^d$ and $x\in X$ and $y\in Y$ points, $X*_{x=y}Y\in\CN^d$. 
\end{itemize}
We refer to the elements of $\CN^d$ as {\em topological nodal $d$-manifolds}. 

Let $X$ be a topological nodel $d$-manifold. A point $x\in X$ is {\em regular} if $X\setminus \{x\}$ is connected and it is {\em singular} otherwise; equivalently, $x\in X$ is a regular iff it is a manifold point of $X$. A nodal manifold $X$ is {\em pure} if $X\setminus \{x\}$ has two connected components for every singular point $x$. Proposition \ref{prop-characterise nodal mf}, whose proof we leave to the reader, gives an intrinsic characterisation of pure nodal manifolds.

\begin{prop}\label{prop-characterise nodal mf}
Let $d\ge 2$. A connected, Hausdorff, and paracompact topological space $Z$ is a pure topological nodal $d$-manifold if and only if it contains a finite set $P\subset Z$ satisfying the following properties:
\begin{itemize}
\item Each point $p\in P$ has a neighborhood $U\subset Z$ such that the pair $(U,p)$ is homeomorphic to $(\BR^d\times\{0\}\cup\{0\}\times\BR^d,(0,0))\subset(\BR^d\times\BR^d,(0,0))$.
\item $Z\setminus P$ is an $d$-dimensional manifold with precisely $\vert P\vert+1$ connected components.\qed
\end{itemize}
\end{prop}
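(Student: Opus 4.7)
The plan is to prove both implications.

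For the forward direction, I would argue by induction on the construction of $Z$ in $\CN^d$. If $Z$ is a manifold, take $P=\emptyset$. Otherwise $Z=X*_{x=y}Y$ with $X,Y\in\CN^d$ strictly smaller; let $p$ denote the identified point. Note that $Z\setminus\{p\}=(X\setminus\{x\})\sqcup(Y\setminus\{y\})$, and that both $X$ and $Y$ are themselves connected (every element of $\CN^d$ is, by an easy induction, since connected sum at a point preserves connectedness). So purity of $Z$ at $p$ forces both $X\setminus\{x\}$ and $Y\setminus\{y\}$ to be connected, hence $x$ and $y$ are manifold points of $X$ and $Y$ respectively. A parallel analysis shows that for any singular point $q\in X$, attaching $Y$ at the manifold point $x$ does not alter the number of components of $X\setminus\{q\}$, so $Z\setminus\{q\}$ and $X\setminus\{q\}$ have equally many components; purity of $Z$ then descends to purity of $X$, and symmetrically of $Y$. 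By induction $X$ and $Y$ admit finite sets $P_X,P_Y$ satisfying (a) and (b), and setting $P=P_X\sqcup P_Y\sqcup\{p\}$, property (a) at $p$ follows from $x,y$ being manifold points (so a neighborhood of $p$ is a $d$-ball in $X$ wedged at $p$ with a $d$-ball in $Y$), while the count $(|P_X|+1)+(|P_Y|+1)=|P|+1$ of components of $Z\setminus P$ uses that removing a single interior point from an open connected $d$-manifold leaves it connected when $d\ge 2$.

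For the converse, I would associate a combinatorial dual graph $T$ to $(Z,P)$: its vertices are the $|P|+1$ components $C_1,\ldots,C_{|P|+1}$ of $Z\setminus P$, and for each $p\in P$ the two local branches of the wedge model at $p$ are connected punctured $d$-balls disjoint from $P$, so each lies in a unique $C_i$, yielding an edge between the two (a priori possibly equal) vertices containing them. Connectedness of $Z$ makes $T$ connected, and since $T$ has $|P|+1$ vertices and $|P|$ edges it must be a tree, in particular with no self-loops. Consequently each singular point meets two \emph{distinct} strata, so each stratum closure $\bar C_i$ is a $d$-manifold: at each $p\in \bar C_i\setminus C_i$ only one of the two local branches of the wedge belongs to $\bar C_i$, making it locally a $d$-ball. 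I would then realise $Z\in\CN^d$ by a second induction on $|P|$: a leaf of $T$ gives a stratum which is a manifold meeting the rest of $Z$ at a single node, so one can detach it via the $*_{x=y}$ operation and conclude by induction. Purity of $Z$ then follows from the tree property, since cutting $T$ at any edge $p$ yields two subtrees whose associated unions of strata and remaining singular points give exactly the two connected components of $Z\setminus\{p\}$.

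The main technical step is ruling out self-loops in the dual graph, which rests on the exact equality between the number of components of $Z\setminus P$ and $|P|+1$; once this is in place, the combinatorics of trees drives both the manifold structure of each stratum and the realisation of $Z$ as an iterated wedge, and purity falls out of the same tree structure.
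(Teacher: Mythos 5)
The paper does not prove this proposition (it is explicitly left to the reader), so there is no argument to compare against; I will simply assess your proposal on its own terms. Your two-step structure -- structural induction on the wedge construction for the forward direction, and a dual-graph/tree argument for the converse -- is a natural and correct way to prove the statement, and the key steps check out. In particular, the observation that purity of $Z$ at the new wedge point forces $x$ and $y$ to be regular (hence manifold) points, and that purity at the remaining singular points of $Z$ descends to purity of $X$ and $Y$, carries the forward induction; and in the converse, the Euler-characteristic count $|P|+1$ vertices vs.\ $|P|$ edges indeed forces the dual graph to be a tree, which rules out self-loops and multi-edges simultaneously and drives both the manifold structure of each stratum and the iterated-wedge realisation of $Z$.

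One small inaccuracy: you justify connectedness of $X$ and $Y$ in the decomposition $Z = X*_{x=y}Y$ by claiming ``every element of $\CN^d$ is connected.'' That is false as stated -- $\CN^d$ contains all $d$-manifolds, including disconnected ones. The correct (and simpler) justification is that if either $X$ or $Y$ were disconnected, then the extra components would be untouched by the wedge and $Z$ itself would be disconnected, contradicting the hypothesis. A second, smaller point worth making explicit is why the decomposition $Z=\bar C_i *_{p=p} Z'$ at a leaf $C_i$ of $T$ actually yields the topology of $Z$: this follows because $\bar C_i$ (a closure) and $Z' = Z\setminus C_i$ (complement of an open stratum) are both closed in $Z$ with intersection $\{p\}$, so the glueing lemma applies. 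Neither remark affects the validity of your argument; it is essentially complete once these justifications are tidied.
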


The set $\Sing(X)$ of singular points of a nodal manifold $X$ is finite and its complement, the {\em regular part} of $X$, is an honest $d$-manifold. In fact, if $V\in\pi_0(X\setminus\Sing(X))$ is a connected component of the regular part of $X$, then the closure $\overline{V}\subset X$ of $V$ in $X$ is also a $d$-dimensional manifold. We refer to the closures of the connected components of the regular parts as the {\em strata} of $X$ and denote by $\Strata(X)$ the collection of all strata of $X$.

A nodal manifold is {\em closed, orientable, oriented} or {\em smooth} if all its strata are. Similarly, a Riemannian metric on a smooth nodal manifold is nothing other than a Riemannian metric on the disjoint union of the strata. A continuous map $f:X\to N$ is {\em smooth} if it is smooth on each stratum of $X$ -- similarly for other kinds of maps such as H\"older, Lipschitz, quasiregular, etc. Other notions extend in the same way. For example, the energy density $\mu_f$ of a quasiregular map $f:X\to N$ from a Riemannian nodal manifold $X$ to a Riemannian manifold $N$ is the measure whose restriction to each stratum $V$ is the energy density of $f\vert_V$.

\begin{defi*}[Pinching map]
Suppose that $X$ is a pure smooth oriented nodal $d$-manifold and that $M$ is a smooth oriented $d$-manifold. A surjective continuous proper map $\pi: M\to X$ is a {\em pinching map} if the following two conditions are satisfied:
\begin{itemize}
\item $\pi^{-1}(x)$ is a smoothly embedded sphere of dimension $d-1$ for each $x\in\Sing(X)$, and
\item if $V\in \pi_0(X\setminus\Sing(X))$ is a component of the regular part then the restriction of $\pi$ to $\pi^{-1}(V)$ is an orientation preserving diffeomorphism onto $V$.
\end{itemize}
\end{defi*}

Note that, if $\pi:M\to X$ is a pinching map, then $M$ is homeomorphic to the connected sum of the strata of $X$. 

In some sense, the point of this paper is to explain how do nodal manifolds arise naturally when studying degenerating sequences of quasiregular maps. This heurstic principle is encoded into the following definition, which we state in the topological form before switching to the quasiconformal category.

\begin{defi*}[Convergence]
Let $M$ and $N$ be $d$-manifolds and let $X$ be a pure nodal $d$-manifold. A sequence 
\[
(f_n:M\to N)
\]
of continuous maps {\em converges} to a continuous map $f:X\to N$ if there are pairwise properly homotopic, uniformly proper pinching maps $\pi_n:M\to X$ for which the maps 
\[
f_n\circ\pi_n^{-1}:X\setminus\Sing(X)\to N
\]
converge uniformly on compacta to $f$. 

The convergence of $(f_n)$ to $f$ is {\em tight} if for every $\delta>0$ there exist $n_0\in \BN$ and an open neighborhood $A\subset X$ of $\Sing(X)$ such that, for every $n\ge n_0$ and $U \in \pi_0(A)$, holds $\diam(f_n(\pi_n^{-1}(U)))<\delta$.
\end{defi*}

Here, the assumption that the sequence $\pi_n:M\to\ X$ is {\em uniformly proper} just means that for every $C_1\subset M$ compact there are again compact sets $C\subset X$ and $C_2\subset M$ with
$$C_1\subset\pi_n^{-1}(C)\subset C_2$$
for every $n$. This condition is obviously only relevant if $M$ is non-compact. In this paper we will only find $M$ non-compact if $M=\BR^d$. 

\begin{bem}
If we want to stress the role of the pinching maps $(\pi_n)$ in the converge, we say that $(f_n)$ \emph{converges to $f$ along the sequence $(\pi_n)$}.
\end{bem}

We stress that we are assuming that the pinching maps $\pi_n:M\to X$ in this definition of convergence are pairwise homotopic. After a moment of reflection it becomes clear that some condition of the maps is needed. But then, once one is convinced that this is the case, it is quite possible that one finds the condition of being pairwise homotopic too feeble. It is however not clear what stronger condition one could impose. For example, if one were to assume that the maps $\pi_n$ are equicontinuous then Theorem \ref{main} would definitively fail to be true. But yes, just imposing that the maps $\pi_n$ are pairwise homotopic is a rather weak assumption. For instance, it is by itself not even sufficient to ensure that the maps $f_n$ belong to finitely many homotopy classes. In fact, we encourage the reader to produce an example of a sequence of maps $f_n:M\to N$ converging to a map $f:X\to N$ while having that $f_n$ and $f_m$ represent different homotopy classes for all $n\neq m$. Such examples can however only arise if the convergence fails to be tight:

\begin{lem}\label{lem-homotopy}
Let $M$ and $N$ be closed $d$-manifolds, $X$ a nodal manifold, and $D\ge 1$. If a sequence $(f_n:M\to N)$ of continuous maps of degree $D$ converges tightly to a map $f:X\to N$, then the maps $f_n$ represent only finitely many homotopy classes and $D=\sum_{V}\deg(f\vert_V)$, where the sum is over the strata of $X$.
\end{lem}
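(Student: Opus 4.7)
The plan is to prove that for all sufficiently large $n$, $f_n$ is homotopic to $f\circ \pi_n$; both conclusions of the lemma then follow at once. Since the $(\pi_n)$ are pairwise properly homotopic by the definition of convergence, the compositions $f\circ \pi_n$ all lie in a single homotopy class, so the $f_n$ with $n\ge n_0$ do as well, bounding the total number of homotopy classes of $(f_n)$ by $n_0$. The degree identity then reduces to $\deg(f\circ \pi_n)=\sum_{V\in\Strata(X)}\deg(f\vert_V)$: picking a regular value $y\in N$ of $f\vert_V$ for every stratum $V$, avoiding the finite set $f(\Sing(X))$, the preimage $(f\circ \pi_n)^{-1}(y)$ misses the pinching spheres $\pi_n^{-1}(\Sing(X))$, and since $\pi_n$ restricts to an orientation-preserving diffeomorphism on each $\pi_n^{-1}(V)$, the local degree count gives precisely the stratum-wise sum.

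To construct the homotopy $f_n\simeq f\circ \pi_n$, fix $\delta>0$ smaller than the injectivity radius of $N$, so that every metric ball of radius $\delta$ in $N$ is geodesically convex, contractible, and supports a canonical geodesic homotopy between any two maps into it. Using continuity of $f$ together with tightness of the convergence, choose an open neighborhood $A\subset X$ of $\Sing(X)$, each of whose components $U$ is a bi-disk neighborhood of a single singular point $p_U$, and an index $n_0\in\BN$ such that for every $n\ge n_0$ and every component $U$, both $f_n(\pi_n^{-1}(\overline U))$ and $f(\overline U)$ are contained in a common geodesic $\delta$-ball $B_U\subset N$. Each $\pi_n^{-1}(U)$ is a tubular neighborhood of the pinching sphere $\Sigma_{n,p_U}=\pi_n^{-1}(p_U)$, and $f_n$ sends it into the contractible $B_U$; combining a bump function supported in a slightly smaller neighborhood with a geodesic contraction of $B_U$ to a chosen point $q_U\in B_U$, we deform $f_n$, keeping it unchanged outside $\pi_n^{-1}(A)$, to a map $\tilde f_n$ that is constantly $q_U$ on a neighborhood of each $\Sigma_{n,p_U}$. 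Being constant on each pinching sphere, $\tilde f_n$ factors through the quotient $\pi_n$ as $\tilde f_n=g_n\circ\pi_n$ for a continuous $g_n\colon X\to N$.

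It remains to show $g_n\simeq f$ on $X$. On the compact set $X\setminus A$ we have $g_n=f_n\circ \pi_n^{-1}$, which by uniform convergence on compacta is within $\delta/2$ of $f$ once $n$ is large enough; the pointwise geodesic homotopy then connects $g_n$ to $f$ on $X\setminus A$. On each component $U$ of $A$, both $g_n(\overline U)$ and $f(\overline U)$ lie in the convex ball $B_U$, and the pointwise geodesic homotopy inside $B_U$ interpolates between them; since this homotopy agrees with the previous one on $\partial U$ (both being the geodesic from $g_n(x)$ to $f(x)$, which is unique because both endpoints lie in a common convex ball), splicing produces a continuous global homotopy $g_n\simeq f$. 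Thus $f_n\simeq \tilde f_n=g_n\circ\pi_n\simeq f\circ\pi_n$, as required. The main obstacle is the simultaneous choice of $A$ and the bump-function deformations near the poles: $A$ must be shrunk enough that the tightness diameter bounds, the oscillation of $f$, the containment in geodesic balls, and the matching of the two geodesic homotopies on $\partial U$ hold together, which is routine but requires care.
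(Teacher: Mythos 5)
Your proof is correct and follows essentially the same strategy as the paper: use tightness to find a neighborhood $A$ of $\Sing(X)$ on which both $f_n\circ\pi_n^{-1}$ and $f$ have small image, deform $f_n$ to be constant on the pinching spheres so it descends to a map $g_n$ on $X$, verify that $g_n$ is $C^0$-close to $f$ (and hence homotopic to it), and conclude $f_n\simeq f\circ\pi_n$; the degree identity then follows by counting preimages of a generic value. Your version is more explicit than the paper's about the geodesic-homotopy splicing on $\partial U$, which is a nice touch. The one thing to be careful about: your degree computation invokes Sard's theorem (``picking a regular value $y$ of $f|_V$''), which requires $f|_V$ to be at least $C^1$, whereas the lemma is stated for continuous maps. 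The paper handles this by first doing the computation in the smooth case and then appealing to smooth approximation on the strata together with the fact that $C^0$-close maps are homotopic; you should add a sentence to that effect.
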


\begin{proof}
Let $\pi_n:M\to X$ be the pinching maps in the definition of convergence and choose $\epsilon>0$ smaller than one tenth of the injectivity radius of $M$. Since the convergence is tight, there is a neighborhood $A$ of $\Sing(X)$ in $X$ with $\diam(f_n(\pi_n^{-1}(U)))<\epsilon$ for each $U\in \pi_0(A)$ and for all $n\in \BN$ large enough. Thus, by passing to a smaller neighborhood $A$ of $\Sing(X)$ if necessary, we may assume that $\diam(f(U)) < \epsilon$ for all $U\in \pi_0(A)$. 

The bound on the diameter of the image of the components of $A$ implies that, for all $n\in \BN$ large enough, the map $f_n$ is homotopic, via a homotopy whose tracks have length less than $\epsilon$, to another map $f_n':M\to N$ which is constant on each connected component of $\pi_n^{-1}(\Sing(X))$. The maps $f'_n$ descend to $X$ in the sense that there are maps $\hat f_n:X\to N$ satisfying $f_n'=\hat f_n\circ\pi_n$. 

Convergence of the maps $f_n$ to $f$ implies that for all $n$ large enough, we have $d_N(f_n(\pi_n^{-1}(x)),f(x))\le\epsilon$ for all $x\in X\setminus A$. The triangle inequality then implies that $d_N(\hat f_n(x),f(x))\le 2\epsilon$ for all $x\in X\setminus A$. Since each connected component of $\hat f_n(A)$ and $f(A)$ have diameter at most $\epsilon$ for large $n$, we get from the triangle inequality that $d_N(\hat f_n(x),f(x))<\inj(N)$ for all $x\in X$ and for all $n\in \BN$ large enough. This implies that $\hat f_n$ and $f$ are homotopic for $n$ large. Since $f_n$ is homotopic to $f_n'=f_n\circ\pi_n$, we have thus proved that $f_n$ and $f\circ\pi_n$ are homotopic for all $n\in \BN$ large enough. Since the maps $\pi_n$ are all pairwise homotopic, it follows that the maps $f_n$ and $f_m$ are pairwise homotopic to each other if $n$ and $m$ are large enough. The first claim follows. 

To prove the second claim, suppose for a moment that $f_n$ and $f$ are smooth. Since the map $\pi_n$ is an orientation preserving diffeomorphism outside of $\pi_n^{-1}(X)$ we get, for generic $y\in N$, that 
\begin{align*}
\deg(f_n)
&=\deg(f\circ\pi_n)=\sum_{x\in(f\circ\pi_n)^{-1}(y)}\mathrm{sign}(\det d(f\circ\pi_n)_x)\\
&=\sum_{x\in(f\circ\pi_n)^{-1}(y)}\mathrm{sign}(\det df_{\pi_n(x)})\cdot\mathrm{sign}(\det d(\pi_n)_x)\\
&=\sum_{z\in f^{-1}(y)}\mathrm{sign}(\det df_z)=\sum_{V\in \Strata(X)}\deg(f\vert_V).
\end{align*}
This proves the claim if $f_n$ and $f$ are smooth. The general case follows by smooth approximation of the maps $f_n$ and $f$ in the strata and the fact that close enough smooth approximations are homotopic to the original maps. 
\end{proof}

\subsection*{Convergence of quasiregular maps}

It is obvious from the definition of convergence of a sequence of maps between manifolds to a map with domain a nodal manifold, that some geometric condition for the involved pinching maps is needed in order to have the property that the limiting map is $K$-quasiregular if the members of the sequence are. This is the point of the following definition:

\begin{defi*}[Asymptotically conformal pinching maps]
A sequence 
\[
(\pi_n \colon M\to X)
\]
of pinching maps from an oriented Riemannian manifold to a pure oriented Riemannian nodal manifold  is \emph{asymptotically conformal} if there exists a sequence of positive numbers $(K_n)$ tending to $1$ and a sequence $(U_i)$ of relatively compact open sets in $X\setminus\Sing(X)$ with 
$$U_n\subset U_{n+1}\text{ for all }n\text{, with }X\setminus\Sing(X)=\cup U_n,$$ 
and such that $\pi_n^{-1}$ is $K_n$-quasiconformal on $U_n$ for all $n$.
\end{defi*}

After setting this up, we have that limits of $K$-quasiregular maps are $K$-quasiregular. 
 
\begin{lem}
\label{lemma:pinching-K}
Let $M$ and $N$ be oriented Riemannian $d$-manifolds and let $(f_n:M\to N)$ be a sequence of $K$-quasiregular mappings. If $(f_n)$ converges along a sequence $(\pi_n\colon M \to X)$ of asymptotically conformal pinching maps to a map $f \colon X\to N$, then $f$ is $K$-quasiregular and satisfies $\Vert\mu_f\Vert\le\limsup\Vert\mu_{f_n}\Vert$.  
\end{lem}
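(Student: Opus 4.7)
The plan is to work locally in the regular part of $X$, where the maps $f_n\circ\pi_n^{-1}$ are honest quasiregular maps whose distortion can be controlled by the asymptotic conformality of $(\pi_n)$. Since $\Sing(X)$ is finite and $X\setminus\Sing(X)$ is exhausted by the relatively compact open sets $U_n$, and since the strata meet $\Sing(X)$ only at a finite set of points, it is enough to prove that the restriction of $f$ to an arbitrary relatively compact open set $U\subset X\setminus\Sing(X)$ is $K$-quasiregular with $\mu_f(U)\le\limsup_n\Vert\mu_{f_n}\Vert$; the global claim then follows by exhausting each stratum and noting that the (finitely many) nodal points carry no $\mu_f$-mass.

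First I would fix such a $U$ and an integer $n_0$ with $\overline{U}\subset U_n$ for all $n\ge n_0$. For each such $n$, the map $\pi_n^{-1}$ is $K_n$-quasiconformal on $U_n$ and $f_n$ is $K$-quasiregular on $M$, so the composition
\[
g_n = f_n\circ\pi_n^{-1}\colon U\to N
\]
is $KK_n$-quasiregular on $U$. By the hypothesis of convergence, $(g_n)$ tends to $f|_U$ uniformly on compacta of $U$. Now $KK_n\to K$, so for every $\epsilon>0$ the tail of the sequence consists of $(K+\epsilon)$-quasiregular maps; applying the first half of (QR3) we deduce that $f|_U$ is $(K+\epsilon)$-quasiregular. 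Letting $\epsilon\to 0$, inequality \eqref{eq:K} passes to the limit almost everywhere, and $f|_U$ is $K$-quasiregular. (Equivalently, one argues that $\norm{df_{g_n}}^d\le KK_n\det(dg_n)$ a.e., together with $W^{1,d}_{\mathrm{loc}}$-weak compactness, forces $\norm{df}^d\le K\det(df)$ a.e.\ on $U$.)

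For the energy inequality, the key identity is (QR2) applied to the quasiconformal change of coordinates $\pi_n^{-1}$: on $U$,
\[
\mu_{g_n} = (\pi_n)_*\bigl(\mu_{f_n}|_{\pi_n^{-1}(U)}\bigr),
\]
so in particular $\mu_{g_n}(U)=\mu_{f_n}(\pi_n^{-1}(U))\le\Vert\mu_{f_n}\Vert$. By the second half of (QR3), applied to the sequence $g_n\to f|_U$, the measures $\mu_{g_n}$ converge weak-$\ast$ to $\mu_{f|_U}$ on $U$; hence for any open $V$ with $\overline{V}\subset U$,
\[
\mu_f(V)\ \le\ \liminf_{n\to\infty}\mu_{g_n}(V)\ \le\ \limsup_{n\to\infty}\Vert\mu_{f_n}\Vert.
\]
Exhausting $X\setminus\Sing(X)$ by such sets $V$ (and using that $\Sing(X)$ is $\mu_f$-null) yields $\Vert\mu_f\Vert\le\limsup_n\Vert\mu_{f_n}\Vert$.

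The only mildly delicate point is the step where the quasiregularity constants $KK_n$ vary with $n$ and tend to $K$ rather than being constant, so that (QR3) does not apply verbatim. This is overcome exactly as indicated: by treating the sequence as $(K+\epsilon)$-quasiregular for every $\epsilon>0$ from some index on, then letting $\epsilon\to 0$. Everything else is a routine exhaustion argument, using that the nodal set is finite and thus negligible for both the quasiregularity condition and the energy.
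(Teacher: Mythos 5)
Your proof is correct and follows essentially the same approach as the paper: localize on a relatively compact $U\subset X\setminus\Sing(X)$, observe that $f_n\circ\pi_n^{-1}$ is $KK_n$-quasiregular there, pass to the limit via (QR3) for both the distortion inequality and the weak-$\ast$ convergence of energy densities, then handle the finitely many nodal points by removability of isolated singularities (which you gesture at with ``negligible for the quasiregularity condition''; the paper cites Rickman's Theorem~III.2.8 explicitly). You are actually slightly more careful than the paper about the fact that the constants $KK_n$ vary with $n$ and only tend to $K$, resolving it correctly via the $(K+\epsilon)$-quasiregular tail argument.
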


\begin{proof}
It suffices to show that $f$ is $K$-quasiregular in each stratum $V$. Let $x\in V\setminus \Sing(X)$. Then there exists a relatively compact neighborhood $U$ of $x$ not meeting $\Sing(X)$. Thus, for $n\in \BN$ large enough, $\pi_n$ is $K_n$-quasiconformal in $\pi_n^{-1}(U)$ and $f_n \circ \pi_n^{-1}|_U \colon U \to N$ is $K_n K$-quasiregular. Thus, by (QR3), the limiting map $f$ is $K$-quasiregular on $U$, and hence on $V\setminus \Sing(X)$. Since $V\cap \Sing(X)$ consists of isolated points and $f$ is continous in $V$, it now follows that $f$ is quasiregular in $V$ by the removability of isolated singularities \cite[Theorem III.2.8]{Rickman-book}. The bound for the energy
$$\Vert\mu_f\Vert\le\limsup\Vert\mu_{f_n}\Vert$$
also follows from (QR3).
\end{proof}

Before moving on, note that nodal manifolds $X$ for which there is a sequence of asymptotically conformal pinching maps $M\to X$ are rather special. For example, in the case that $M=\BR^d$ we get from the definition that every connected component $V$ of $X\setminus\Sing(X)$ has an exhaustion by nested open sets $U_n$, each of which can be $K_n$-quasiconformally embedded into $\BR^d$ for some sequence $K_n\to 1$. It follows that $V$ can be conformally embedded into $\BR^d$. And then the removability of singularities \cite[Theorem III.2.8]{Rickman-book} implies that in fact every stratum can itself be conformally embedded into $\BS^d$. Since all bubbles of $X$ are compact, we get that all bubbles are conformally equivalent to $\BS^d$.

\begin{lem}\label{kor-nodalRn-conformal}
Suppose that a pure Riemannian nodal manifold $X$ is such that there is a sequence $(\pi_n:\BR^n\to X)$ of asymptotically conformal pinching maps. Then each bubble is conformal to $\BS^d$.\qed
\end{lem}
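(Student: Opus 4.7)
My plan is to follow the sketch in the paragraph preceding the lemma statement: realize each stratum as a conformal subset of $\BS^d$ by taking a limit of the $\pi_n^{-1}$'s, and then use compactness of a bubble to upgrade such an embedding into a conformal equivalence. Fix a bubble $V$ and write $V^\circ = V \setminus \Sing(X)$, a connected open $d$-manifold (here I read the $\BR^n$ in the statement as a typo for $\BR^d$). The asymptotic conformality hypothesis supplies relatively compact open sets $U_n \subset U_{n+1}$ exhausting $X \setminus \Sing(X)$ and a sequence $K_n \to 1$ such that each $\pi_n^{-1}$ is $K_n$-quasiconformal on $U_n$. Restricting to $V$ one obtains $K_n$-quasiconformal embeddings
\[
\psi_n := \pi_n^{-1}\vert_{U_n \cap V^\circ} \colon U_n \cap V^\circ \longrightarrow \BR^d \hookrightarrow \BS^d,
\]
whose domains exhaust $V^\circ$ as $n \to \infty$.

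Next I would normalize and extract a limit. Choose three distinct points $p_1, p_2, p_3 \in V^\circ$; for all large $n$ they lie in $U_n \cap V^\circ$. Post-composing each $\psi_n$ with a M\"obius transformation of $\BS^d$, which is conformal and so preserves the quasiconformal constant, I may arrange that $\psi_n(p_i) = q_i$ for a fixed triple of distinct points $q_1, q_2, q_3 \in \BS^d$. Standard compactness of normalized $K_n$-quasiconformal families, together with a diagonal extraction over an exhaustion of $V^\circ$ by compacts, then yields a subsequence along which $\psi_n$ converges locally uniformly on $V^\circ$ to a map $\psi \colon V^\circ \to \BS^d$. Since $K_n \to 1$, the limit $\psi$ is $1$-quasiconformal, hence conformal; the three-point normalization prevents $\psi$ from being constant, and a Hurwitz-type argument --- two disjoint balls around a hypothetical double point of $\psi$ would, for large $n$, have overlapping $\psi_n$-images, contradicting injectivity of $\psi_n$ on the union of those balls --- shows that $\psi$ is in fact a conformal embedding.

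To finish I would remove the remaining singularities and use compactness of $V$. The set $V \cap \Sing(X)$ is finite and $\psi$ is a bounded quasiregular map near each such point, so the removability theorem \cite[Theorem III.2.8]{Rickman-book} extends $\psi$ to a conformal map $\bar\psi \colon V \to \BS^d$. Since $V$ is a closed connected $d$-manifold --- a bubble is compact, and every stratum is a manifold in its own right by the remark at the start of Section \ref{sec-nodal} --- the image $\bar\psi(V)$ is open by the open mapping property of non-constant quasiregular maps and closed by compactness, hence all of $\BS^d$. Thus $\bar\psi$ is a conformal covering of $\BS^d$, and since $\BS^d$ is simply connected for $d \ge 2$ it is a conformal diffeomorphism, so $V$ is conformally equivalent to $\BS^d$.

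The delicate step is the extraction of $\psi$ as a conformal embedding: one needs that the three-point-normalized sequence of $K_n$-quasiconformal embeddings with $K_n \to 1$ admits a locally uniformly convergent subsequence on every compact of $V^\circ$, and one must rule out that the limit folds, i.e., is a non-injective conformal map. Both are standard consequences of quasiconformal compactness, but the injectivity uses in an essential way that the $\psi_n$ are homeomorphisms onto their images rather than merely quasiregular; everything else in the plan is a routine combination of results already cited in Section \ref{sec:qr}.
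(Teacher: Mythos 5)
Your proposal follows essentially the same route as the paper's own sketch in the paragraph immediately preceding the lemma: exhaust the punctured bubble $V^\circ$ by the sets $U_n\cap V^\circ$, on which $\pi_n^{-1}$ is a $K_n$-quasiconformal embedding with $K_n\to 1$; normalise and extract a locally uniform limit that is a conformal embedding into $\BS^d$; remove the finitely many singularities; and use compactness of the bubble to upgrade the embedding to a conformal equivalence with $\BS^d$. The paper states this only as a sketch (the lemma carries its own \(\qed\)), and you supply the normalisation, the extraction of the limit, the Hurwitz-type injectivity argument, and the covering-space conclusion, which is precisely what the sketch omits; so the two arguments coincide. Two small remarks: the paper's phrasing ``conformally embedded into $\BR^d$'' hints at a similarity (rather than M\"obius) normalisation, which keeps $\infty$ out of the images and makes the appeal to the quasiconformal normal family theorem slightly cleaner than a three-point M\"obius normalisation where the omitted point $\mu_n(\infty)$ moves with $n$; and when you invoke ``standard compactness of normalised $K_n$-quasiconformal families'' it would be worth attaching a precise reference (e.g.\ the relevant normal family and distortion theorems in V\"ais\"al\"a's lecture notes \cite{Vaisala-book}), since this is the one step that genuinely uses the injectivity of the $\psi_n$.
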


\subsection*{Tight convergence from renormalisation at poles}
Recall that by Proposition \ref{meat2} a sequence of $K$-quasiregular maps of fixed degree has a subsequence which converges outside of a finite set of poles. Nodal manifolds arise when we blow up the poles of convergent sequences of maps. We discuss next how to isolate what happens near the poles of what happens elsewhere. To keep the discussion structured we need two definitions:

\begin{defi*}[Blow-up sequence]
A sequence $(\sigma_n \colon \BR^d \to M)$ of smooth embeddings is a {\em blow-up sequence on a Riemannian manifold $M$ at point $p\in M$} if there exist sequences $(r_n)$, $(K_n)$, and $(\Lambda_n)$ tending respectively to $0$, $1$, and $\infty$ as $n\to \infty$ so that
\begin{itemize}
\item $\sigma_n(\BR^d) = B^M(p,r_n)$ and
\item $\sigma_n|_{\BB(0,\Lambda_n)}$ is $K_n$-quasiconformal
\end{itemize}
for each $n\in \BN$.
\end{defi*}

\begin{defi*}[Renormalisation]
Let $p\in M$ be a pole for a sequence $(f_n \colon M\to N)$ of $K$-quasiregular maps. A blow-up sequence $(\sigma_n \colon \BR^d \to M)$ at $p$ is a \emph{renormalisation sequence} if 
\begin{itemize}
\item there exists a nodal Riemannian $d$-manifold $X^p$ with main stratum isometric to $\BR^d$ and a map $F^p \colon X^p \to N$ such that the sequence $(f_n \circ \sigma_n \colon \BR^d \to N)$ converges tightly to a map $F^p$ along a sequence of asymptotically conformal pinching maps $\pi_n:\BR^d\to X^p$, such that
\item for each $\epsilon>0$ and $\delta_0>0$ there are $\delta\in(0,\delta_0)$ and $Z\subset X^p$ compact with
$$\mu_{f_n}\left(B^M(p,\delta)\setminus \pi^{-1}_n(X^p\setminus Z)\right) < \epsilon$$
for each $n\in \BN$. 
\end{itemize}
We say that $F^p: X^p\to N$ is the \emph{$(\sigma_n)$-blow-up of $(f_n)$ at $p$}. 
\end{defi*}


One should think of the second condition in the definition of {\em renormalisation sequence} as asserting that the "convergence of the maps $f_n\circ\sigma_n:\BR^d\to N$ is tight near infinity" -- compare with Proposition \ref{lem-neck2} and with the proof of the following lemma:

\begin{lem}
\label{lemma:blow-up-limit}
Fix $K\ge 1$ and $C>0$, suppose that $N$ is a closed manifold, and let $(f_n \colon M \to N)$ be a sequence of $K$-quasiregular maps with uniformly bounded energy $\Vert\mu_{f_n}\Vert\le C$. Assume that $(f_n)$ converges locally uniformly to $f\colon M \to N$ in the complement of a finite set of poles $P\subset M$. Fix also $p\in P$, let $(\sigma_n)$ be a renormalisation sequence for $(f_n)$ at $p\in M$, and let 
$$F^p \colon X^p \to N$$ 
be the $(\sigma_n)$-blow-up of the sequence $(f_n)$. Then the following holds:
\begin{itemize}
\item The main stratum of $X^p$ is isometric to $\BR^d$ and all the others are conformally equivalent to $\BS^d$. 
\item The map $F^p$ is $K$-quasiregular and extends continuously to a $K$-quasiregular map $\widehat F^p \colon \widehat X^p \to N$ on the one-point compactification $\widehat X^p = X^p\cup \{\widehat p\}$ of $X^p$. 
\item Moreover, $\widehat F^p(\widehat p) = f(p)$. 
\end{itemize}
\end{lem}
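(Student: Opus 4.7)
The first bullet is immediate from the definitions. The main stratum being isometric to $\BR^d$ is built into the definition of renormalisation sequence, and the bubbles being conformally equivalent to $\BS^d$ follows from Lemma \ref{kor-nodalRn-conformal} applied to the asymptotically conformal pinching maps $\pi_n \colon \BR^d \to X^p$. For the $K$-quasiregularity of $F^p$ on $X^p$, the composition $f_n \circ \sigma_n \colon \BB(0, \Lambda_n) \to N$ is $K K_n$-quasiregular with $K_n \to 1$ by the blow-up definition, so Lemma \ref{lemma:pinching-K} (with the factor $K_n$ absorbed into the asymptotic conformality budget and a standard $K+\eta \downarrow K$ argument) yields that $F^p$ is $K$-quasiregular on $X^p$.

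The main task is the continuous extension at $\widehat p$ with value $\widehat F^p(\widehat p) = f(p)$. Since each bubble of $X^p$ is compact, only the main stratum $\BR^d \subset X^p$ is non-compact, so it suffices to show $F^p(y) \to f(p)$ as $y \to \widehat p$ through the main stratum. Fix $\epsilon > 0$. Using continuity of $f$, pick $\delta_0 > 0$ smaller than the distance from $p$ to $P \setminus \{p\}$ with $f(\overline{B^M(p, \delta_0)}) \subset B^N(f(p), \epsilon)$. Since $\partial B^M(p, \delta_0) \subset M \setminus P$ is compact, the uniform convergence $f_n \to f$ on this set from Proposition \ref{meat2} gives $f_n(\partial B^M(p, \delta_0)) \subset B^N(f(p), 2\epsilon)$ for $n$ large. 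Let $\omega > 0$ be the constant supplied by Proposition \ref{lem-neck2} for diameter threshold $\epsilon$, and invoke the second (tightness) condition in the definition of renormalisation with this $\omega$ and $\delta_0$ to obtain $\delta \in (0, \delta_0)$ and a compact $Z \subset X^p$.

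The operational content of the tightness condition is that the $f_n$-mass of the portion of $B^M(p, \delta)$ outside the captured blow-up image $\sigma_n(\pi_n^{-1}(Z))$ is at most $\omega$, uniformly in $n$. Since $Z$ is compact, $\pi_n^{-1}(Z) \subset \BB(0, R_Z) \subset \BR^d$ for some fixed $R_Z$, so $\sigma_n(\pi_n^{-1}(Z)) \subset B^M(p, r_n')$ with $r_n' \to 0$. Hence $B^M(p, \delta) \setminus \sigma_n(\pi_n^{-1}(Z))$ contains the annulus $B^M(p, \delta) \setminus B^M(p, r_n')$, whose conformal modulus tends to $\infty$. Applying Proposition \ref{lem-neck2} after conformal uniformisation of this annulus as a cylinder yields $\diam f_n(B^M(p, \delta) \setminus \sigma_n(\pi_n^{-1}(Z))) \le \epsilon$; combined with the boundary bound on $\partial B^M(p, \delta_0)$, we get $f_n(B^M(p, \delta) \setminus \sigma_n(\pi_n^{-1}(Z))) \subset B^N(f(p), 3\epsilon)$ for $n$ large. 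For $y$ in the main stratum of $X^p$ with $y \notin Z$, the point $\sigma_n(\pi_n^{-1}(y))$ lies in this region for $n$ large, so $F^p(y) = \lim_n f_n(\sigma_n(\pi_n^{-1}(y))) \in \overline{B^N(f(p), 3\epsilon)}$. Letting $\epsilon \to 0$ gives the continuous extension with $\widehat F^p(\widehat p) = f(p)$.

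The $K$-quasiregularity of $\widehat F^p \colon \widehat X^p \to N$ at $\widehat p$, a regular point of the one-point compactification $\BS^d$ of the main stratum, then follows from removability of isolated singularities for bounded quasiregular maps, as in \cite[Theorem III.2.8]{Rickman-book} or equivalently (QR7). The main technical obstacle is the correct translation of the tightness condition into a uniform energy bound on the precise annular region to which Proposition \ref{lem-neck2} is applied, and the verification that for $y$ outside $Z$ in the main stratum, the point $\sigma_n(\pi_n^{-1}(y))$ actually lands outside $\sigma_n(\pi_n^{-1}(Z))$ for $n$ large, which is ensured by the properness of the pinching maps.
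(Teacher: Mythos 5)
The proposal is correct and takes essentially the same approach as the paper: the first bullet from the definitions plus Lemma \ref{kor-nodalRn-conformal}, the second from Lemma \ref{lemma:pinching-K} and (QR7), and the third by combining the tightness condition, Proposition \ref{lem-neck2} applied to the neck region, and locally uniform convergence of $f_n$ to $f$ near (but away from) $p$. The only quibble is a small bookkeeping slip: the "boundary bound" should be taken on $\partial B^M(p,\delta)$ (or a sphere slightly inside the neck), not on $\partial B^M(p,\delta_0)$, since the latter lies outside the neck $B^M(p,\delta)\setminus\sigma_n(\pi_n^{-1}(Z))$; this is easily repaired because $\overline{B^M(p,\delta)}\subset\overline{B^M(p,\delta_0)}$, so the same continuity/uniform-convergence estimate gives $f_n(\partial B^M(p,\delta))\subset B^N(f(p),2\epsilon)$.
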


\begin{proof}
We get from the definition of renormalisation sequence that the main stratum of $X^p$ is isometric to $\BR^d$ -- all the bubbles are conformally equivalent to $\BS^d$ by Lemma \ref{kor-nodalRn-conformal}. We have proved that first claim.

Lemma \ref{lemma:pinching-K} implies that the limiting map $F^p$ is $K$-quasiregular and has finite energy $\Vert\mu_{F^p}\Vert\le C$. Since the main stratum of $X^p$ is isometric to $\BR^d$, and since $\hat p=\hat X^p\setminus X^p$ is nothing other than the point at infinity of $\BR^d$, we get from (QR7) that the map $F^p$ extends continuously to a $K$-quasiregular map $\widehat F^p \colon \widehat X^p \to N$. We are done with the second claim.

So far we have only used the first condition in the definition of renormalisation sequence. We now use the second one, but before doing so we identify the main stratum of $X^p$ with $\BR^d$ and accordingly $\hat p=\BS^d\setminus\BR^d$. Now, the second condition in the definition of renormalisation sequence implies, together with Proposition \ref{lem-neck2}, that for each $\epsilon>0$ there exists $\delta>0$, $R>0$, and $n_0\in \BN$ with
\[
\diam \left(f_n ( B^M(p,\delta)\setminus \pi_n^{-1}(\BB(0,R))) \right) < \epsilon
\]
for each $n\ge n_0$. Now, for $R$ large, points in $\pi_n^{-1}(\BS(0,R))$ get mapped by $f_n$ near the image $\hat F^p(\hat p)$ of the point at infinity under the extension $\hat F_p$. And, since the maps $f_n$ converge uniformly on compacta in $U\setminus\{p\}$ for some open neighborhood $U$ of $p$ in  $M$ we get that, for $\delta$ small, points in $S^M(p,\delta)$ get mapped by $f_n$ near $f(p)$. It thus follows that $\hat F^p(\hat p)$ is close to $f(p)$. Decreasing $\delta$ and increasing $R$, we improve our upper bound on the distance between $\hat F^p(\hat p)$ and $f(p)$, getting in the limit that both points agree.
\end{proof}

Suppose now that $(f_n\colon M\to N)$ is a sequence of quasiregular maps between Riemannian $d$-manifolds which converges locally uniformly on $M\setminus P$ to a map $f\colon M \to N$, where $P$ is the finite set of poles of $(f_n)$. Suppose also that for each $p\in P$, we have fixed a renormalizing sequence $(\sigma^p_n)$ for $(f_n)$ and let $\widehat F^p:\widehat X^p\to N$ be the associated blow up. Consider the nodal $d$-manifold
\begin{equation}\label{pekkaisapain}
X = (M \sqcup_{p\in P} \widehat X^p)\big/{\sim},
\end{equation}
where the equivalence relation $\sim$ is the minimal one identifying each pole $p\in P$ with the point $\hat p=\hat X^p\setminus X^p$, and consider the map
\begin{equation}\label{noooojustkidding}
F \colon X \to N
\end{equation}
defined by $F|_M = f$ and $F|_{\widehat X^p} = F^p$ for each $p\in P$. The map $F$ is well-defined by Lemma \ref{lemma:blow-up-limit}. We think of $F$ as the \emph{total blow} up of $(f_n)$ associated to the chosen renormalizing sequences.

After all these definitions, the following lemma is almost a tautology.

\begin{lem}\label{blowup}
Fix $K\ge 1$ and $C>0$, suppose that $N$ is a closed manifold, and let $(f_n \colon M \to N)$ be a sequence of $K$-quasiregular maps with $\Vert\mu_{f_n}\Vert\le C$ for all $n$. Assume that the maps $f_n$ converge locally uniformly in the complement of a finite set of poles $P\subset M$ to a map $f\colon M\to N$. Suppose also that, for each $p\in P$, we have a renormalizing sequence $(\sigma^p_n)$ for $(f_n)$ at $p$, and let $\widehat F^p:\widehat X^p\to N$ be the one-point compactification of the associated blow-up.

Then the sequence $(f_n)$ converges tightly to a $K$-quasiregular map $F\colon X\to N$ with $X$ and $F$ as in \eqref{pekkaisapain} and in \eqref{noooojustkidding}, respectively.
\end{lem}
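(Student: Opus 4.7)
The strategy is to construct pinching maps $\pi_n \colon M \to X$ and then verify the definitions of tight convergence and asymptotic conformality; the $K$-quasiregularity of $F$ will then follow from Lemma \ref{lemma:pinching-K}. The construction is by cut-and-paste near each pole $p \in P$.

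For each $p \in P$, pick radii $R_n \le R_n' < \Lambda_n$ tending to infinity slowly enough that any fixed compact subset of $X^p$ is eventually contained in $\pi^p_n(\BB(0,R_n))$ and so that $\sigma^p_n$ is $K_n$-quasiconformal on $\BB(0,R_n')$ with $K_n \to 1$. Define $\pi_n \colon M \to X$ piecewise: outside $\bigcup_p \sigma^p_n(\BB(0,R_n'))$ let $\pi_n$ be the inclusion into the main stratum $M \subset X$; on each $\sigma^p_n(\BB(0,R_n))$ set $\pi_n = \pi^p_n \circ (\sigma^p_n)^{-1}$, which maps into $X^p \subset \widehat X^p$; and on the collar $\sigma^p_n(\BB(0,R_n') \setminus \overline{\BB(0,R_n)})$ install the standard radial "pinching collar" that collapses the inner sphere $\sigma^p_n(\BS(0,R_n))$ onto the node $p = \hat p$ and sends the complement diffeomorphically onto a small punctured neighborhood of $p$ in $M \subset X$. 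Since the model radial collar map between Euclidean annuli is conformal and the conjugating maps $\sigma^p_n$ and $\pi^p_n$ are $K_n$-quasiconformal on the relevant scales with $K_n \to 1$, the resulting $\pi_n$ are asymptotically conformal pinching maps; they are manifestly pairwise homotopic (and uniformly proper, which is trivial for closed $M$).

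Next, verify that $f_n \circ \pi_n^{-1}$ converges uniformly on compacta of $X \setminus \Sing(X)$ to $F$. A compact set lying in the main stratum at positive distance from all poles is handled by the hypothesized locally uniform convergence $f_n \to f$ on $M \setminus P$; a compact set in a bubble lifts through $\pi^p_n$ and is handled by the definition of the blow-up $F^p$; and the consistency at each node $p = \hat p$ is provided by Lemma \ref{lemma:blow-up-limit}, which gives $\widehat F^p(\hat p) = f(p)$.

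The main obstacle is tightness at the "new" nodes $p = \hat p$; tightness at the remaining nodes, which sit inside individual $\widehat X^p$'s, is inherited directly from the tight convergence of $f_n \circ \sigma^p_n$ to $F^p$ built into the renormalization sequence. Fix $\delta > 0$; we seek an open neighborhood $U$ of $p = \hat p$ in $X$ whose $\pi_n$-preimage has $f_n$-image of diameter less than $\delta$ for all large $n$. In $M$, this preimage is precisely an annular region about the pinched sphere $\sigma^p_n(\BS(0,R_n))$, contained in $B^M(p,\delta')$ and outside $\sigma^p_n(\BB(0,R))$ for any prescribed large $R$ and small $\delta'$. The second axiom of a renormalization sequence, applied exactly as in the proof of Lemma \ref{lemma:blow-up-limit}, bounds the $f_n$-energy on such an annular region by an arbitrarily small amount, and then Proposition \ref{lem-neck2} converts this into the required diameter bound. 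Plugging this into the definition of tight convergence completes the argument, and Lemma \ref{lemma:pinching-K} upgrades the conclusion to the $K$-quasiregularity of $F$.
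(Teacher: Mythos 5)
Your overall strategy matches the paper's -- construct explicit pinching maps by cut-and-paste near the poles, then invoke Lemma \ref{lemma:pinching-K} for quasiregularity and Proposition \ref{lem-neck2} together with the second renormalisation axiom for tightness -- and your treatment of tightness at the new nodes is essentially correct. However, the explicit construction of the pinching maps $\pi_n$ has a genuine error.

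The problem is where you place the pinching collar. You set $\pi_n = \pi^p_n\circ(\sigma^p_n)^{-1}$ on $\sigma^p_n(\BB(0,R_n))$ and install the collar on $\sigma^p_n(\BB(0,R_n')\setminus\overline{\BB(0,R_n)})$, so that the collapsed sphere is $\sigma^p_n(\BS(0,R_n))$. This fails on two counts. First, continuity: approaching $\sigma^p_n(\BS(0,R_n))$ from inside, the map $\pi^p_n\circ(\sigma^p_n)^{-1}$ tends to $\pi^p_n(\BS(0,R_n))$, a genuine $(d-1)$-sphere in the main stratum $\BR^d\subset X^p$, whereas from the collar side the limit is the node $\hat p$; these do not match, since $\hat p$ is the point at infinity of $\BR^d$, not $\pi^p_n(\BS(0,R_n))$. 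Second, surjectivity: the image of $\sigma^p_n(\BB(0,R_n))$ under $\pi^p_n\circ(\sigma^p_n)^{-1}$ is the relatively compact set $\pi^p_n(\BB(0,R_n))\subsetneq X^p$, and the collar and outer pieces land in the main stratum $M\subset X$, so $X^p\setminus\pi^p_n(\BB(0,R_n))$ (in particular the region of the main stratum of $X^p$ near $\hat p$, and possibly some bubbles) is not covered. Hence your $\pi_n$ is not a pinching map onto $X$.

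The correct placement is the one the paper uses: take the inner piece to be $\pi^p_n\circ(\sigma^p_n)^{-1}$ on \emph{all} of $\sigma^p_n(\BR^d)=B^M(p,r^p_n)$, whose image is the whole of $X^p$, and place the radial pinching collar entirely \emph{outside} the image of $\sigma^p_n$, say on $B^M(p,2r^p_n)\setminus B^M(p,r^p_n)$, collapsing $\D B^M(p,r^p_n)$ to the node. Then continuity holds at $\D B^M(p,r^p_n)$ because $(\sigma^p_n)^{-1}$ blows up to infinity in $\BR^d$ there, which is $\hat p$ in $\widehat X^p$, and surjectivity is clear. Your subsequent tightness estimate -- bounding the $f_n$-energy on the annulus $B^M(p,\delta')\setminus\sigma^p_n(\BB(0,R))$ via the second renormalisation axiom and converting to a diameter bound via Proposition \ref{lem-neck2} -- then goes through unchanged, since that annulus contains the $\Pi_n$-preimage of any small neighbourhood of $\hat p$ for the corrected $\Pi_n$ as well.
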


\begin{proof}
For each $p\in P$, let $(\pi^p_n)$ be an asymptotically conformal pinching sequence along which $(f_n \circ \sigma^p_n)$ conveges to $F^p$ and consider the sequence of maps $(\Pi^p_n:M\to X)$ defined as 
\[
x \mapsto \begin{cases} x &\text{ if } d_M(x,p)>2r_n^p,\\
\exp_{p}\left(2\cdot(d_M(x,p)-r_n^p)\cdot\exp_{p}^{-1}(x)\right) &\text{ if } 2r_n^p\ge d_M(x,p)\ge r_n^p,\\
(\pi_n\circ(\sigma^p_n)^{-1})(x) &\text{ if } r^p_n>d_M(x,p),
\end{cases}
\]
where $r^p_n>0$ is the radius of the image of $\sigma^p_n: \BR^d \to B^M(p,r^p_n)$. 

Since $r_n^p \to 0$ as $n\to \infty$ for each $p\in P$, we may assume that, for each $n\in \BN$, balls $B^M(p,2r_n^p)$ for $p\in P$ are mutually disjoint. 
Let $X$ be the nodal manifold in \eqref{pekkaisapain} and $F:X\to N$ the map in  \eqref{noooojustkidding}. Then, for each $n\in \BN$, we may define $\Pi_n \colon M\to X$ to be in $B^M(p,r_n^p)$ the map $\Pi^p_n$ and to agree with all the maps $\Pi_n^p$ in the complement of all of these balls. Clearly, $(\Pi_n)$ is a sequence of homotopic pinching maps $M \to X$. They are also isometric outside of a fixed compact set, and a fortiori uniformly proper.

Since, for each $p\in P$, the maps $(f_n \circ \sigma^p_n)$ converge tightly to $F^p$ along $(\pi^p_n)$ and $\diam((f_n\circ \Pi_n^{-1})(B^M(p,2r_n^p)\setminus B^M(p,r_n^p))) \to 0$ as $n\to \infty$ by Proposition \ref{lem-neck2}, we observe that the maps $f_n$ converge tightly to $F$ along $(\Pi_n)$.
\end{proof}

\section{The bubbling theorem}
\label{sec:proof-main}

In this section we prove Theorem \ref{main}. 

\begin{named}{Theorem \ref{main}}
Let $M$ and $N$ be closed and oriented Riemannian $d$-manifolds for $d\ge 2$, and let $K\ge 1$ and $D\ge 1$. Then each sequence $(f_n:M\to N)$ of $K$-quasiregular maps of degree $D$ has a subsequence $(f_{n_k})$, which converges tightly to a $K$-quasiregular map $f:X\to N$ on some smooth pure nodal Riemannian $d$-manifold $X$ satisfying 
\begin{equation*}
\tag{\ref{eq:degree}}
D=\sum_{V\in\Strata(X)}\deg(f\vert_V).
\end{equation*}
Furthermore, the main stratum of $X$ is diffeomorphic to $M$ and all bubbles are standard spheres $\BS^d$.
\end{named}

The idea of the proof is to make the bubbles appear by scaling up small balls around the poles of the sequence $(f_n)$. That is, proving that suitably chosen blow-up sequences are actually renormalisation sequences. In some sense the hardest problem is to determine the right scale. We are going to scale so that the obtained maps are {\em normalized} in the sense of the following definition:

\begin{defi*}[Normalized sequence]
A sequence $(f_n \colon \BR^d \to N)$ of maps is \emph{$(K,\omega, D)$-normalized for $K\ge 1$, $\omega>0$, and $D\in \BN$}, if there is a sequence $(\Lambda_n)$ tending to infinity such that 
\begin{itemize}
\item $f_n$ is $K$-quasiregular on $\BB(0,\Lambda_n)$,
\item $\mu_{f_n}(\BB(0,\Lambda_n)\setminus\BB(0,1))\le 2\omega$, and 
\item $\mu_{f_n}(\BB(x,\frac 14))\le \mu_f(\BB(0,\Lambda_n))-\omega$ for all $x\in \BB(0,\Lambda_n)$,
\end{itemize}
for each $n$, and with $\lim_{n\to \infty} \mu_{f_n}(\BB(0,\Lambda_n)) = D\cdot \vol(N)$.
\end{defi*}

\begin{bem}
It follows from Proposition \ref{meat} that, as long as $\omega<\frac 13\vol(N)$, the set of poles of any normalized sequence is contained in the unit ball $\overline{\BB(0,1)}$.
\end{bem}

The following proposition, the main step in the proof of Theorem \ref{main}, is in its essence a local version of the whole theorem.

\begin{prop}[Local bubbling theorem]
\label{prop:lbt}
Let $N$ be a closed Riemannian manifold, $K\ge 1$, $0<\omega<\max\{\frac 1{10}\vol(N),\frac 18\}$, $D\in \BN$, and $(f_n \colon \BR^d \to N)$ a $(K,\omega,D)$-normalized sequence which converges locally uniformly to some $f:\BR^d\setminus P\to N$ in the complement of a set of poles $P\subset\overline{\BB(0,1)}$. Then, up to possible passing to a subsequence, the sequence $(f_n)$ admits a tight renormalisation sequence $(\sigma_n^p)$ at each $p\in P$.
\end{prop}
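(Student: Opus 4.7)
The plan is to prove Proposition \ref{prop:lbt} by induction on $D$, using Proposition \ref{spread} to pick the correct blow-up scale at each pole and Lemma \ref{blowup} to assemble the renormalisation data from the sub-poles.

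\textbf{Base case.} When $D=1$, condition (b) of the $(K,\omega,D)$-normalisation, combined with the Portmanteau theorem on the closed ball $\overline{\BB(p,1/4)}$, forces every pole $p$ of the limit measure $\mu$ to satisfy $\mu(\{p\})\le\vol(N)-\omega$. Since Proposition \ref{meat}(4) requires $\mu(\{p\})=d_p\vol(N)$ with $d_p\ge 1$, this is a contradiction, so $P=\emptyset$ and the statement is vacuous.

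\textbf{Inductive step.} Fix $p\in P$. The same argument via condition (b) and the integrality $d_p\in\BN$ gives $d_p\le D-1$. By the weak-$\ast$ convergence $\mu_{f_n}\to\mu$ together with the absolute continuity of $\mu$ away from $P$, I can choose slowly shrinking scales $\delta_n\to 0$ and concentration parameters $\epsilon_n\to 0$ such that the pre-rescaled maps
\[
\tilde g_n(x) = f_n(p+\delta_n x)\colon \BB(0,1) \to N
\]
satisfy $\Vert\mu_{\tilde g_n}\Vert\to d_p\vol(N)\ge\vol(N)$ and $\mu_{\tilde g_n}(\BB(0,\epsilon_n))\ge \Vert\mu_{\tilde g_n}\Vert-\epsilon_n$. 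Applying Proposition \ref{spread} (with $\omega$ inflated by $\epsilon_n$ to compensate for the loss in (3) of that proposition) produces affine conformal maps $\phi_n\colon \BB(0,r_n)\to\BB(0,1)$ with $r_n\to\infty$ such that $\tilde f_n = \tilde g_n\circ\phi_n$ meets conditions (a) and (b) of the normalisation definition. Extending $\sigma^p_n(x)=p+\delta_n\phi_n(x)$ arbitrarily to a diffeomorphism $\BR^d\to B^M(p,r^*_n)$ outside $\BB(0,r_n)$ produces a blow-up sequence at $p$ along which $(f_n\circ\sigma^p_n)$ is $(K,\omega,d_p)$-normalized.

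\textbf{Closing the induction and tightness.} Since $d_p\le D-1$, the induction hypothesis applied to $(f_n\circ\sigma^p_n)$ yields, after subsequence, renormalisation sequences at each of its poles; Lemma \ref{blowup} then packages these into a tight convergence of $(f_n\circ\sigma^p_n)$ to a $K$-quasiregular map $F^p\colon X^p\to N$ on a smooth pure nodal Riemannian $d$-manifold $X^p$ with main stratum $\BR^d$ and spherical bubbles, along asymptotically conformal pinching maps $\pi_n^p$. This is the first condition for a renormalisation sequence at $p$. For the second condition, I will show that no fixed quantum of mass escapes to infinity in the blown-up picture: since $\Vert\mu_{\tilde f_n}\Vert\to d_p\vol(N)$ equals $\lim\mu_{f_n}(\BB(p,\delta))$ (by our choice of $\delta_n$, up to arbitrarily small error), and condition (a) bounds the mass outside $\BB(0,1)$ by $2\omega$, an appropriate choice of compactum $Z\subset X^p$ absorbs all but $\epsilon$ of the mass, as required. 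The bridging between the blown-up and original pictures on the missing annular region is the content of Proposition \ref{lem-neck2}.

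\textbf{Main obstacle.} The principal difficulty is the calibration of $\delta_n$: too fast a decay loses the concentration needed in \eqref{mi gato se llama chichu} for Proposition \ref{spread}, while too slow a decay spoils condition (a) of the normalisation. The cascading choice of subsequences---first for the initial Proposition \ref{meat}, then to calibrate $(\delta_n,\epsilon_n)$, and again inside the induction---requires careful bookkeeping; the drift in $\omega$ across inductive layers is controlled by the small-inflation trick above, and the induction terminates because the integer invariant $d_p$ strictly decreases at each level.
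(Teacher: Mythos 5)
Your proof is correct and follows the paper's strategy closely: induction on $D$, using Proposition \ref{spread} to rescale at each pole into a $(K,\omega,d_p)$-normalized sequence, then invoking the induction hypothesis together with Lemma \ref{blowup}. The one genuine departure is your argument for $d_p<D$. The paper isolates this as Lemma \ref{lem-sillycalculation} and proves it from the degree decomposition $D=d+\sum_p d_p$ (equation \eqref{eq0lyon2}) together with the fact $\vert P\vert\ge 2$ when the limit $f$ is constant (Proposition \ref{meat}(5)). You bypass that arithmetic: condition (b) of the normalization plus the Portmanteau inequality gives $\mu(\{p\})\le\mu(\BB(p,1/4))\le\liminf_n\bigl(\mu_{f_n}(\BB(0,\Lambda_n))-\omega\bigr)=D\vol(N)-\omega$, so Proposition \ref{meat}(4) with $\mu(\{p\})=d_p\vol(N)$ forces $d_p<D$. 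This is arguably cleaner, avoiding Lemma \ref{lem-sillycalculation} and the case distinction in its proof. Two small caveats worth fixing: the Portmanteau estimate you need is $\mu(U)\le\liminf_n\mu_{f_n}(U)$ for \emph{open} $U$ (not closed balls as you wrote --- the closed-set inequality points the wrong way), and you should take a radius slightly perturbed from $1/4$ to ensure the sphere is a $\mu$-continuity set. Your verification of the second renormalisation condition (no escape of a definite quantum of mass to infinity) via the near-match between $\Vert\mu_{f_n\circ\sigma_n}\Vert$ and $\mu_{f_n}(B(p,\delta))$, with Proposition \ref{lem-neck2} on the bridging annulus, tracks what the paper does through \eqref{pekkahasafiber} and \eqref{eq-sanchitoisrunningaround}; the paper is no more detailed here than you are.
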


The first step in the proof of Proposition \ref{prop:lbt} is to apply Proposition \ref{meat} to the given sequence $(f_n)$. Given that we already have notation for the set of poles $P$ and for the limiting map $f:\BR^d\to N$, we can slightly streamline the statement of the latter:

\begin{fact}
\label{fact:lbt-1}
With notation as in Proposition \ref{prop:lbt} we have, up to possibly passing to a subsequence, that for each $p\in P$ there is an integer $d_p\in\BN$ with $d_p\ge 1$ such that the measures $\mu_{f_n}$ converge in the weak-$\ast$ topology to the measure 
\[
\mu=\mu_f+\sum_{p\in P}d_p\cdot\vol(N)\cdot\delta_p,
\]
where $\delta_p$ is the Dirac probability measure centred at $p$. Moreover, 
$$\left(D-2/3\right)\vol(N)<\mu(\BR^d)\le D\cdot\vol(N),$$
and $\vert P\vert\ge 2$ if $f$ is constant.\qed
\end{fact}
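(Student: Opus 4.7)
The plan is to deduce Fact~\ref{fact:lbt-1} directly from Proposition~\ref{meat} applied to the $(K,\omega,D)$-normalized sequence $(f_n)$. The hypotheses match essentially verbatim: the second and third bullet points in the definition of normalized are precisely conditions (a) and (b) of Proposition~\ref{meat}, and the assumption $\mu_{f_n}(\BB(0,\Lambda_n)) \to D\cdot\vol(N)$ provides the constant $C = D\cdot\vol(N)$. First I would check that the bound $\omega<\max\{\tfrac1{10}\vol(N),\tfrac18\}$ (together with whatever smallness of $\omega$ has been implicitly fixed by the context) is compatible with the hypothesis $\omega<\tfrac13\vol(N)$ required by Proposition~\ref{meat}, so that the latter may be invoked. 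By the remark following the definition of normalized sequence, the resulting pole set lies inside $\overline{\BB(0,1)}$, as stated.

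Next, applying Proposition~\ref{meat} and passing to the subsequence it provides, I would extract a weak-$\ast$ limit measure $\mu$, a finite set $P\subset\overline{\BB(0,1)}$, a $K$-quasiregular map $\tilde f\colon\BR^d\to N$ satisfying $\mu_{\tilde f}=\mu$ on $\BR^d\setminus P$, and integers $d_p\in\BN$ with $d_p\ge 1$ and $\mu(\{p\})=d_p\cdot\vol(N)$ for every $p\in P$. Since the convergence $f_n\to\tilde f$ is locally uniform on $\BR^d\setminus P$ by Proposition~\ref{meat}(3) and by hypothesis $f_n\to f$ locally uniformly on $\BR^d\setminus P$, the two limits must agree on $\BR^d\setminus P$; by continuity of the $K$-quasiregular extension, $\tilde f=f$ after identifying $f$ with its (unique) extension across the poles. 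This reconciles notation and, in particular, gives $\mu_f=\mu$ on $\BR^d\setminus P$.

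Then I would assemble the displayed formula by a Lebesgue decomposition: the measure $\mu_f$ is absolutely continuous with respect to $\vol_{\BR^d}$ (via $\det df$), so it gives no mass to the finite set $P$. Therefore
\[
\mu \;=\; \mu\vert_{\BR^d\setminus P} + \mu\vert_P \;=\; \mu_f + \sum_{p\in P} d_p\cdot\vol(N)\cdot\delta_p.
\]
The upper bound $\mu(\BR^d)\le D\cdot\vol(N)$ is Proposition~\ref{meat}(1), and the lower bound
\[
\mu(\BR^d)\ge D\cdot\vol(N)-2\omega \;>\; \bigl(D-\tfrac23\bigr)\vol(N)
\]
follows from the same item together with the bound on $\omega$. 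Finally, the statement $|P|\ge 2$ when $f$ is constant is precisely Proposition~\ref{meat}(5).

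There is no genuine obstacle here; the one point that requires a moment of care is the identification of the limit map given in the hypotheses of Proposition~\ref{prop:lbt} with the limit map produced by Proposition~\ref{meat}, so that the symbol $\mu_f$ appearing in the formula is unambiguous. Everything else is bookkeeping.
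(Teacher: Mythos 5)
Your proposal is correct and takes essentially the same route as the paper: the paper treats Fact~\ref{fact:lbt-1} as an immediate restatement of Proposition~\ref{meat} (applied with $C=D\cdot\vol(N)$), and your write-up just makes explicit the two small steps the paper leaves silent, namely that $\mu_f$ charges no points so the decomposition $\mu=\mu_f+\sum_p d_p\vol(N)\delta_p$ follows from (2) and (4), and that the limit from Proposition~\ref{meat}(3) must coincide with the $f$ already fixed in the hypotheses of Proposition~\ref{prop:lbt}. Your caution about the hypothesis $\omega<\max\{\tfrac1{10}\vol(N),\tfrac18\}$ is well placed: as written the $\max$ does not guarantee $\omega<\tfrac13\vol(N)$, and the intended reading (consistent with Proposition~\ref{spread} and with the choice of $\omega$ in the proof of Theorem~\ref{main}) is clearly $\min$, after which everything goes through.
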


Note that Fact \ref{fact:lbt-1} implies that the map $f$ has finite energy. Thus, by (QR8), there is an integer $d\ge 0$ with 
\begin{equation}
\label{eq:lbt-1}
\mu_f(\BR^d)=d\cdot\vol(N).
\end{equation}
Also, $d$ is positive unless $f$ is constant. Now, from Fact \ref{fact:lbt-1} and from \eqref{eq:lbt-1}, we get that 
$D-2/3\le d+\sum_p d_p\le D$. Since $D$, $d$ and the $d_p$'s are all integers, we get that actually
\begin{equation}\label{eq0lyon2}
d+\sum_{p\in P}d_p=D.
\end{equation}
A crucial point in the proof of Proposition \ref{prop:lbt} is that the numbers $d_p$ provided by Fact \ref{fact:lbt-1} are smaller than $D$.
 
\begin{lem}\label{lem-sillycalculation}
We have $d_p < D$ for each $p\in P$.
\end{lem}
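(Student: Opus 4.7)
The plan is to exploit the decomposition \eqref{eq0lyon2} together with the integrality of the $d_q$'s. The key observation is that \eqref{eq0lyon2} expresses $D$ as a sum of non-negative integer contributions, one from the limiting map $f$ and one from each pole. Since every pole contributes at least $1$, and since there is always ``somebody else'' contributing strictly positively (either $f$ itself, or another pole), the contribution $d_p$ at any single pole $p$ must be at most $D-1$.

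More concretely, I would split the argument into two cases according to whether the limiting map $f \colon \BR^d \setminus P \to N$ is constant or not. If $f$ is non-constant, then (QR8) applied to $f$ forces the integer $d$ in \eqref{eq:lbt-1} to satisfy $d \geq 1$, and combining this with $d_q \geq 1$ for each $q \in P$ yields
\[
d_p \;=\; D - d - \sum_{q \in P \setminus \{p\}} d_q \;\leq\; D - d \;\leq\; D - 1 \;<\; D.
\]
If instead $f$ is constant, then $d = 0$, but the last clause of Fact \ref{fact:lbt-1} (which is precisely where condition (b) in the definition of a normalized sequence is used, via claim (5) of Proposition \ref{meat}) guarantees $|P| \geq 2$. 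Picking any $q \in P \setminus \{p\}$ we then have $d_q \geq 1$, so
\[
d_p \;=\; D - \sum_{q' \in P \setminus \{p\}} d_{q'} \;\leq\; D - 1 \;<\; D,
\]
as desired.

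There is really no obstacle here: all the genuine work was done upstream, in Proposition \ref{meat} (whose condition (b) ensures that a constant limit forces at least two poles) and in (QR8) (which pins down the energy of any finite-energy quasiregular map out of $\BR^d$ to an integer multiple of $\vol(N)$). The content of the lemma is purely bookkeeping: one checks that in both the constant and non-constant cases there is always a strictly positive integer contribution to the total $D$ besides $d_p$ itself, which squeezes $d_p$ below $D$. This strict inequality is, of course, exactly what one needs to run an induction on $D$ in the proof of Proposition \ref{prop:lbt}, which is presumably why the lemma is isolated at this point.
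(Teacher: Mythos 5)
Your proof is correct and uses exactly the same ingredients as the paper's: equation \eqref{eq0lyon2}, the integrality and positivity $d_q \geq 1$ from Fact \ref{fact:lbt-1}, the integrality $d \geq 0$ from (QR8), and the fact that a constant limit forces $|P| \geq 2$. The only cosmetic difference is that you split directly on whether $f$ is constant, whereas the paper first treats $D = 1$ separately (concluding $P = \emptyset$) and then handles $D > 1$ with the same subcase split; your organization subsumes the base case and is marginally cleaner, but the argument is the same.
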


\begin{proof}
Suppose first that $D=1$. We claim that $d=1$ as well, and note that together with \eqref{eq0lyon2} this implies that $P=\emptyset$. Well, otherwise we have, again by \eqref{eq0lyon2}, that $d=0$ because all the $d_p$'s are positive. In this case the map $f$ is constant and, by the final claim of the fact, we have $\vert P\vert\ge 2$ and thus $\sum_{p\in P}d_p\ge 2$. This contradicts \eqref{eq0lyon2}. 

Suppose now that $D>1$. If $d\ge 1$ then it follows from \eqref{eq0lyon2} that $\delta_p<D$ because all the members of the sum are positive. If $d=0$ we have that $f$ is constant. We thus get from Fact \ref{fact:lbt-1} that $\vert P\vert\ge 2$ and thus that
\[
D=\sum_{p\in P}d_p \ge d_p+\vert P\vert-1 \ge d_p+1.
\]
We have proved the claim.
\end{proof}

We are now ready to prove Proposition \ref{prop:lbt}.

\begin{proof}[Proof of Proposition \ref{prop:lbt}]
The proof is by induction in the parameter $D\in \BN$. In the base case of induction, that is when $D=1$, we have nothing to prove because from Lemma \ref{lem-sillycalculation} we get that $P=\emptyset$. We might thus assume that Proposition \ref{prop:lbt} holds for all $D'\le D-1$.

Since the claim of the proposition involves all poles of the sequence $(f_n)$, but each pole individually, we might fix from now on a single $p\in P$. 

Now, for any decreasing sequence $\epsilon_i\searrow 0$ of positive numbers we have
$$\lim_{i\to\infty}\lim_{n\to\infty}\mu_{f_n}(\BB(p,\epsilon_i))=\mu(p)=d_p\cdot\vol(N).$$
Note that, while keeping the sequence $(\epsilon_n)$ fixed, we might pass to a subsequence of $(f_n)$ so that
$$\lim_{n\to\infty}\mu_{f_{n}}(\BB(p,\epsilon_n^2))=\lim_{n\to\infty}\mu_{f_{n}}(\BB(p,\epsilon_{n}))=\mu(p)=d_p\cdot\vol(N)$$
and
\[
\mu_{f_{n}}(\BB(p,\epsilon_n^2))\ge (1-\epsilon_n)\mu_{f_{n}}(\BB(p,\epsilon_{n}))
\]
for all $n\in \BN$.

Consider now the affine conformal map
\[
\psi_n:\BB(0,1)\to\BB(p,\epsilon_n),\quad x\mapsto \epsilon_n\cdot x+p,
\]
and note that, for all large enough $n$, the maps
\[
f_n\circ\psi_n:\BB(0,1)\to N
\]
satisfy the conditions of Proposition \ref{spread} for $\omega$ and $\epsilon=\epsilon_n$. It follows that there are sequences of positive numbers $\Lambda_n\to\infty$ and affine conformal embeddings
\[
\phi_n:\BB(0,\Lambda_n)\to\BB(0,1)
\]
such that the $K$-quasiregular maps
\[
(f_n\circ\psi_n)\circ\phi_n:\BB(0,\Lambda_n)\to N
\]
satisfy the following conditions:
\begin{enumerate}
\item $\mu_{f_n\circ\psi_n\circ\phi_n}(\BB(0,\Lambda_n)\setminus\BB(0,1))\le 2\omega$,
\item $\mu_{f_n\circ\psi_n\circ\phi_n}(\BB(x,\frac 14))\le \Vert\mu_{f_n\circ\psi_n}\Vert-\omega$ for all $x\in\BB(0,\Lambda_n)$, and 
\item $\Vert \mu_{f_n\circ\psi_n\circ\phi_n}\Vert\ge\Vert\mu_{f_n\circ\psi_n}\Vert-\epsilon_n$.
\end{enumerate}
For each $n\in \BN$, since the conformal map $\phi_n$ is injective, we also have the bound 
\[
\Vert \mu_{f_n\circ\psi_n\circ\phi_n}\Vert\le\Vert \mu_{f_n\circ\psi_n}\Vert=\mu_{f_n}(\BB(p,\epsilon_n)).
\]
Together with the third point above, we thus get that
\begin{equation}\label{pekkahasafiber}
\lim_{n\to \infty}\Vert \mu_{f_n\circ\psi_n\circ\phi_n}\Vert=\lim_{n\to \infty}\Vert\mu_{f_n\circ\phi_n}\Vert=\mu(p)=d_p\cdot\vol(N).
\end{equation}

We are now ready to wrap all of this together. For each $n\in \BN$, set $r_n=2\epsilon_n$ and extend the map
\[
\psi_n\circ\phi_n:\BB(0,\Lambda_n)\to\BB(p,\epsilon_n)
\]
to a diffeomorphism
\[
\sigma_n:\BR^d\to\BB(p,r_n)
\]
and note that \eqref{pekkahasafiber} implies that 
\begin{equation}\label{eq-sanchitoisrunningaround}
\begin{split}
\text{for all }\epsilon,\delta_0\text{ there are }\delta\in(0,\delta_0)\text{ and }R>0\text{ with }\\
\mu_{f_n}\left(B^M(p,\delta)\setminus \sigma_n(\BR^d\setminus\BB(0,R))\right) < \epsilon\text{ for all }n\in \BN.
\end{split}
\end{equation}

We claim that the blow-up sequence $(\sigma_n)$ is a renormalisation sequence for the sequence $(f_n)$ at $p$. We show first that the maps $f_n\circ\sigma_n$ converge tightly to a map on a nodal manifold. Note first that (1) and (2) above, together with \eqref{pekkahasafiber}, imply that the sequence of maps $(f_n\circ\sigma_n)$ is $(K,\omega,d_p)$-normalized; note that the last condition $\lim_{n\to \infty} \mu_{f_n\circ \sigma_n}(\BB(0,\Lambda_n)) = d_p \cdot\vol(N)$ follows from Proposition \ref{meat}. From Lemma \ref{lem-sillycalculation} we get that $d_p<D$. We thus have, by induction, that the sequence of maps $(f_n\circ\sigma_n)$ has a tight renormalisation sequence at each one of its poles. Having a tight renormalisation sequence at each one of its poles, 
it follows then from Lemma \ref{blowup} that there is some nodal manifold $X^p$ such that the sequence $(f_n\circ\sigma_n)$ converges tightly to a map $F^p\colon X^p\to N$.

Note that all the bubbles of $X^p$ are conformal spheres by Lemma \ref{kor-nodalRn-conformal} and thus compact. We thus get that the pinching maps provided in the proof of Lemma \ref{blowup} are isometric outside of some uniform compact set. 

The second condition in the definition of renormalisation sequence follows thus from \eqref{eq-sanchitoisrunningaround}. We have now checked that $(\sigma_n)$ is a renormalisation sequence for $(f_n)$ at $p$, as we wanted to prove.
\end{proof}

We are now ready to prove Theorem \ref{main}.

\begin{proof}[Proof of Theorem \ref{main}]
The proof of Theorem \ref{main} has many points in common with the proof of the local bubbling theorem. First note that, by Proposition \ref{meat2}, we may assume, up to passing to a subsequence, that there is a finite set $P\subset M$ with the following properties:
\begin{enumerate}
\item The measures $\mu_{f_n}$ converge in the weak-$\ast$ topology to a measure $\mu$ with total measure $\mu(M)=D\cdot\vol(N)$.
\item There is a $K$-quasiregular map $f:M\to N$ with $\mu_f=\mu$ on $M\setminus P$. 
\item The sequence $(f_n)$ converges locally uniformly to $f$ in $M\setminus P$.
\item For each $p\in P$ we have $\mu(p)=d_p\cdot\vol(N)$ for some integer $d_p\ge 1$.
\end{enumerate}
We obtain the desired nodal manifold $X$ by blowing up each one of the poles $p\in P$. The argument is almost the same as in the induction step in the proof of Proposition \ref{prop:lbt}.

\begin{claim}
For each pole $p\in P$, there exists a tight renormalizing sequence $(\sigma^p_n \colon \BR^d \to B^M(p,r^p_n))$ for $(f_n)$ such that the sequence $(f_n \circ \sigma^p_n)$ is $(K,\omega,d_p)$-normalized.
\end{claim}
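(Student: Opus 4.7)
The plan is to mimic the induction step in the proof of Proposition \ref{prop:lbt} almost verbatim, replacing the affine rescaling $\psi_n(x)=\epsilon_n x+p$ (which was available because $M=\BR^d$ there) with the exponential chart at $p$, the only genuinely new feature being that $M$ is now an abstract Riemannian manifold. Fix $p\in P$ and choose $\epsilon_n\searrow 0$ small enough that the exponential map $\exp_p\colon T_pM\to M$ is $K_n$-bilipschitz on $\BB(0,\epsilon_n)\subset T_pM$ with $K_n\to 1$.

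By the weak-$\ast$ convergence $\mu_{f_n}\to\mu$ and $\mu(\{p\})=d_p\cdot\vol(N)$, after passing to a subsequence we may assume that
\[
\lim_n\mu_{f_n}(B^M(p,\epsilon_n^2))=\lim_n\mu_{f_n}(B^M(p,\epsilon_n))=d_p\cdot\vol(N)
\]
and that $\mu_{f_n}(B^M(p,\epsilon_n^2))\ge (1-\epsilon_n)\mu_{f_n}(B^M(p,\epsilon_n))$ for every $n$. Identifying $T_pM\simeq\BR^d$, set $\psi_n\colon\BB(0,1)\to B^M(p,\epsilon_n)$, $v\mapsto\exp_p(\epsilon_n v)$. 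The map $f_n\circ\psi_n$ is then $K_nK$-quasiregular on $\BB(0,1)$, and the second inequality above translates precisely into the hypothesis \eqref{mi gato se llama chichu} of Proposition \ref{spread} with $\epsilon=\epsilon_n$. Applying that proposition, we obtain $\Lambda_n\to\infty$ and affine conformal maps $\phi_n\colon\BB(0,\Lambda_n)\to\BB(0,1)$ such that the composed map $f_n\circ\psi_n\circ\phi_n$ satisfies the three listed conclusions.

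Now extend $\psi_n\circ\phi_n\colon\BB(0,\Lambda_n)\to B^M(p,\epsilon_n)$ to a diffeomorphism $\sigma^p_n\colon\BR^d\to B^M(p,r^p_n)$ with $r^p_n=2\epsilon_n$; this will be our renormalisation sequence. Since $\sigma^p_n|_{\BB(0,\Lambda_n)}=\psi_n\circ\phi_n$ is the composition of a $K_n$-quasiconformal map with an affine conformal embedding, $(\sigma^p_n)$ is a blow-up sequence at $p$ in the sense of the given definition. The three inequalities in the definition of $(K,\omega,d_p)$-normalization for $(f_n\circ\sigma^p_n)$ are direct translations of the three conclusions of Proposition \ref{spread}, while the identity
\[
\lim_n\mu_{f_n\circ\sigma^p_n}(\BB(0,\Lambda_n))=d_p\cdot\vol(N)
\]
follows from conclusion $(3)$ of that proposition together with our choice of scales.

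Finally, to check that $(\sigma^p_n)$ is an actual renormalisation sequence we must verify the tightness bullet
\[
\mu_{f_n}\bigl(B^M(p,\delta)\setminus\sigma^p_n(\BR^d\setminus\BB(0,R))\bigr)<\epsilon,
\]
for suitably chosen $\delta\in(0,\delta_0)$ and compact set $\BB(0,R)\subset\BR^d$. This is the exact analogue of \eqref{eq-sanchitoisrunningaround} in the proof of Proposition \ref{prop:lbt} and follows from $\lim_n\|\mu_{f_n\circ\sigma^p_n}\|=d_p\vol(N)=\lim_n\mu_{f_n}(B^M(p,\delta))$ combined with the concentration estimate $\mu_{f_n}(B^M(p,\epsilon_n^2))\ge(1-\epsilon_n)\mu_{f_n}(B^M(p,\epsilon_n))$ chosen at the outset. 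The one minor obstacle relative to the proof of Proposition \ref{prop:lbt} is the $K_n\to 1$ distortion coming from the exponential chart, which is not strictly conformal; we finesse this exactly as in the blow-up of a sequence on $\BR^d$, by noting that this $K_n$ is part of the data of $\sigma^p_n$ as a blow-up sequence and is absorbed into the asymptotic conformality of the pinching maps produced through Lemma \ref{blowup} when we construct the nodal manifold $X^p$.
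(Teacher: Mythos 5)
Your proof follows the paper's own argument essentially verbatim: use the exponential chart $\psi_n(v)=\exp_p(\epsilon_n v)$ in place of the affine rescaling from Proposition \ref{prop:lbt}, note the resulting $K_n\to 1$ distortion, apply Proposition \ref{spread} to $f_n\circ\psi_n$, extend $\psi_n\circ\phi_n$ to $\sigma^p_n$, and verify the normalization and tightness conditions exactly as in the induction step of Proposition \ref{prop:lbt}. The only cosmetic deviation is that the paper invokes Proposition \ref{spread} with parameters $2\omega$ and $\epsilon=2\epsilon_n$ to leave room for the quasiconformal distortion of $\exp_p$, whereas you use $\omega$ and $\epsilon_n$ directly; this is the same bookkeeping slack already implicit in the argument.
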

\begin{proof}[Proof of the claim]
Fix $p\in P$, and choose $\omega$ much smaller than $\frac 18$, than $\frac 13\vol(N)$, and than $\inj(M)$. Let $\epsilon_n\searrow 0$ be a sequence of positive numbers, and replace the given sequence of maps $(f_n)$ by a suitable subsequence satisfying 
\[
\lim_{n\to\infty}\mu_{f_n}(B^M(p,\epsilon_n))=\lim_{n\to\infty}\mu_{f_n}(B^M(p,\epsilon_n^2))=\mu(p).
\]
We identify $\BR^d=T_pM$ via a linear isometry and consider the diffeomorphism
\[
\psi_n:\BB(0,1)\to B^M(p,\epsilon_n),\ \ 
x\mapsto \exp_p(\epsilon_n\cdot x),
\]
where $\exp_p$ is the Riemannian exponential map $T_pM \to M$.

The fact that $\exp_p$ is an infinitesimal isometry at $p$ implies that each $\psi_n$ is $K_n$-quasiconformal for some sequence $K_n\searrow 1$. Thus there exists $K'\ge 1$ for which each $f_n \circ \psi_n:\BB(0,1)\to N$ is $K'$-quasiregular for each $n$, and that the maps $f_n\circ\psi_n$ satisfy the conditions of Proposition \ref{spread} for $2\omega$ and for $\epsilon=2\epsilon_n$. At this point we may repeat word by word the argument in the proof of Proposition \ref{prop:lbt} and get that the maps $\psi_n$ extend to diffeomorphisms
\[
\sigma_n:\BR^d\to B(p,2\epsilon_n)
\]
such that the sequence $(\sigma_n)$ is a renormalisation sequence and that $(f\circ \sigma_n)$ is $(K,\omega,d_p)$-normalized. We have proved the claim.
\end{proof}

As in the end of the induction step in Proposition \ref{prop:lbt}, once we know that the sequence $(f_n)$ has a renormalisation sequence at each pole, we get from Lemma \ref{blowup} that the maps $f_n$ converge tightly to a quasiregular map $f:X\to N$ on some nodal manifold, all of whose bubbles are spheres. Tightness of the convergence, together with Lemma \ref{lem-homotopy}, implies \eqref{eq:degree}. We have proved Theorem \ref{main}.
\end{proof}

\section{The applications}
\label{sec:bubbling-applications}

Armed with Theorem \ref{main}, we explain now the proof of the other theorems mentioned in the introduction. Theorem \ref{sat-finite homotopy classes} follows directly from Theorem \ref{main} and from Lemma \ref{lem-homotopy} above:

\begin{named}{Theorem \ref{sat-finite homotopy classes}}
Let $M$ and $N$ be compact smooth oriented Riemannian $d$-manifolds for some $d\ge 2$. For every $K\ge 1$ and $D\ge 1$ there are only finitely many homotopy classes of degree $D$ maps $M\to N$ which admit a $K$-quasiregular representative.\qed
\end{named}

The proof of Theorem \ref{sat-compact} needs a bit more of a discussion.

\begin{named}{Theorem \ref{sat-compact}}
Let $M$ and $N$ be closed and oriented $d$-dimensional Riemannian manifolds for $d\ge 2$. Then the following statements are equivalent:
\begin{enumerate}
\item The space $\QR_{K,D}(M,N)$ is not compact for some $K\ge 1$ and $D\ge 1$.
\item There exists a non-constant quasiregular map $\BS^d\to N$.
\end{enumerate}
Furthermore, if $\QR_{K,D}(M,N)$ is non-compact, then the map in (2) has degree at most $D$. 
\end{named}

Recall that $\QR_{K,D}(M,N)$ is the set of all $K$-quasiregular maps $f:M\to N$ of degree $\deg(f)=D$, endowed with the topology of uniform convergence.

\begin{proof}
Suppose first that for some $K\ge 1$ and $D\ge 1$ the space $\QR_{K,D}(M,N)$ is not compact and let $f_n:M\to N$ be a sequence in that space which does not have any convergent subsequence therein. Theorem \ref{main} implies however that $(f_n)$ has a subsequence which converges to a quasiregular map 
$$f:X\to N$$
on a nodal manifold $X$ which has a stratum diffeomorphic to $M$ and where all other strata are standard spheres $\BS^n$. The fact that $(f_n)$ has no convergent subsequence in $\QR_{K,D}(M,N)$ implies that the map $f:X\to N$ is non-constant on at least one of the spherical strata of $X$. We have thus found our non-constant quasiregular map $\BS^d\to N$. The upper bound on the degree of this map follows directly from \eqref{eq:degree}.

Now, assume that there is a non-constant $K_1$-quasiregular map $\phi:\BS^d\to N$ for some $K_1\ge 1$. It is due to Alexander \cite{Alexander} that there is then also a piecewise linear, and thus $K_2$-quasiregular for some $K_2\ge 1$, branched cover $\psi:M\to\BS^d$. Identifying $\BS^d$ with $\BR^d\cup\{\infty\}$ consider the sequence of maps
\[
f_n:M\to N,\ x\mapsto \phi(n\cdot\psi(x)).
\]
This is a sequence of $K$-quasiregular maps where $K=K_1K_2$. Moreover, all of them have degree $\deg(\phi)\cdot\deg(\psi)$. Finally, this sequence is not equicontinuous, and this clearly implies that it does not have any convergent subsequence. We have proved Theorem \ref{sat-compact}.
\end{proof}

We are left with Corollary \ref{kor-bubble-rational}.

\begin{named}{Corollary \ref{kor-bubble-rational}}
Let $M$ and $N$ be closed and oriented Riemannian $d$-manifolds and suppose that $\QR_{K,D}(M,N)$ is not compact for some $K\ge 1$ and $D\ge 1$. Then $N$ is a rational homology sphere with a finite fundamental group and non-trivial $\pi_d(N)$. If moreover $d\le 4$ then $N$ is covered by $\BS^d$.
\end{named}
\begin{proof}
Since $\QR_{K,D}(M,N)$ is not compact, we obtain from Theorem \ref{sat-compact} a non-constant quasiregular map $f:\BS^d\to N$. Since $f$ is non-constant, it has non-zero energy and hence positive degree by (QR6). Now, having non-zero degree, $f$ is not homotopically trivial. We have found our non-zero element in $\pi_d(N)$. Also, being of positive degree, the map $f$ induces an injection in rational cohomology, and this implies that $N$ is a rational homology sphere. To see that $N$ has finite $\pi_1$, observe that $f:\BS^d\to N$ lifts to a positive degree map $\tilde f:\BS^d\to\tilde N$ to the universal cover of $N$. Positivity of the degree implies that $\tilde f$ is surjective and thus that $\tilde N$ is compact. This proves that $\pi_1(N)$ is finite.

We consider now what happens in small dimensions. Well, the classification theorem of surfaces implies that the sphere $\BS^2$ is the only oriented surface with finite fundamental group. In dimension $3$ we use in turn the validity of the 3-dimensional Poincar\'e conjecture \cite{Lott} to conclude that $N$, having finite fundamental group, is covered by $\BS^3$. Passing to dimension $4$ note that the argument we used above implies that also the universal cover of $N$ is dominated by $\BS^d$ and thus that it is a rational homology sphere. Now we get from Freedman's classification of simply connected 4-manifolds \cite{Freedman-book} that a simply connected rational homology sphere of dimension $4$ is homeomorphic to $\BS^4$. We are done.
\end{proof}

\begin{bem}
The argument we just gave to end the proof of Corollary \ref{kor-bubble-rational} yields a positive answer to Question \ref{quest1} for dimension $d\le 4$ but does not work for dimension $d\ge 5$. In fact, any simply connected rational homology sphere is dominated by the sphere (c.f. with 2.8 (2) in \cite{delaHarpe}) and already in dimension $5$ there are simply connected rational homology spheres which are not spheres; see \cite{Barden1965} for a classification of all simply connected 5-manifolds. 
\end{bem}

We conclude by explaining how a positive answer to Question \ref{quest1} in the introduction yields, in dimension 3, a proof of the Poincar\'e conjecture. Suppose that $X$ is a homotopy sphere. Any homotopy equivalence $\BS^3\to X$ is obviously $\pi_1$-surjective and has degree $1$. Precomposing with a self-map of $\BS^3$ it follows that there is also a $\pi_1$-surjective map $f:\BS^3\to X$ of degree $3$. Now, a result of Edmonds \cite{Edmonds-deg3} implies that $f$ is homotopic to a PL-branched cover, and thus to a quasiregular map. From a positive answer to Question \ref{quest1} one gets that $X$ is homeomorphic to $\BS^3$.

\bibliographystyle{abbrv}

\end{document}